\documentclass[12pt]{amsart}
\usepackage[margin=1in,letterpaper,portrait]{geometry}
\usepackage{ytableau}
\ytableausetup{notabloids}
\ytableausetup{centertableaux}
\usepackage{latexsym,amsmath,amssymb,verbatim, tikz}
\usepackage{graphicx}
\usepackage[colorlinks]{hyperref}
\usepackage{mathrsfs}
\usepackage{amsfonts}
\usepackage{amsthm,youngtab}
\usepackage{graphicx}
\usepackage{caption}
\usepackage{enumerate,enumitem}
\usepackage{tikz-cd}
\usepackage{rotating}
\usepackage[all]{xy}
\usepackage[titletoc, title]{appendix}
\usepackage{amscd}
\usepackage{float}
\hypersetup{
bookmarksnumbered,
pdfstartview={FitH},
breaklinks=true,
linkcolor=blue,
urlcolor=blue,
citecolor=blue,
bookmarksdepth=2
}	
\usepackage{caption}
\usepackage[utf8]{inputenc}
\usepackage[T1]{fontenc}
\usepackage{adjustbox} 
\usepackage[normalem]{ulem}   
\usepackage{xcolor}           

\theoremstyle{plain}
   
   \newtheorem{theorem}{Theorem}[section]
   \newtheorem{proposition}[theorem]{Proposition}
   
   \newtheorem{lemma}[theorem]{Lemma}
   \newtheorem{corollary}[theorem]{Corollary}

\theoremstyle{definition}

   \newtheorem{remark}[theorem]{Remark}

\numberwithin{equation}{section}

\newcommand{\ZZ}{{\mathbb {Z}}}

\newcommand{\tchi}{{\widetilde{\chi}}}
\newcommand{\td}{{\widetilde{D}}}

\DeclareMathOperator{\Hom}{Hom}

\newcommand\scalemath[2]{\scalebox{#1}{\mbox{\ensuremath{\displaystyle #2}}}}

\newlength{\mysizetiny}
\setlength{\mysizetiny}{0.3em}
\newlength{\mysizesmall}
\setlength{\mysizesmall}{0.8em}
\newlength{\mysize}
\setlength{\mysize}{1.3em}
\newlength{\mysizelarge}
\setlength{\mysizelarge}{2em}

\newcommand{\IZ}{\widehat{I}_{\mathbb{Z}}}
\newcommand{\IxiZ}{\widehat{I}_{\mathbb{Z}}^{ \leq \xi}}
\newcommand{\YZ}{\mathcal{Y}_{\mathbb{Z}}}
\newcommand{\tdxi}{\widetilde{D}_{\xi}}
\newcommand{\Ixi}{I^{\leq \xi}}
\newcommand{\barD}{\overline{D}}
\newcommand{\barDstar}{\overline{D}^{*}}

\newcommand{\CN}{\mathbb{C}[\mathbf{N}]}

\newcommand{\CQ}{\mathcal{C}_Q}
\newcommand{\CZ}{\mathcal{C}_{\mathbb{Z}}}
\newcommand{\CxiZ}{\mathcal{C}_{\mathbb{Z}}^{\leq \xi}}
\newcommand{\YxiZ}{\mathcal{Y}_{\mathbb{Z}}^{\leq \xi}}

\newcommand{\barCQ}{\overline{\mathcal{C}_Q}}

\newcommand{\xistar}{\xi^{*}}
\newcommand{\Aqn}{\mathcal{A}_q(\mathfrak{n})}
\newcommand{\DQ}{\mathcal{D}_Q}

\DeclareMathOperator*{\rep}{Rep}
\DeclareMathOperator*{\ind}{Ind}
\DeclareMathOperator*{\dimvec}{\mathbf{dim}}

\title[Auslander--Reiten combinatorics and $q$-characters]{Auslander--Reiten combinatorics and $q$-characters of representations of affine quantum groups}
\author{\'Elie Casbi and Jian-Rong Li}
\address{\'Elie Casbi: Faculty of Mathematics, University of Vienna, Oskar-Morgenstern Platz 1, 1090 Vienna, Austria.}
\email{elie.casbi@univie.ac.at}
\address{Jian-Rong Li: Faculty of Mathematics, University of Vienna, Oskar-Morgenstern Platz 1, 1090 Vienna, Austria.}
\email{jianrong.li@univie.ac.at}
\date{}

\begin{document}

 \begin{abstract}
  For each simple Lie algebra $\mathfrak{g}$ of simply-laced type, Hernandez and Leclerc introduced a certain category $\CZ$ of finite-dimensional representations of the quantum affine algebra of $\mathfrak{g}$, as well as certain subcategories $\CxiZ$ depending on a choice of height function adapted to an orientation of the Dynkin graph of $\mathfrak{g}$. 
   In our previous work we constructed an algebra homomorphism $\tdxi$ whose domain contains the image of the Grothendieck ring of $\CxiZ$ under the  truncated $q$-character morphism $\tchi_q$ corresponding to $\xi$. We exhibited a close relationship  between the composition of $\tdxi$ with $\tchi_q$ and the morphism $\barD$ recently introduced by Baumann, Kamnitzer and Knutson in their study of the equivariant homology of Mirkovi\'c--Vilonen cycles. In this paper, we extend $\tdxi$ in order to investigate its composition with Frenkel--Reshetikhin's original $q$-character morphism. Our main result consists in proving that the $q$-characters of all standard modules in $\CZ$ lie in the kernel of $\tdxi$. This provides a large family of new non-trivial rational identities suggesting possible geometric interpretations. 
 \end{abstract} 

 \maketitle

\section{Introduction}

Introduced by Drinfeld \cite{Dri86} and Jimbo \cite{Jim85} around 1985, the quantum group $U_q(\mathfrak{g})$ associated to a finite-dimensional simple Lie algebra $\mathfrak{g}$ can be viewed as a deformation of the universal enveloping algebra of $\mathfrak{g}$. 
 In the early 90's, Lusztig \cite{Lus90} constructed a remarkable basis of the quantum coordinate ring $\Aqn$ (dual of the negative half $U_q(\mathfrak{n})$ arising from a triangular decomposition of $U_q(\mathfrak{g})$) called the dual canonical basis. Kashiwara \cite{Kas91} proposed an alternative construction called the upper global basis which was shown  to coincide with Lusztig's dual canonical basis by Grojnowski--Lusztig \cite{grojnowski1993comparison} and Kashiwara--Saito \cite{kashiwara1997geometric}. Evaluating at $q=1$, the dual canonical basis descends to a basis of the ring of regular functions on the Lie group ${\bf N}$ corresponding to $\mathfrak{n}$.  Other remarkable bases of $\CN$ were discovered since then, for instance the dual semicanonical basis constructed by Lusztig \cite{lusztig2000semicanonical}, or the Mirkovi\'c--Vilonen basis arising from the geometric Satake correspondence \cite{MV00}. 
 The study of the combinatorial properties of the elements of these bases  was one of the main motivations at the origin of the foundation of cluster theory by Fomin and Zelevinsky \cite{FZ02}. 

The quantum affine algebra associated to $\mathfrak{g}$, denoted $U_q(\widehat{\mathfrak{g}})$ can be viewed as a affinization of $U_q(\mathfrak{g})$, or as quantization of the universal enveloping algebra of the Kac-Moody algebra $\widehat{\mathfrak{g}}$ associated to $\mathfrak{g}$, see \cite{CP95a}. In \cite{HL10}, Hernandez and Leclerc discovered an intimate connection between the representation theory of quantum affine algebras and cluster theory. They introduced the notion of monoidal categorification of cluster algebras, which opened the way for the study of various cluster structures using representation-theoretic tools. At the heart of this framework lies the idea that certain distinguished short exact sequences in certain well-chosen monoidal categories yield relations in their Grothendieck rings that can be identified with exchange relations associated to mutations for a suitable cluster structure.  Kang--Kashiwara--Kim--Oh \cite{KKKO18} obtained important monoidal categorification results with categories of representations of quiver Hecke algebras by constructing certain non trivial braiding operators ($R$-matrices). These operators give rise to short exact sequences corresponding to the exchange relations for the cluster structure of $\CN$.  They also provide useful (necessary) criteria for the irreducibility of products of simple modules in these categories. Other examples of monoidal categorifications have also been exhibited by Cautis--Williams \cite{CW19} via the coherent Satake category. Hernandez--Leclerc's categories are also crucially involved in the major recent works by Kang--Kashiwara--Kim--Oh-Park \cite{KKK18,KKKO18,KKOP24}. In the original work of Hernandez--Leclerc \cite{HL10}, these monoidal subcategories are constructed for a source--sink orientation. This construction was later generalized to arbitrary height functions in \cite{HL15} and \cite{kashiwara2020monoidal,KKOP24}.

   Beyond the study of cluster monomials, the categories involved in monoidal categorifications of $\CN$ also provide powerful tools to study the dual canonical bases of $\CN$ \cite{Li24} mentioned earlier, as well as dual canonical basis of Grassmannian cluster algebras \cite{CDFL}. For instance, when $\mathfrak{g}$ is of simply-laced type, Hernandez--Leclerc \cite{HL15} introduced a family $\{ \CQ \}$ of (monoidal) categories  of finite-dimensional representations of $U_q(\widehat{\mathfrak{g}})$ indexed by the set of orientations of the Dynkin graph of $\mathfrak{g}$ and showed that for each $Q$, the  complexified Grothendieck ring of $\CQ$ is isomorphic to $\CN$ in such a way that  the natural basis of classes of irreducible representations of $\CQ$ gets identified with the dual canonical basis of $\CN$. These categories $\CQ$ are all monoidal subcategories of a larger  monoidal category $\CZ$ that can be viewed as discrete version of the whole category of finite-dimensional representations of $U_q(\widehat{\mathfrak{g}})$. Frenkel--Reshetikhin's $q$-character morphism \cite[Theorem 3]{FR99} yields an injective ring homomorphism from $K_0(\CZ)$ to the ring of Laurent polynomials in variables $Y_{i,p}$ where $i$ runs over the set of vertices of $Q$ and $p \in i + 2 \mathbb{Z}$. For reasons related to cluster theory, Hernandez--Leclerc consider a slightly modified version $\tchi_q$ of $\chi_q$ called truncated $q$-character, which depends on $Q$ (more precisely on a choice of a height function adapted to $Q$).

 Lusztig \cite{lusztig1983singularities} discovered a deep connection (called the geometric  Satake correspondence) between the representation theory of the Lie group $\mathbf{G}$ of \(\mathfrak{g}\) and the geometry of the affine Grassmannian $Gr_{\mathbf{G}^{\vee}}$ of its Langlands dual $\mathbf{G}^{\vee}$. Using this result together with ideas from \cite{ginzburg1995perverse}, Mirkovi\'c and Vilonen \cite{MV00, mirkovic2007geometric} constructed a remarkable basis $\{ b_z \}$ of $\CN$, called the Mirkovi\'c--Vilonen basis (or simply MV basis), whose elements are  indexed by a family of closed irreducible subvarieties of $Gr_{\mathbf{G}^{\vee}}$ called (stable) Mirkovi\'c--Vilonen cycles. Note that the MV basis has been proved to be biperfect in the sense of \cite{BKK21} (cf. \cite[Theorem 5.2]{BKK21}). These varieties are endowed with a natural action of a maximal torus $\mathbf{T}^{\vee}$ in $\mathbf{G}^{\vee}$ and one can therefore consider the corresponding equivariant homology theory. In their recent work \cite{BKK21}, Baumann--Kamnitzer--Knutson introduced a remarkable algebraic homomorphism $\barD : \CN \longrightarrow \mathbb{C}(\mathfrak{t}^{*})$  where $\mathfrak{t}$ denotes the Lie algebra of $\mathbf{T}^{\vee}$ and proved that the evaluation of $\barD$ on an element $b_Z$ of the MV basis coincides with the equivariant multiplicity at a well-chosen fixed point of the corresponding MV cycle $Z$. The morphism $\barD$ turned out to provide a powerful tool to compare the several distinguished bases of $\CN$ mentioned above. 
   
  The motivation for the present paper stems from our desire to provide an interpretation of $\barD$ from the perspective of Hernandez--Leclerc's  monoidal categorification and investigate further the connections between the representation theory of quantum affine algebras and the geometric Satake correspondence. 
 In our previous work \cite{CasbiLi}, we constructed for each simply-laced type Lie algebra $\mathfrak{g}$ and for each height function $\xi$ on the Dynkin diagram of $\mathfrak{g}$ a morphism $\tdxi : \YxiZ \longrightarrow  \mathbb{C}(\mathfrak{t}^{*})$ where $\YxiZ$ is the subtorus of $\YZ$ generated by the $Y_{i,p}^{\pm 1}, i \in I , p \leq \xi(i)$. The definition of $\tdxi$ relies on Auslander--Reiten combinatorics and involves the coefficients of the inverse quantum Cartan matrix of $\mathfrak{g}$. The ring $\YxiZ$ contains the truncated $q$-characters of all the  representations in Hernandez--Leclerc's category $\CQ$ where $Q$ is the Dynkin quiver given by $\xi$. Our main result from \cite{CasbiLi} consists in proving that the composition of $\tdxi$ with the restriction of the truncated $q$-character morphism to $K_0(\CQ)$ coincides with Baumann--Kamnitzer--Knutson's morphism $\barD$ (more precisely its composition with Hernandez--Leclerc's isomorphism $\CN \simeq K_0(\CQ)$). In other words, $\barD$ can be viewed as the restriction to $\CN$ of $\tdxi \circ \tchi_q$. As a consequence, we were able to extend the results and prove several observations and conjectures made by the first author in \cite{Casbi3}. Moreover,  this allows to extend $\barD$ to the Grothendieck rings of larger subcategories of $\CZ$ also carrying non-trivial cluster structures (see \cite{HL16}). 

This paper is devoted to the  study of the composition of $\tdxi$ with Frenkel--Reshetikhin's original (i.e. non-truncated) $q$-character morphism. Firstly, we extend the map $\tdxi$ to the whole ring $\YZ$ thanks to a periodicity argument. Thus we can study the restriction of $\tdxi$ to the image of $K_0(\CZ)$ under $\chi_q$. A standard module is a tensor product of fundamental modules in a well-chosen order. Our main result can be stated as follows:

 \begin{theorem} \label{thm : main thm intro}
     Let $Q$ be an arbitrary Dynkin quiver and $\xi$ be a height function adapted to $Q$. Let $M$ be a non-trivial standard module in $\CZ$. Then we have that
     $$ \tdxi \left( \chi_q(M) \right) = 0. $$
\end{theorem}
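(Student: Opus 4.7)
The plan is to reduce the statement to the case of a single fundamental module via multiplicativity, and then to exploit the combinatorial structure of $\chi_q$ of a fundamental module together with the periodicity used to extend $\tdxi$ from $\YxiZ$ to the full ring $\YZ$.

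Both $\chi_q : K_0(\CZ) \to \YZ$ and the extended $\tdxi : \YZ \to \CC(\mathfrak{t}^{*})$ are $\CC$-algebra homomorphisms. A non-trivial standard module $M$ in $\CZ$ decomposes as a tensor product $M = L(Y_{i_1,p_1}) \otimes \cdots \otimes L(Y_{i_k,p_k})$ with $k \geq 1$, so
\[
\tdxi(\chi_q(M)) \;=\; \prod_{j=1}^{k} \tdxi\bigl(\chi_q(L(Y_{i_j, p_j}))\bigr).
\]
Applying this with $k = 1$ shows that the theorem is \emph{equivalent} to the vanishing $\tdxi(\chi_q(L(Y_{i,p}))) = 0$ for every fundamental module $L(Y_{i,p})$ in $\CZ$.

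For the fundamental case, I would use the explicit description of $\chi_q(L(Y_{i,p}))$ given by the Frenkel--Mukhin algorithm in simply-laced types: it is a sum of monomials in the variables $Y_{j,s}^{\pm 1}$, which naturally split into those lying in $\YxiZ$ (the monomials surviving the truncation defining $\tchi_q$) and those involving at least one $Y_{j,s}$ with $s > \xi(j)$. On the first family $\tdxi$ agrees with the map of \cite{CasbiLi}, whose composition with $\tchi_q$ realizes Baumann--Kamnitzer--Knutson's morphism $\barD$ and is generically nonzero. The theorem therefore amounts to the claim that the sum of $\tdxi$-images over the second family is exactly the opposite of that of the first.

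The crux of the proof is to establish this cancellation. The periodicity used to extend $\tdxi$ to $\YZ$ should provide a transformation rule relating $\tdxi(Y_{j,s})$ for $s > \xi(j)$ to values in $\YxiZ$, and one expects each ``above-$\xi$'' monomial of $\chi_q(L(Y_{i,p}))$ to be paired with a ``below-$\xi$'' counterpart via this rule, with a sign coming from the structure of the inverse quantum Cartan matrix. Two natural tools should help control this pairing: Hernandez's T-system for $q$-characters of fundamental modules in simply-laced types, which would allow an induction on the distance $|p - \xi(i)|$ reducing the statement to a finite set of boundary identities; and the duality $L(Y_{i,p})^{*} \cong L(Y_{i^{*}, p+h})$ (with $h$ the Coxeter number), which pairs dominant and antidominant monomials of the $q$-character and suggests that the periodicity of $\tdxi$ is precisely designed to send these pairs to opposite values. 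In either framework, the main obstacle is the explicit combinatorial identity on the inverse quantum Cartan coefficients that encodes the periodicity and drives the cancellation.
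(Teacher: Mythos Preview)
Your reduction to fundamental modules via multiplicativity is correct and is exactly how the paper begins. The gap is in the fundamental case: your proposed cancellation mechanism is aimed at the wrong target.

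You propose to split $\chi_q(L(Y_{i,p}))$ into the ``below-$\xi$'' monomials (those surviving in $\tchi_q$) and the ``above-$\xi$'' ones, and to show these two sums cancel under $\tdxi$. But for $p$ sufficiently small (concretely $p \le \xi(i)-h+2$, so that the highest monomial $Y_{i^{*},p+h}^{-1}$ of $\chi_q(L(Y_{i,p}))$ already lies in $\YxiZ$), the second family is \emph{empty}: the full $q$-character coincides with its truncation. In that regime there is nothing to cancel against, and your argument would be forced to show directly that $\tdxi(\tchi_q(L(Y_{i,p})))=0$, which is precisely the hard content you have not supplied. Your assertion that $\barD \circ \iota_Q$ is ``generically nonzero'' is true for modules in $\CQ$, but the vanishing one needs here concerns modules in the larger category $\CxiZ$, outside $\CQ$.

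The paper proceeds quite differently. It fixes a single spectral parameter $p=\xi(i)-2h+2$ (for which full $=$ truncated), and proves the vanishing there by means of a duality theorem (Theorem~\ref{thm : duality between KRs}) relating $\tdxi \circ \tchi_q$ on Kirillov--Reshetikhin modules for $Q$ to $\td_{\xi^{*}} \circ \tchi_q^{*}$ on complementary KR modules for the dual quiver $Q^{*}$; the dual side then vanishes for an elementary reason (all standard constituents have a factor beyond the $\xi^{*}$-truncation). The passage from this one $p$ to all $p$ is not by induction on $|p-\xi(i)|$ via T-systems, but by the observation that $\tdxi(A_{j,s}^{-1}) = \beta_{j,s-1}/\beta_{j,s+1}$, so shifting $p$ amounts to the linear substitution $\alpha_k \mapsto \tau^{(p'-p)/2}(\alpha_k)$, which preserves vanishing. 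Finally the passage from the specific orientation to an arbitrary $Q$ is handled by an explicit transformation rule for $\td_{s_iQ}$ versus $\td_Q$ under a source reflection. The duality theorem is proved uniformly only in classical types; in types $E_6,E_7,E_8$ the paper verifies the vanishing by computer, so a type-independent argument of the kind you are sketching would actually be stronger than what the paper achieves.

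In short: the reduction step is fine, but the heart of the proof---why $\tdxi(\chi_q(L(Y_{i,p})))$ vanishes for at least one $p$---requires a substantial new ingredient (the KR duality of Section~\ref{sec : duality}), and your sketch does not provide one.
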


Consequently, the image under $\tdxi$ of the $q$-character of an arbitrary module in $\CZ$ is always constant. Note that this constant is zero in the case of non-trivial simple modules in the subcategory $\CQ$.
The crucial ingredient for the proof of this result consists in establishing a remarkable duality between certain distinguished simple modules in $\CZ$ called Kirillov--Reshetikhin modules, when $\mathfrak{g}$ is of classical type, which we believe is of independent interest. The Kirillov--Reshetikhin modules in $\CZ$ are denoted $X_{i,p}^{(k)}$ with $i \in I$, $p \in \ZZ$ and  $k \in \ZZ_{\ge 1}$ ( see Section \ref{sec : KR modules}). We define a dual orientation $Q^{*}$ given by the height function $\xi^{*}(i) := - \xi(i^{*})$ where $i \mapsto i^{*}$ is the involution of the Dynkin diagram such that $w_0(\alpha_i) = - \alpha_{i^{*}}$. Denoting $\tchi_q^{*}$ the corresponding truncation \`a la Hernandez--Leclerc, we assume $Q$ is the monotonic orientation as chosen in  \cite[Figure 1]{CasbiLi} and show (cf. Theorem~\ref{thm : duality between KRs}) that if $i \in I, 1 \leq k < h$ and $\xi(i)-2h+2 \leq p \leq \xi(i)$, then the following holds
\begin{equation} \label{eq : duality intro}
    \tdxi \left( \tchi_q \left( X_{i,p}^{(k)} \right) \right) = \pm  \left( \psi \circ \widetilde{D}_{\xi^{*}} \right) \left( \tchi_q^{*} \left(  X_{i^{*}, -p-2h+4}^{(h-k)}  \right) \right) 
\end{equation}
where $h$ is the Coxeter number of the Weyl group of $\mathfrak{g}$, the sign on the right hand side can be made explicit in terms of Auslander--Reiten theoretic quantities \cite{AR75}, and $\psi$ denote the automorphism of $\mathbb{C}(\mathfrak{t}^{*})$ given by $\psi(\alpha_i) = \alpha_{i^{*}}$. 
We also use several classical facts from Auslander--Reiten combinatorics as key ingredients in the proof of Theorem~\ref{thm : main thm intro}. In particular, our proof may be viewed as a manifestation of some interesting compatibility between mesh relations and T-systems.

 In the case where $\mathfrak{g}$ is of type $A_n, n \geq 1$, we are able to provide an alternative proof of Theorem~\ref{thm : main thm intro} that does not rely on~\eqref{eq : duality intro}, but rather on Mukhin--Young's path model formulas \cite{MY12}. 

As explained above, the map $\tdxi$ is intimately related to the morphism $\barD$ introduced by Baumann--Kamnitzer--Knutson \cite{BKK21} in their study of the MV basis and of the equivariant homology of the affine Grassmannian. Given this tight connection with the geometric Satake correspondence, it is natural to believe that the statement of Theorem~\ref{thm : main thm intro} may admit some interesting geometric interpretation. 

The paper is organized as follows. Section~\ref{sec : AR} is devoted to some recollection on Auslander--Reiten theory. In particular we investigate the combinatorics of the dual quiver $Q^{*}$ mentioned above. In Section~\ref{sec : reminders HL and Dtilde}, we recall the main features of Hernandez--Leclerc's categorifications and we provide detailed reminders on the map $\tdxi$ defined in our previous work \cite{CasbiLi}. Section~\ref{sec : duality} is devoted to the proof of the duality~\eqref{eq : duality intro} between Kirillov--Reshetikhin modules when $\mathfrak{g}$ is of classical type. Finally, Section~\ref{sec : proof of the main result} contains the final arguments for completing the proof of our main result (Theorem~\ref{thm : main thm intro}). 

\subsection*{Acknowledgments}
The research of E.C. was supported by the RTG NSF grant DMS-1645877.
JR Li is supported by the Austrian Science Fund (FWF): P-34602, Grant DOI: 10.55776/P34602, and PAT 9039323, Grant-DOI 10.55776/PAT9039323, and the National Natural Science Foundation of China (No. 12471023). 

\section{Auslander--Reiten combinatorics}
  \label{sec : AR}

   \subsection{Brief recollection on Auslander--Reiten theory}
    \label{sec : reminders on AR}
Auslander--Reiten theory was introduced by Auslander and Reiten in \cite{AR75}. Let $Q$ be a Dynkin quiver, i.e. an orientation of the Dynkin diagram of a Lie algebra $\mathfrak{g}$ of type $A_n (n \geq 1) , D_n (n \geq 4)$ or $E_n , n=6,7,8$. We use the notation $i \sim j$ to denote that $i$ and $j$ are connected 
by an edge in the underlying graph of $Q$. Let $I$ denote the set of vertices of $Q$ and $n:= \sharp I$. We will denote by $\rep Q$ the category of finite-dimensional representations of $Q$ over $\mathbb{C}$, which we will identify with the category of finite-dimensional right modules over the path algebra $\mathbb{C}Q$ of $Q$. Furthermore we will also denote by $\Delta$ (resp. $\Delta_+$) the set of roots (resp. positive roots) associated to $\mathfrak{g}$, and by $W$ the corresponding Weyl group. 

By Gabriel's theorem \cite{Gab72}, there is a bijection between the set $\Delta_+$ of positive roots of the root system associated with $Q$ and the set of isomorphism classes of indecomposable objects in $\rep Q$. 
Moreover, the dimension vectors of objects in $\rep Q$ can be identified with elements of the positive root cone $\bigoplus_{i \in I} \mathbb{Z}_{\geq 0} \alpha_i$. For each $\beta \in \Delta_+$, we denote by $M_\beta$ the corresponding indecomposable representation. In particular, its dimension vector satisfies $\dim M_\beta = \beta$.

For each $j \in I$, we will respectively denote by $I_j$ (resp. $P_j$) the indecomposable injective (resp. projective) representation of $Q$, whose dimension vectors are given by 
   $$ \dimvec I_j = \sum_{k \rightsquigarrow j} \alpha_k,  \qquad \dimvec P_j = \sum_{l \rightsquigarrow k} \alpha_k, $$
   where the notation $k \rightsquigarrow j$ means that there exists an oriented path in $Q$ starting at $k$ and ending at $j$ (\(k \rightsquigarrow j\) includes the stationary path at vertex \(j\)). 

A reduced expression $\mathbf{i}=(i_1,\dots,i_N)$ of $w_0$ is said to be adapted to $Q$ if for each $k=1,\dots,N$, the vertex $i_k$ is a sink 
in the quiver $Q_{k-1}$ defined inductively by $Q_0=Q$ and $Q_k$ obtained 
from $Q_{k-1}$ by reversing all arrows incident to $i_k$. 
The Coxeter transform $c_Q \in W$ associated to $Q$ is then given by $c_Q := s_{i_1}\cdots s_{i_n}$. We also denote by $h$ the corresponding Coxeter number, i.e. the order of $c_Q$ in $W$ (recall that this number is the same for all Coxeter elements in $W$). 
A height function adapted to $Q$ is a function $\xi : I \longrightarrow \mathbb{Z}$ such that $\xi(j) = \xi(i)-1$ whenever there is an arrow from $i$ to $j$
 in $Q$.

    Let $\DQ := \mathcal{D}^b(\rep Q)$ denote the bounded derived category of $\rep Q$ and $\ind \DQ$ denote the set of indecomposable objects of $\DQ$. The elements of $\ind \DQ$ are of the form $M[m]$ where $M$ is an indecomposable object in $\rep Q$ and $m \in \mathbb{Z}$, so that $\ind \DQ$ is in bijection with $\Delta_+ \times \mathbb{Z}$. We will then set $ \dimvec M[m] := (-1)^m \dimvec M$ for any indecomposable object $M$ in $\rep Q$ and any $m \in \mathbb{Z}$, so that $\dimvec N \in \Delta$ for any object $N \in \ind \DQ$.  
    We denote by $\langle \cdot  , \cdot  \rangle_Q$ the Euler-Ringel form of $Q$, which is the bilinear form given by 
    $$  \forall M,N \in \rep Q, \enspace \langle \dimvec M , \dimvec N \rangle_Q := \dim \Hom_{\rep Q}(M,N) - \dim \mathrm{Ext}^1_{\rep Q}(M,N) . $$
    Using this, it is easy to compute the dimensions of morphism spaces in $\DQ$: namely, if $M$ and $M'$ are indecomposable objects in $\rep Q$ and $m,m' \in \mathbb{Z}$, then 
     \begin{equation} \label{eq : dim of Homs in DbRepQ}
    \dim \mathrm{Hom}_{\DQ}(M[m],M'[m']) =
    \begin{cases}
        (\langle \boldsymbol{\dim}M, \boldsymbol{\dim}M' \rangle )_+& \text{if $m=m'$,} \\
        (\langle \boldsymbol{\dim}M, \boldsymbol{\dim}M' \rangle)_- & \text{if $m' = m+1$,} \\
        0 & \text{otherwise,}
    \end{cases}
    \end{equation}
where $a_+ := \max(a,0)$ and $a_- := - \min(a,0)$ for any integer $a$. 
We denote $i+2\ZZ = \{i+2r: r \in \ZZ\}$ for $i \in I = [n]$, where $n$ is the rank of $\mathfrak{g}$. Let $\IZ$ denote the set of pairs
$$ \IZ := \{ (i,p) , i \in I , p \in i + 2 \mathbb{Z} \}  $$
and let us choose a height function $\xi$ adapted to $Q$. 
Such a choice yields a natural bijection 
\begin{equation} \label{eq : bijection}
(i,p) \in \IZ \longmapsto M_{i,p} \in \ind \DQ 
\end{equation}
given by 
$$ M_{j, \xi(j)} :=  I_j, \enspace j \in I, \qquad M_{i,p \pm 2} := \tau_Q^{ \mp 1} M_{i,p} ,  \enspace (i,p) \in \IZ  $$
where $\tau_Q$ stands for the Auslander-Reiten translate in $\DQ$. Note that one has 
 \begin{equation} \label{eq : dim Hom = 0}
 p>s \enspace  \Rightarrow  \enspace  \dim \mathrm{Hom}_{\DQ}(M_{i,p},M_{j,s}) = 0  
 \end{equation}
for any $(i,p), (j,s) \in \IZ$.
 We then set
$$ \beta_{i,p} := \dimvec M_{i,p} \in \Delta $$
for every $(i,p) \in \IZ$. 
Using standard notations, we will denote by $i \mapsto i^*$ the involution of $I$ given by
\begin{equation} \label{eq : def of istar}
 \alpha_{i^{*}} := - w_0 (\alpha_i). 
 \end{equation}

 
 For each $i \in I$ we denote by $d_i$ the following positive number
 \begin{equation} \label{eq : def of di}
 d_i := \sum_{(j,s) \in \IZ} \dim \Hom_{\DQ} (M_{i, \xi(i)} , M_{j,s}) .
 \end{equation}
 Note that this is well-defined as the right hand side contains only finitely many non zero terms in view of~\eqref{eq : dim of Homs in DbRepQ}. 
 For later purposes, it will be convenient to extend this to all non-negative integers by setting $d_i := 1$ if $i=0$ or $i>n$. Note that for each $i \in I$ we have that 
 $$ \forall p \in i + 2 \mathbb{Z}, \enspace \sum_{(j,s) \in \IZ} \dim \Hom_{\DQ}  (M_{i,p} , M_{j,s}) = d_i  $$
 because $\dim \Hom_{\DQ}(M_{i,p},M_{j,s}) = \dim \Hom_{\DQ}(M_{i,\xi(i)},M_{j,s-p+\xi(i)}) $ for each $(j,s) \in \IZ$. 
 
We then have the following:
  \begin{lemma} \label{lem : cone of morphisms}
 Let $i \in I$ and $p \in i + 2 \mathbb{Z}$. Let $ \mathcal{C}_{i,p} :=\{ (j,s) \in \IZ, p \leq s < p+h \ \} $.
 Then we have that 
 $$ d_i = \sum_{(j,s) \in \mathcal{C}_{i,p}} \langle \beta_{i,p} , \beta_{j,s} \rangle_Q . $$
  \end{lemma}

 \begin{proof}
Let $(i,p) \in \IZ$ and  $(j,s) \in \IZ$ such that $\dim \Hom_{\DQ}(M_{i,p},M_{j,s}) \neq 0$. Then by~\eqref{eq : dim Hom = 0} we have  $s \geq p$. We claim that  $s<p+h$ as well. Indeed, recall that    $S := \tau [1]$ is a Serre functor in $\DQ$ (in the sense of \cite{BonOr01}), i.e. we have bifunctorial isomorphisms 
$$ \Hom_{\DQ}(M,N) \simeq D \Hom_{\DQ}(N,SM) $$
for every objects $M,N$ in $\DQ$ (here $D$ denotes the usual duality of $\mathbb{C}$-vector spaces). This essentially follows from the Auslander-Reiten formula  \cite{AR75}.  Furthermore it is known  (cf. for example [\cite{fujita2021q}, Corollary 2.40]) that $ M_{i,p}[1] \simeq M_{i^*,p+h}$. Thus we have 
$$\dim \Hom_{\DQ}(M_{i,p}, M_{j,s}) = \dim \Hom_{\DQ}(M_{j^* , s-h+2} , M_{i,p})$$
which would be zero if $s>p+h$ in view of~\eqref{eq : dim Hom = 0}. Thus the claim holds, so that $d_i$ can be rewritten as 
$$ d_i = \sum_{(j,s) \in \mathcal{C}_{i,p}} \dim \mathrm{Hom}_{\DQ}(M_{i,p},M_{j,s}) .  $$
In addition to that, the inequalities $p \leq s < p+h$ imply that  $\dim \Hom_{\DQ}(M_{i,p} , M_{j,s}[m]) = 0$ for any non zero integer $m$ and thus $\dim \Hom_{\DQ}(M_{i,p} , M_{j,s}) = \langle \beta_{i,p} , \beta_{j,s} \rangle_Q$ in view of~\eqref{eq : dim of Homs in DbRepQ}, which proves the lemma. 


 \end{proof}

 \begin{corollary} \label{coro di = distar}
     For each $i \in I$  we have that $d_i = d_{i^*}$. 
 \end{corollary}

  \begin{proof}
      We have
\begin{align*}  
    d_i &= \sum_{(j,s) \in \mathcal{C}_{i,p}}  \langle \beta_{i,p} , \beta_{j,s} \rangle_Q = \sum_{(j,s) \in \mathcal{C}_{i,p}} \langle - \beta_{i^* , p+h} , \beta_{j,s} \rangle_Q \\
      &= \sum_{(j,s) \in \mathcal{C}_{i^*,p+h}} \langle - \beta_{i^*,p+h} , \beta_{j^*s-h} \rangle_Q = \sum_{(j,s) \in \mathcal{C}_{i^*,p+h}} \langle \beta_{i^*,p+h} , \beta_{j,s} \rangle_Q = d_{i^*}.  
\end{align*}  
  \end{proof}

 \subsection{Dual Auslander--Reiten theory}

  \label{sec : dual AR}
 
 Fix as above an orientation $Q$ of the Dynkin diagram of $\mathfrak{g}$ and choose a height function $\xi$ adapted to $Q$. We denote by $Q^{*}$ the Dynkin quiver adapted to the height function $\xi^{*}$ given by 
\begin{equation} \label{eq : def of xistar}
 \forall i \in I , \qquad  \xi^{*}(i) := - \xi(i^{*})  
 \end{equation}
 where $i^{*}$ is defined by~\eqref{eq : def of istar}.
For every $i \in I$, we will denote by $I_i^{*}$ (resp. $P_i^{*}$) the indecomposable injective (resp. projective) representation of $Q^{*}$ associated to $i$. We will also denote by $\beta_{i,p}^{*}$ the corresponding root associated to the vertex $(i,p)$ for every $(i,p) \in \IZ$, namely 
  $$ \beta_{i,p}^{*} := \tau_{Q^{*}}^{(\xi^{*}(i)-p)/2} \left(  \dimvec I_i^{*} \right). $$

 \begin{lemma} \label{lem : Euler forms}
 Let $Q$ be a Dynkin quiver. Then  we have that 
 $$ \forall \beta , \gamma \in  \Delta, \qquad  \langle  \beta , \gamma \rangle_Q = \langle  w_0 \gamma , w_0 \beta  \rangle_{Q^{*}} .  $$
  \end{lemma}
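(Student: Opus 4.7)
My plan is to reduce to the case of simple roots by bilinearity, and then unwind the definitions of $Q^{*}$ and $w_0$ to match the two sides.

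First I would establish the key combinatorial compatibility between $Q$ and $Q^{*}$: there is an arrow $i \to j$ in $Q^{*}$ if and only if there is an arrow $j^{*} \to i^{*}$ in $Q$. Indeed, an arrow $i \to j$ in $Q^{*}$ is characterized by $\xi^{*}(i) - \xi^{*}(j) = 1$, which via $\xi^{*}(k) = -\xi(k^{*})$ rewrites as $\xi(j^{*}) - \xi(i^{*}) = 1$; since $i \mapsto i^{*}$ is an automorphism of the underlying Dynkin diagram (hence preserves adjacency), this is exactly the condition that $j^{*} \to i^{*}$ in $Q$.

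Next, by bilinearity of both Euler forms in both arguments, it suffices to verify the identity for $\beta = \alpha_i$ and $\gamma = \alpha_j$. Using the standard formula $\langle \alpha_i, \alpha_j \rangle_Q = \delta_{i,j} - |\{i \to j \text{ in } Q\}|$ together with $w_0(\alpha_k) = -\alpha_{k^{*}}$ and the resulting sign cancellation, the right-hand side unfolds as
\[
\langle w_0 \alpha_j, w_0 \alpha_i \rangle_{Q^{*}} = \langle \alpha_{j^{*}}, \alpha_{i^{*}} \rangle_{Q^{*}} = \delta_{i^{*},j^{*}} - |\{j^{*} \to i^{*} \text{ in } Q^{*}\}|,
\]
which by the first step collapses to $\delta_{i,j} - |\{i \to j \text{ in } Q\}| = \langle \alpha_i, \alpha_j \rangle_Q$, as desired.

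The only subtlety (and hence the main obstacle, modest as it is) lies in carefully tracking the interaction between the sign flip and the involution $*$ hidden in the definition $\xi^{*}(i) = -\xi(i^{*})$: the sign flip reverses arrow direction while $*$ permutes vertices, and both must be kept in sync with the order swap between $\beta, \gamma$ on the left and $w_0\gamma, w_0\beta$ on the right. Once this bookkeeping is done, the statement becomes a transparent reflection of the compatibility between the Euler form and the action of $w_0$ via the duality $Q \leftrightarrow Q^{*}$.
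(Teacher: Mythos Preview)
Your proof is correct and follows essentially the same approach as the paper: reduce to simple roots by bilinearity, use $w_0(\alpha_k)=-\alpha_{k^{*}}$ to rewrite the right-hand side as $\langle \alpha_{j^{*}},\alpha_{i^{*}}\rangle_{Q^{*}}$, and then invoke the arrow correspondence $\sharp\{k\to l\}_{Q^{*}}=\sharp\{l^{*}\to k^{*}\}_{Q}$ to match $\delta_{i,j}-\sharp\{i\to j\}_{Q}$. The only cosmetic difference is that you establish the arrow correspondence up front, whereas the paper records it in the middle of the computation.
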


\begin{proof}
Recall that \(\alpha_{i}\) is the dimension vector of the simple representation \(S_{i}\).
As everything is bilinear, it suffices to prove the statement in the case where both $\beta$ and $\gamma$ are simple roots. Hence we denote $\beta := \alpha_i$ and $\gamma := \alpha_j$ with $i,j \in I$. Then we have that $-w_0(\beta) = \alpha_{i^{*}}$ and $-w_0(\gamma)=\alpha_{j^{*}}$. Thus we have 
$$  \langle  w_0 \gamma , w_0 \beta  \rangle_{Q^{*}}  = \langle  \alpha_{j^{*}} , \alpha_{i^{*}} \rangle_{Q^{*}}  = \delta_{i^{*} , j^{*}} -  \sharp \{ j^{*} \rightarrow i^{*} \}_{Q^{*}}  . $$
But by definition of $Q^{*}$, we have that 
$$  \sharp \{  k \rightarrow l \}_{Q^{*}}  =  \sharp \{  l^{*} \rightarrow k^{*} \}_Q . $$
Therefore we obtain 
$$  \langle  w_0 \gamma , w_0 \beta  \rangle_{Q^{*}}  = \delta_{i,j} -  \sharp \{ i \rightarrow j \}_{Q}   = \langle \alpha_i , \alpha_j \rangle_Q = \langle \beta , \gamma \rangle_Q  $$
which proves the lemma.
\end{proof}

\begin{lemma}
  For any Dynkin quiver $Q$, we have that $w_0 c_Q w_0 = c_{Q^{*}}^{-1}$. 
\end{lemma}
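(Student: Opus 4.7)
The plan is to use the characterization of the Coxeter transform $\tau_Q$ via the Auslander-Reiten formula at the root-lattice level, namely
$$\langle \beta, \gamma \rangle_Q = -\langle \gamma, \tau_Q \beta \rangle_Q$$
for all $\beta, \gamma$ in the root lattice. This identity is the root-lattice shadow of Serre duality $\mathrm{Ext}^1(M,N) \cong D\,\Hom(N, \tau M)$ in the hereditary category $\mathrm{Rep}\,Q$, combined with $\langle [M], [N] \rangle_Q = \dim \Hom(M,N) - \dim \mathrm{Ext}^1(M,N)$, and it extends bilinearly from classes of indecomposables to the whole lattice.

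Armed with this identity for both $Q$ and $Q^{*}$, and combined with the previous Lemma~\ref{lem : Euler forms}, the claim will follow from a short chain of substitutions. First I would start from $\langle w_0 \gamma, w_0 \beta \rangle_{Q^{*}} = \langle \beta, \gamma \rangle_Q$ (Lemma~\ref{lem : Euler forms}) and rewrite the right-hand side via the AR formula for $Q$ as $-\langle \gamma, \tau_Q \beta \rangle_Q$. Transporting this back to $Q^{*}$ via Lemma~\ref{lem : Euler forms} produces
$$\langle w_0 \gamma, w_0 \beta \rangle_{Q^{*}} = -\langle w_0 \tau_Q \beta, w_0 \gamma \rangle_{Q^{*}}.$$
Applying the AR formula for $Q^{*}$ to the right-hand side (with $\alpha = w_0 \tau_Q \beta$ and $\delta = w_0 \gamma$) then gives $\langle w_0 \gamma, \tau_{Q^{*}} w_0 \tau_Q \beta \rangle_{Q^{*}}$. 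Since $\gamma$ (hence $w_0\gamma$) ranges over the whole root lattice, non-degeneracy of the Euler form in the second variable yields $w_0 \beta = \tau_{Q^{*}} w_0 \tau_Q \beta$ for all $\beta$. Because $w_0^2 = \mathrm{id}$, this is precisely the desired identity $w_0 \tau_Q w_0 = \tau_{Q^{*}}^{-1}$.

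There is no substantial obstacle here; the only point that deserves care is the claim that the AR formula $\langle \beta, \gamma \rangle_Q = -\langle \gamma, \tau_Q \beta \rangle_Q$ is valid at the root-lattice level (and not merely for dimension vectors of non-projective indecomposables). This can be checked on the simple-root basis using the matrix identity $\tau_Q = -C_Q^{-T} C_Q$, where $C_Q$ denotes the matrix of the Euler form in the basis of simple roots, and then extended by bilinearity. As an alternative route, one could bypass the Euler form entirely by expressing $\tau_Q$ and $\tau_{Q^{*}}$ as Coxeter elements of $W$ and invoking the elementary identity $w_0 s_i w_0 = s_{i^{*}}$ together with a comparison of admissible orderings on $Q$ and $Q^{*}$; however, in view of Lemma~\ref{lem : Euler forms} already being in hand, the Euler-form route seems cleanest.
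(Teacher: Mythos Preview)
Your proof is correct, but it takes a genuinely different route from the paper's. The paper argues directly at the level of Coxeter elements: it writes $\tau_Q = s_{i_1} \cdots s_{i_n}$ for an admissible ordering $(i_1,\ldots,i_n)$ of the vertices of $Q$, conjugates by $w_0$ using $w_0 s_i w_0 = s_{i^{*}}$ to obtain $w_0 \tau_Q w_0 = s_{i_1^{*}} \cdots s_{i_n^{*}}$, and then checks via the height-function relation $\xi^{*}(i^{*}) = -\xi(i)$ that $(i_n^{*},\ldots,i_1^{*})$ is an admissible ordering for $Q^{*}$, so that $\tau_{Q^{*}} = s_{i_n^{*}} \cdots s_{i_1^{*}}$ and hence $\tau_{Q^{*}}^{-1} = s_{i_1^{*}} \cdots s_{i_n^{*}}$. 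In other words, the paper follows precisely the alternative you sketch in your final paragraph. Your primary argument, by contrast, derives the identity from Lemma~\ref{lem : Euler forms} together with the Auslander--Reiten (or Coxeter) identity $\langle \beta,\gamma \rangle_Q = -\langle \gamma,\tau_Q\beta \rangle_Q$ and non-degeneracy of the Euler form. This is a cleaner and more conceptual use of the Euler-form lemma already in hand, and it avoids any explicit bookkeeping with admissible orderings; the paper's route, on the other hand, is entirely elementary and does not require the matrix identity $\tau_Q = -C_Q^{-1}C_Q^{T}$ to be invoked.
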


\begin{proof}
 Fix a reduced expression $\mathbf{i} = (i_1, \ldots , i_N)$ adapted to $Q$, so that a reduced expression for $c_Q$ is given by $(i_1, \ldots , i_n)$ (recall that $n=\sharp I$). 
 As~\eqref{eq : def of istar} implies that $w_0 s_i w_0 = s_{i^{*}}$ for any $i \in I$, we get that  $w_0 c_Q w_0 = s_{i_1^{*}} \cdots s_{i_n^{*}}$. Now, if there is an arrow from $i$ to $j$ in $Q$, then $i$ appears before $j$ in the word $(i_1, \ldots , i_n)$. On the other hand, we have that 
 $$ \xi^{*}(i^{*}) = - \xi(i) = - \xi(j)-1 = \xi^{*}(j^{*})-1 $$
 so there is an arrow from $j^{*}$ to $i^{*}$ in $Q^{*}$, and thus $i^{*}$ has to appear after $j^{*}$ in a reduced expression for $c_{Q^{*}}$. This shows that $ c_{Q^{*}} = s_{i_n^{*}} \cdots s_{i_1^{*}}$ and hence by what precedes $c_{Q^{*}}^{-1} = w_0 c_Q w_0$.
\end{proof}

 \begin{lemma} \label{lem : duality on roots}
 Let $j \in I$ and $s \in j + 2 \mathbb{Z}$. Then we have that 
 $$ w_0 \beta_{j,s} = - \beta_{j, -s-h+2}^{*}. $$
 \end{lemma}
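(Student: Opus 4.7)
The plan is to unwind the definition of $\beta_{j,s}$ and use the preceding two lemmas in sequence, together with a standard Serre functor identity, to identify $w_0 \beta_{j,s}$ with $-\beta^*_{j, -s-h+2}$. The computation is essentially a careful bookkeeping of indices, and no new idea beyond the preceding material is required.

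First, starting from $\beta_{j,s} = \tau_Q^{(\xi(j)-s)/2} \boldsymbol{\dim} I_j$ and applying $w_0$, the previous lemma $w_0 \tau_Q w_0 = \tau_{Q^*}^{-1}$ gives
$$ w_0 \beta_{j,s} = \tau_{Q^*}^{(s-\xi(j))/2} \, w_0 \boldsymbol{\dim} I_j. $$
To compute $w_0 \boldsymbol{\dim} I_j$, I would expand $\boldsymbol{\dim} I_j = \sum_{k \rightsquigarrow_Q j} \alpha_k$, apply $w_0(\alpha_k) = -\alpha_{k^*}$, and re-index via the bijection between paths in $Q$ and $Q^*$ (reversal composed with the involution $*$) which is precisely the bijection of arrows used in the proof of the preceding lemma. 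This yields $w_0 \boldsymbol{\dim} I_j = -\boldsymbol{\dim} P^*_{j^*}$, since
$$ \boldsymbol{\dim} P^*_{j^*} \;=\; \sum_{j^* \rightsquigarrow_{Q^*} l} \alpha_l \;=\; \sum_{k \rightsquigarrow_{Q} j} \alpha_{k^*}. $$

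The final step is to re-express $\boldsymbol{\dim} P^*_{j^*}$ in the form $\beta^*_{j, p}$ for the appropriate $p$. For this I would invoke the Nakayama/Serre functor identity $S^{-1} I^*_i = P^*_i$ (dual to $S(P^*_i) = I^*_i$) combined with the relation $S^{-1} M^*_{i,p} = M^*_{i^*, p-h+2}$ already recalled in the proof of Lemma~\ref{lem : cone of morphisms}, applied now in $Q^*$. Setting $p = \xi^*(i)$ gives $P^*_i = M^*_{i^*, \xi^*(i) - h + 2}$, hence substituting $i = j^*$ and using $\xi^*(j^*) = -\xi(j)$ yields $\boldsymbol{\dim} P^*_{j^*} = \beta^*_{j, -\xi(j) - h + 2}$. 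Plugging this into the previous display and using the defining recursion $\tau_{Q^*} \beta^*_{j, p} = \beta^*_{j, p-2}$, I get
$$ w_0 \beta_{j,s} \;=\; -\tau_{Q^*}^{(s-\xi(j))/2} \beta^*_{j, -\xi(j) - h + 2} \;=\; -\beta^*_{j, -s - h + 2}, $$
which is the desired identity. The only real pitfall is sign tracking through $w_0$, $*$, and the Serre shift, but no substantive obstacle beyond those already handled in the preceding lemmas is anticipated.
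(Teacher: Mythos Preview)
Your proof is correct and follows essentially the same route as the paper's: both compute $w_0 \boldsymbol{\dim} I_j = -\boldsymbol{\dim} P^*_{j^*}$ via the path bijection, identify $\boldsymbol{\dim} P^*_{j^*} = \beta^*_{j,-\xi(j)-h+2}$, and then propagate to arbitrary $s$ using the relation $w_0 \tau_Q w_0 = \tau_{Q^*}^{-1}$. The only cosmetic differences are the order in which you apply $\tau$ versus handle the base case, and that you justify the position of $P^*_{j^*}$ explicitly via the Serre functor whereas the paper simply cites it as a standard fact.
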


  \begin{proof}
 Let us fix a height function $\xi$ adapted to $Q$. We begin by checking the desired identity in the case  where $s= \xi(j)$, for each $j \in I$. Then we have 
 $$ \beta_{j , \xi(j)} = \dimvec I_j = \sum_{k \rightsquigarrow j} \alpha_k. $$
As explained in the proof of the previous lemma, there is an oriented path from $k$ to $j$ in $Q$ if and only if there is an oriented path from $j^{*}$ to $k^{*}$ in $Q^{*}$. Therefore, we have 
 \begin{align*}
 w_0 \beta_{j,\xi(j)} &= - \sum_{ \text{$k \rightsquigarrow j$ in $Q$}} \alpha_{k^{*}}
 = -  \sum_{ \text{$j^{*} \rightsquigarrow k^{*}$ in $Q^{*}$}} \alpha_{k^{*}} = -  \sum_{ \text{$j^{*} \rightsquigarrow k$ in $Q^{*}$}} \alpha_k = - \dimvec P_{j^{*}}^{*}
 \end{align*}
 where $P_{j^{*}}^{*}$ denotes the indecomposable projective representation of $Q^{*}$ associated to $j^{*}$. By the reminders above, we have that $\dimvec P_{j^{*}}^{*} = \beta_{j, \xi^{*}(j^{*})-h+2}^{*}$. Hence we obtain
 \begin{align*}
 w_0 \beta_{j , \xi(j)} = - \beta_{j, \xi^{*}(j^{*})-h+2}^{*} = - \beta_{j, - \xi(j)-h+2}^{*}
 \end{align*}
 which is the desired statement for $s = \xi(j)$. Fix now $s$ arbitrary. Then by definition we have that $\beta_{j,s} = c_Q^{(\xi(j)-s)/2}(\beta_{j, \xi(j)})$, and hence using the previous lemma, we get
 $$ w_0 \beta_{j,s} = w_0 c_Q^{(\xi(j)-s)/2}(\beta_{j, \xi(j)}) = c_{Q^{*}}^{(s-\xi(j))/2}( w_0 \beta_{j, \xi(j)}) = - c_{Q^{*}}^{(s-\xi(j))/2}(\beta_{j,-\xi(j)-h+2}^{*}) = - \beta_{j, -s-h+2}^{*}
 $$
 which finishes the proof of the lemma. 
 \end{proof}

\subsection{Remark on dual orientations}
 \label{sec : rk on dual orientations}

 We end this section by proving Lemma~\ref{lem : reduced word adapted to Qstar} below. Although we will never use it in the rest of the paper, it sheds light on the motivation for introducing the orientation $Q^{*}$ and therefore we provide a detailed proof for the reader's convenience. First recall the following classical fact. 
  
 \begin{lemma} \label{lem : number of occ}
 Let $\mathbf{i}$ be a reduced word for $w_0$ adapted to an orientation $Q$ of the Dynkin diagram of $\mathfrak{g}$. Then for every $i$, denoting by $n(i)$ the number of occurrences of $i$ in $\mathbf{i}$, we have
  $$ n(i) = \frac{\xi(i)-\xi(i^{*})+h}{2} .  $$
  Consequently, we have that $n(i) + n(i^{*}) = h$ for every $i \in I$.
 \end{lemma}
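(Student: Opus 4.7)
The plan is to realize $n(i)$ as the length of the $\tau_Q$-orbit of the indecomposable injective $I_i$ inside $\mathrm{Rep}Q$, and then to locate the two endpoints of this orbit in the $(i,p)$-coordinates of the bijection~\eqref{eq : bijection}.

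First I would invoke the standard AR-theoretic interpretation of adapted reduced words: when $\mathbf{i} = (i_1, \ldots, i_N)$ is adapted to $Q$, the indices $k$ with $i_k = i$ are in order-preserving bijection with the $\tau_Q$-orbit of $I_i$ in $\mathrm{Rep}Q$. The first such index $k$ corresponds to $I_i$ itself, successive indices correspond to $\tau_Q I_i, \tau_Q^2 I_i, \ldots$, and the last one corresponds to the unique indecomposable projective $P$ lying in this orbit, past which $\tau_Q$ becomes undefined in $\mathrm{Rep}Q$. Hence $n(i)$ is exactly the cardinality of this orbit.

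Next I would translate the orbit into the coordinates of~\eqref{eq : bijection}. Since $I_i = M_{i,\xi(i)}$ and $\tau_Q M_{i,p} = M_{i,p-2}$, the orbit is a chain
\[
I_i = M_{i,\xi(i)},\ M_{i,\xi(i)-2},\ \ldots,\ M_{i,p_{\min}},
\]
where $M_{i,p_{\min}}$ is the terminal indecomposable projective. The crucial step is to identify $p_{\min}$. For this I would use the Serre functor $S = \tau_Q[1]$ of $\DbRepQ$, which restricts on the heart to the Nakayama functor and in particular satisfies $S(P_j) = I_j$ for every $j \in I$. Inverting gives $P_j = S^{-1}(I_j)$, and the identity $S^{-1}M_{j,s} = M_{j^*, s-h+2}$ already recalled inside the proof of Lemma~\ref{lem : cone of morphisms} yields
\[
P_j = S^{-1}\bigl( M_{j,\xi(j)} \bigr) = M_{j^*, \xi(j) - h + 2}.
\]
Substituting $j = i^*$ (and using $(i^*)^* = i$) gives $P_{i^*} = M_{i, \xi(i^*) - h + 2}$, so the terminal projective of the orbit is $P_{i^*}$ and $p_{\min} = \xi(i^*) - h + 2$.

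The length of the orbit, and hence $n(i)$, is therefore
\[
\frac{\xi(i) - p_{\min}}{2} + 1 = \frac{\xi(i) - \xi(i^*) + h}{2},
\]
which is the claimed formula. The consequence $n(i) + n(i^*) = h$ follows immediately by summing this identity for $i$ and for $i^*$. The main (minor) obstacle I foresee is the initial identification of $n(i)$ with the length of the $\tau_Q$-orbit of $I_i$: once this is taken for granted, the rest of the argument is a direct coordinate computation using only the Serre functor formula already available in the paper.
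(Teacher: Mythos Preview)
Your argument is correct. The paper does not actually supply a proof of this lemma: it is introduced with the words ``First recall the following classical fact'' and left unproved. So there is no paper proof to compare against, and your write-up fills that gap using only tools already present in Section~\ref{sec : reminders on AR}.

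The one point you flag as a potential obstacle---the identification of $n(i)$ with the length of the $\tau_Q$-orbit of $I_i$ inside $\mathrm{Rep}Q$---is in fact encoded in the paper's own setup and needs no outside input. The bijection $\varphi$ satisfies $\varphi(i,p-2) = \varphi(i,p)_+$, so the column $\{(i,\xi(i)), (i,\xi(i)-2), \ldots\}$ is sent, in order, to the successive occurrences of $i$ in $\ihat$; restricting to $\{1,\ldots,N\}$ gives precisely the occurrences of $i$ in $\mathbf{i}$. Under the bijection~\eqref{eq : bijection} the same column is $\{I_i, \tau_Q I_i, \ldots\}$, so the identification is immediate. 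After that, your use of $S^{-1}M_{j,s} = M_{j^*, s-h+2}$ (already invoked in the proof of Lemma~\ref{lem : cone of morphisms}) together with $S(P_j)=I_j$ pins down the terminal projective as $P_{i^*} = M_{i,\xi(i^*)-h+2}$, and the count goes through exactly as you wrote.
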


We can now state and prove the desired statement. 

 \begin{lemma} \label{lem : reduced word adapted to Qstar}
   Fix a reduced word $\mathbf{i} = (i_1, \ldots , i_N)$ of $w_0$ adapted to an orientation $Q$ of the Dynkin diagram of $\mathfrak{g}$ and fix a height function $\xi$ adapted to $Q$.  Let $\mathbf{i}^{*}$ denote the reduced word of $w_0$ given by $\mathbf{i}^{*} := (i_N , \ldots , i_1)$. Then $\mathbf{i}^{*}$ is adapted to the orientation $Q^{*}$. 
 \end{lemma}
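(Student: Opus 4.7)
The plan is to use the height function characterization of adaptedness and translate it under the word reversal. Recall (or verify by a short induction on $N$) that a reduced expression $\mathbf{j} = (j_1, \ldots, j_N)$ for $w_0$ is adapted to a height function $\eta$ if and only if the associated sequence of heights $q_t := \eta(j_t) - 2(k_t - 1)$, where $k_t$ counts the occurrences of $j_t$ among $j_1, \ldots, j_t$, satisfies $q_t > q_s$ whenever $t < s$ and $j_t, j_s$ are adjacent in the Dynkin diagram of $\mathfrak{g}$ (the case $j_t = j_s$ is automatic since $q$ strictly decreases along repeated letters). This is simply the iterative translation of the source/sink flipping condition: at each step, a vertex is a source of the current quiver precisely when all its Dynkin neighbors have strictly smaller running height.

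First I would note that $\mathbf{i}^*$ is automatically a reduced expression for $w_0 = w_0^{-1}$, so only adaptedness requires attention. Let $p_t$ denote the $\xi$-height of position $t$ in $\mathbf{i}$, and $p^*_s$ the $\xi^*$-height of position $s$ in $\mathbf{i}^*$. Position $s$ in $\mathbf{i}^*$ corresponds to position $t := N - s + 1$ in $\mathbf{i}$, and if $i_t = j$ appears there as the $k$-th occurrence of $j$ in $\mathbf{i}$, it is the $(n(j) - k + 1)$-th occurrence of $j$ in $\mathbf{i}^*$. Substituting the definition $\xi^*(j) = -\xi(j^*)$ and the identity $n(j) = (\xi(j) - \xi(j^*) + h)/2$ from Lemma~\ref{lem : number of occ}, a direct computation yields the key formula
$$p^*_{N - t + 1} + p_t = 2 - h \quad \text{for all } t \in \{1, \ldots, N\}.$$

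Finally, this identity transports the adaptedness condition in a purely order-theoretic way. Given $s < s'$ in $\mathbf{i}^*$ with $i^*_s = i_{N-s+1}$ and $i^*_{s'} = i_{N-s'+1}$ adjacent in the Dynkin diagram, setting $t := N - s + 1 > N - s' + 1 =: t'$, the desired inequality $p^*_s > p^*_{s'}$ becomes $p_{t'} > p_t$, which is precisely the height condition for $\mathbf{i}$ applied to the pair $t' < t$ of adjacent letters. Hence $\mathbf{i}^*$ satisfies the heights characterization with respect to $\xi^*$ and is thereby adapted to $Q^*$. The only non-trivial input in this plan is the formula $p^*_{N - t + 1} + p_t = 2 - h$, whose derivation hinges critically on Lemma~\ref{lem : number of occ}; once it is in place, everything else reduces to the observation that composing two order-reversing maps (position reversal and the affine flip $p \mapsto 2 - h - p$) yields an order-preserving bijection. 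The height characterization of adaptedness itself, though standard, is implicit in the definition of $\varphi$ in Section~\ref{sec : reminders on AR} and may warrant an explicit verification.
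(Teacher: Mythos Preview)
Your proof is correct. Both arguments hinge on Lemma~\ref{lem : number of occ} and amount to the same underlying calculation, but they are organised differently. The paper verifies the source condition step by step: for each $k$ it computes the height difference $\epsilon$ between $i_k$ and an arbitrary neighbour $j$ in the quiver $s_{i_{k+1}} \cdots s_{i_N} Q^{*}$, and shows $\epsilon = 1$ directly. You instead recast adaptedness as the global running-heights inequality $q_t > q_s$ for $t < s$ with adjacent letters, establish the clean identity $p^{*}_{N-t+1} + p_t = 2 - h$, and observe that this identity transports the inequality from $\mathbf{i}$ to $\mathbf{i}^{*}$ by composing two order-reversing maps. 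Your packaging makes the symmetry under word reversal more transparent, at the cost of invoking a characterisation whose backward direction, as you rightly flag, needs a word of justification; in particular it uses that $\mathbf{i}^{*}$ has the same letter multiplicities $n(j)$ as $\mathbf{i}$, which by Lemma~\ref{lem : number of occ} are also the correct multiplicities for $Q^{*}$. The paper's route is more self-contained; yours is slightly more conceptual.
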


 \begin{proof}
  Let us fix $k \in \{1, \ldots, N \}$. We show that $i_k$ is a source of $s_{i_{k+1}} \cdots s_{i_N} Q^{*}$. To do so, we show that 
  $$ s_{i_{k+1}} \cdots s_{i_N} \xi^{*}(i_k) = 1 + s_{i_{k+1}} \cdots s_{i_N} \xi^{*}(j) $$
  for all $j \sim i_k$. By assumption, $\mathbf{i}$ is adapted to $Q$, so $i_k$ is a source of $s_{i_{k-1}} \cdots s_{i_1} Q$. Hence 
  $$ \forall j \sim i_k, \quad s_{i_{k-1}} \cdots s_{i_1} \xi(i_k) = 1+s_{i_{k-1}} \cdots s_{i_1} \xi(j) . $$
  Moreover, we have 
  $$ s_{i_{k-1}} \cdots s_{i_1} \xi(j) =  \xi(j) - 2n_{<k}(j) \quad \text{and} \quad s_{i_{k+1}} \cdots s_{i_N} \xi^{*}(j) = \xi^{*}(j) - 2 n_{>k}(j) $$
  for all $j \in I$. 
  Therefore let us fix $j \sim i_k$  and denote 
  $$ \epsilon := s_{i_{k+1}} \cdots s_{i_N} \xi^{*}(i_k) - s_{i_{k+1}} \cdots s_{i_N} \xi^{*}(j) . $$
  We have 
  \begin{align*}
  \epsilon &= \xi^{*}(i_k) - 2 n_{>k}(i_k) - \xi^{*}(j) + 2 n_{>k}(j) = \xi(j^{*}) - \xi(i_k^{*}) - 2 n_{>k}(i_k) + 2 n_{>k}(j) \\
 &= \xi(j) - \xi(i_k) - 2n(j) + 2n(i_k) - 2 n_{>k}(i_k) + 2 n_{>k}(j) \quad \text{using Lemma~\ref{lem : number of occ},} \\
 &= \xi(j) - 2 n_{\leq k}(i_j) - (\xi(i_k) - 2 n_{\leq k}(i_k)) \\
 &= \xi(j) - 2 n_{<k}(i_j) - (\xi(i_k) - 2 n_{<k}(i_k)-2) \\
 &= s_{i_{k-1}} \cdots s_{i_1} \xi(j) - s_{i_{k-1}} \cdots s_{i_1} \xi(i_k) +2 \\
 &= 1 . 
 \end{align*}
 \end{proof}

\section{Reminders on HL categories and the map \texorpdfstring{$\td$}{td}}
  \label{sec : reminders HL and Dtilde}

   \subsection{Reminders on quantum affine algebras and HL categories}
    \label{sec : HL}

Let $\IZ$ denote the set of pairs
$$ \IZ := \{ (i,p) , i \in I , p \in i + 2 \mathbb{Z} \} $$
 and let $\YZ$ denote the torus
 $$ \YZ := \mathbb{Z}[Y_{i,p}^{\pm 1} , (i,p) \in \IZ]. $$

Following \cite{HL15}, we denote by $\CZ$ the full subcategory of $\mathcal{C}$ generated by the fundamental representations $L(Y_{i,p})$ for $p \in i + 2 \mathbb{Z}$. The simple objects of $\CZ$ are the modules $L(\mathfrak{m})$ where $\mathfrak{m}$ is a monomial in the $Y_{i,p}$. We note that the original construction of such monoidal subcategories for a source--sink orientation was developed in \cite{HL10} and later generalized to arbitrary height functions in \cite{HL15} and \cite{kashiwara2020monoidal, KKOP24}.

Frenkel--Reshetikhin's $q$-character morphism \cite[Theorem 3]{FR99} yields an injective ring homomorphism 
 $$ \chi_q : \CZ \longrightarrow \YZ . $$
 Using standard notation, we set 
 $$ A_{i,p+1} := Y_{i,p}Y_{i,p+2} \prod_{j \sim i} Y_{j,p+1}^{-1} $$
 for each $(i,p) \in \IZ$. It is then a known fact that for a simple object $ M = L(\mathfrak{m})$ in $\CZ$, the renormalized $q$-character of $M$ defined as $\mathfrak{m}^{-1} \chi_q(M)$ is a polynomial in the $A_{i,p+1}^{-1} , (i,p) \in \IZ$ with constant term $1$, see \cite[Conjecture 1]{FR99} and \cite[Theorem 4.1]{FM01}.

  As in the previous section, we fix an orientation $Q$ of the Dynkin diagram of $\mathfrak{g}$ and $\xi$ a height function adapted to $Q$. Then we set
 $$ \IxiZ := \{ (i,p) \in \IZ , p \leq \xi(i) \} $$
 and we denote by $\CxiZ$ the smallest full monoidal subcategory of $\CZ$ containing all the fundamental representations $L(Y_{i,p}) , (i,p) \in \IxiZ$. Given a reduced expression $\mathbf{i}_Q$ of $w_0$ adapted to $Q$, one can define a semi-infinite word denoted $\widehat{\mathbf{i_Q}} := (i_1,i_2, \ldots , )$ by setting $i_{k+mN} := i_k^{*m}$ i.e. the vertex obtained by applying $m$ times the involution $*$ to $i_k$, for every $1 \leq m \leq N$ and every $m \geq 0$. Using standard notations, we set 
$$ k_+ := \min\{l>k \mid i_l = i_k\} \qquad k_- := \max (\{l<k \mid i_l=i_k\} \sqcup \{0\}) .  $$
We also have a bijection 
$$ \varphi : \IxiZ \longrightarrow \mathbb{Z}_{\geq 1} $$
such that $\varphi(i,p-2) = \varphi(i,p)_+$. 
 We also denote by $\CQ$ the smallest full monoidal subcategory of $\CxiZ$ containing all the fundamental representations $L(Y_{i,p}) , (i,p) \in \widehat{I}_Q$ where 
 $$ \widehat{I}_Q := \{ (i,p) \in \IZ, \xi(i^{*})-h \leq  p \leq \xi(i) \} . $$
  The assignment 
  \begin{equation} \label{eq : grading}
      Y_{i,p} \in \YZ \longmapsto \beta_{i,p} \in \Delta
  \end{equation}
  endows $\YZ$ with a grading by the root lattice, such that $A_{j,s+1}$ has a degree $0$ for each $(j,s) \in \IZ$ by the mesh relation following from the fact that there is a distinguished triangle 
  $$ \tau_Q M_{i,p} \longrightarrow \bigoplus_{j \sim i}M_{j,p+1} \longrightarrow M_{i,p} \longrightarrow \tau_Q M_{i,p}[1] $$
  in $\DQ$.  In particular, the $q$-character of any simple module in $\CZ$ is homogeneous with respect to the grading~\eqref{eq : grading}. Moreover, ~\eqref{eq : grading} induces a bijection between $\{ Y_{i,p} , (i,p) \in \widehat{I}_Q \}$ and $\Delta_+$. 
Finally, following \cite{HL16}, we define for each $M \in \CxiZ$ the truncated $q$-character $\tchi_q(M)$ of $M$  as the element of $\YZ$ obtained from $\chi_q(M)$ by dropping all the terms involving variables $Y_{i,p}$ with $p> \xi(i)$. This gives another injective ring homomorphism 
 $$ \tchi_q : \CxiZ \longrightarrow \YxiZ :=\mathbb{Z}[Y_{i,p}^{\pm 1} ,  (i,p) \in \IxiZ] .$$ 
 Truncated $q$-character map is injective because $q$-character map is injective and the $q$-character of a module is determined by its highest $l$-weight and also by its truncated $q$-character \cite{FR99, FM01}.

  \subsection{Kirillov--Reshetikhin modules}
\label{sec : KR modules}

 A distinguished family of simple objects in $\CZ$ will play a central role in this work: the Kirillov--Reshetikhin modules (KR modules). They are the simple modules of the form 
$$ X_{i,p}^{(k)} := L \left( Y_{i,p} \cdots Y_{i,p+2k-2} \right) \quad (i,p) \in \IZ , k \geq 1 . $$
The $q$-characters of the KR modules satisfy the following remarkable identities called T-systems:
$$ \chi_q(X_{i,p}^{(k)}) \chi_q(X_{i,p+2}^{(k)}) = \chi_q(X_{i,p}^{(k+1)}) \chi_q(X_{i,p+2}^{(k)}) + \prod_{j \sim i} \chi_q(X_{j,p+1}^{(k)}) .  $$
Once a height function $\xi$ has been fixed as above, it was shown by Hernandez and Leclerc \cite{HL16} that the Grothendieck ring of $\CxiZ$ carries a cluster algebra structure, and that the classes of the KR modules are cluster variables in this cluster algebra. The T-systems are interpreted as exchange relations associated to a well-chosen mutation sequence, starting with an initial cluster given by the KR module of the form 
 \begin{equation} \label{eq : initial KRs}
 X_{i,p} := X_{i,p}^{( (\xi(i)-p+2)/2 )}, \quad (i,p) \in \IxiZ . 
 \end{equation}

\subsection{The map \texorpdfstring{$\tdxi$}{tdxi}}

 This subsection is devoted to necessary background involving  the constructions and results from our previous work \cite{CasbiLi}. Firstly, recall the coefficients of the inverse quantum Cartan matrix of $\mathfrak{g}$ (assumed to be of simply-laced type). The quantum Cartan matrix $C(z)$ of $\mathfrak{g}$ is defined as 
 $$ C_{i,j}(z) := 
 \begin{cases}
     z+z^{-1}, & \text{if $i=j$,} \\
     -1, & \text{if $i \sim j$,} \\
     0, & \text{otherwise.}
 \end{cases}
 $$
Here $z$ is an indeterminate and the notation $i \sim j$ means that there is an edge relating $i$ and $j$ in the Dynkin graph of $\mathfrak{g}$. We denote by $\widetilde{C}_{i,j}(z)$ the entries of the inverse of $C(z)$. For example, when $\mathfrak{g}$ is of type $D_4$, the series $\widetilde{C}_{i,j}(z)$ are given by
\begin{align*}
\widetilde{C}_{1,1}(z) &= z + z^5 - z^7 - z^{11} + z^{13} + z^{17} - z^{19} + O(z^{21}) \\
\widetilde{C}_{1,2}(z) &= z^2 + z^4 - z^8 - z^{10} + z^{14} + z^{16} - z^{20} + O(z^{22}) \\
\widetilde{C}_{1,3}(z) &= z^3 - z^9 + z^{15} - z^{21} + O(z^{23}) \\
\widetilde{C}_{1,4}(z) &= z^3 - z^9 + z^{15} - z^{21} + O(z^{23}) \\
\widetilde{C}_{2,2}(z) &= z + 2 z^3 + z^5 - z^7 - 2 z^9 - z^{11} + z^{13} + 2 z^{15} + z^{17} - z^{19} + O(z^{21}) \\
\widetilde{C}_{2,3}(z) &= z^2 + z^4 - z^8 - z^{10} + z^{14} + z^{16} - z^{20} + O(z^{22}) \\
\widetilde{C}_{2,4}(z) &= z^2 + z^4 - z^8 - z^{10} + z^{14} + z^{16} - z^{20} + O(z^{22}) \\
\widetilde{C}_{3,3}(z) &= z + z^5 - z^7 - z^{11} + z^{13} + z^{17} - z^{19} + O(z^{21}) \\
\widetilde{C}_{3,4}(z) &= z^3 - z^9 + z^{15} - z^{21} + O(z^{23}) \\
\widetilde{C}_{4,4}(z) &= z + z^5 - z^7 - z^{11} + z^{13} + z^{17} - z^{19} + O(z^{21}). 
\end{align*}

We then define the integers $\widetilde{C}_{i,j}(m)$ for every $i,j \in I$ and $m \geq 0$ by 
$$ \widetilde{C}_{i,j}(z) = \sum_{m \geq 1} \widetilde{C}_{i,j}(m)z^m .$$
The definition is extended to all $m \in \mathbb{Z}$ by setting $\widetilde{C}_{i,j}(m) := 0$ for all $m \leq 0$. The following result is due to Hernandez--Leclerc \cite{HL15}.

\begin{proposition}[ \cite{HL15}, Proposition 2.1] \label{prop : reminders on Ctilde}
Let $(i,p)$ and $(j,s)$ in $\IZ$ and assume  $s \geq p$. Then one has
    $$ \tilde{C}_{i,j}(s-p+1) = \langle \beta_{i,p} ,  \beta_{j,s} \rangle_Q . $$
\end{proposition}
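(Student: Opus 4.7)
The plan is to prove the identity by induction on $m := s-p+1 \geq 1$, by showing that both sides satisfy the same three-term recursion with matching initial data. On one side, the recursion comes from the inversion of the quantum Cartan matrix; on the other, from the AR mesh relations in the Auslander–Reiten quiver of $\mathcal{D}^b(\mathrm{Rep}\, Q)$.

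For the base case $m=1$, the parity constraints imposed by $(i,p), (j,s) \in \IZ$ with $s=p$ force $i \equiv j \pmod{2}$. If $i=j$, then $(i,p)$ and $(j,s)$ index the same indecomposable, so $\langle \beta_{i,p}, \beta_{j,s} \rangle_Q = 1 = \widetilde{C}_{i,i}(1)$. If $i \neq j$, the indecomposables $M_{i,p}$ and $M_{j,p}$ lie on distinct $\tau$-orbits at the same AR level; standard AR arguments combined with Serre duality (as in the proof of Lemma~\ref{lem : cone of morphisms}) yield $\mathrm{Hom}(M_{i,p}, M_{j,p}) = 0 = \mathrm{Ext}^1(M_{i,p}, M_{j,p})$, hence $\langle \beta_{i,p}, \beta_{j,p}\rangle_Q = 0 = \widetilde{C}_{i,j}(1)$. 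The same Serre duality argument also shows $\langle \beta_{i,p}, \beta_{j,p-1}\rangle_Q = 0$, matching the convention $\widetilde{C}_{i,j}(0) = 0$ that will be needed as the ``previous value'' when starting the induction.

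For the inductive step, the defining identity $\widetilde{C}(z)C(z) = I$ expanded coefficient-wise in $z$ gives, for all $m \geq 1$,
\begin{equation*}
    \widetilde{C}_{i,j}(m+1) + \widetilde{C}_{i,j}(m-1) = \sum_{k \sim j} \widetilde{C}_{i,k}(m).
\end{equation*}
On the geometric side, the AR triangle in $\mathcal{D}^b(\mathrm{Rep}\, Q)$ ending at $M_{j,s+2}$ produces the mesh relation
\begin{equation*}
    \beta_{j,s} + \beta_{j,s+2} = \sum_{k \sim j} \beta_{k, s+1}
\end{equation*}
in the root lattice. Applying the bilinear form $\langle \beta_{i,p}, \cdot \rangle_Q$ to this identity yields exactly the same three-term relation as on the $\widetilde{C}$-side. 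Combined with the induction hypothesis for $\langle \beta_{i,p}, \beta_{j,s}\rangle_Q$ and $\langle \beta_{i,p}, \beta_{k,s+1}\rangle_Q$, this determines $\langle \beta_{i,p}, \beta_{j, s+2}\rangle_Q = \widetilde{C}_{i,j}(m+1)$, closing the induction.

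The main subtlety is ensuring that the mesh relation is available unconditionally, without running into projective boundary artifacts that would plague it in $\mathrm{Rep}\, Q$ alone. This is handled cleanly by working throughout in $\mathcal{D}^b(\mathrm{Rep}\, Q)$, where $\tau_Q$ is invertible, every indecomposable admits a genuine AR triangle, and the derived Euler form $\langle M, N\rangle := \sum_r (-1)^r \dim \mathrm{Hom}_{\mathcal{D}^b}(M, N[r])$ restricts consistently to the Euler form on $K_0(\mathrm{Rep}\, Q)$ under the bijection~\eqref{eq : bijection}. This is the one place where the derived-categorical framework set up in Section~\ref{sec : reminders on AR}, rather than the module category itself, is essential.
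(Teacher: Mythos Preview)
The paper does not supply its own proof of this proposition; it is quoted from \cite{HL15}, Proposition~2.1, and no argument is reproduced. Your approach---verifying that both sides obey the same three-term linear recursion (one side from the coefficientwise expansion of $C(z)\widetilde{C}(z)=I$, the other from the mesh relations in $\mathcal{D}^b(\mathrm{Rep}\,Q)$) with matching initial values at $m=0$ and $m=1$---is correct and is the standard route to this identity. The one point that deserves care is the base case $m=1$ with $i\neq j$: your appeal to ``standard AR arguments'' is justified by Happel's description of morphisms in $\mathcal{D}^b(\mathrm{Rep}\,Q)$ via paths in the repetitive quiver, which forces any non-identity morphism out of $M_{i,p}$ to land in some $M_{k,s}$ with $s>p$, whence $\Hom(M_{i,p},M_{j,p})=0$; the vanishing of $\mathrm{Ext}^1$ then follows from the Serre-functor bound $s\leq p+h-2$ exactly as in the proof of Lemma~\ref{lem : cone of morphisms}.

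One small indexing slip: with your convention $m=s-p+1$ and the mesh relation written at $M_{j,s+2}$, the induction hypothesis sits at levels $m$ and $m+1$ and the conclusion is $\langle\beta_{i,p},\beta_{j,s+2}\rangle_Q=\widetilde{C}_{i,j}(m+2)$, not $\widetilde{C}_{i,j}(m+1)$ as you wrote; the argument is unaffected once this shift is corrected.
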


In our previous work \cite{CasbiLi}, we defined a morphism of $\mathbb{C}$-algebras $\tdxi : \mathbb{C} \otimes \YxiZ \longrightarrow \mathbb{C}(\mathfrak{t}^{*})$ by setting 
   $$ \forall (i,p) \in \IxiZ, \enspace \tdxi (Y_{i,p}) := \prod_{  \substack{j \in I \\ s \in \xi(j)- 2 \mathbb{Z}_{\geq 0}} } \beta_{j,s}^{\tilde{C}_{i,j}(s-p-1) - \tilde{C}_{i,j}(s-p+1)} . $$

   \begin{remark} \label{rk : remark on new Dtilde}
 We would like to outline the fact that the above is a slightly modified version of the map $\td$ defined in \cite{CasbiLi}. Here $\beta_{j,s}$ can be a positive or a negative root, whereas the definition of \cite{CasbiLi} does not take the sign into account. In other words, for each $i,p$ the present definition of $\tdxi(Y_{i,p})$ differs by a sign from what it would be with the definition taken in \cite{CasbiLi}.  Note however that both definitions are identical if $\varphi(i,p) \in \{1, \ldots , N\}$, as all the $\beta_{j,s}$ contributing to $\td_{\xi}(Y_{i,p})$ are actually positive roots. Therefore, all the results proved in \cite{CasbiLi} still hold with the present definition of $\td_{\xi}$, as the image of $\CN$ under $\tchi_q$ is contained is the subtorus generated by the $Y_{i,p}$ such that $\varphi(i,p) \in \{1, \ldots , N\}$. 
 \end{remark}

  The following properties can be stated and proved as in \cite{CasbiLi} without any modification.

   \begin{lemma}[\cite{CasbiLi}, Remark 6.7] \label{lem : Dtilde of As}
    For every $(i,p) \in  \IxiZ$, we have that 
    $$ \tdxi(A_{i,p-1}) = \frac{\beta_{i,p}}{\beta_{i,p-2}} .  $$
   \end{lemma}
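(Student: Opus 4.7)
The plan is to directly compute $\tdxi(A_{i,p-1})$ by expanding the product using multiplicativity and then simplifying the resulting exponent of each $\beta_{j,s}$ using the defining recursion of the inverse quantum Cartan matrix.

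First I would write $A_{i,p-1} = Y_{i,p-2} Y_{i,p} \prod_{k \sim i} Y_{k,p-1}^{-1}$ and, using the definition of $\tdxi$ on each factor, obtain that the exponent of an arbitrary $\beta_{j,s}$ with $j \in I$ and $s \in \xi(j) - 2\ZZ_{\geq 0}$ in $\tdxi(A_{i,p-1})$ equals
\begin{equation*}
E_{j,s} \;=\; \tilde{C}_{i,j}(s-p-1) - \tilde{C}_{i,j}(s-p+3) - \sum_{k \sim i} \bigl[ \tilde{C}_{k,j}(s-p) - \tilde{C}_{k,j}(s-p+2) \bigr],
\end{equation*}
where the two terms of the form $\tilde{C}_{i,j}(s-p+1)$ coming from $Y_{i,p-2}$ and $Y_{i,p}$ cancel out.

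Next I would invoke the defining identity $C(z)\tilde{C}(z) = I$ of the inverse quantum Cartan matrix, which in terms of coefficients reads
\begin{equation*}
\tilde{C}_{i,j}(u-1) + \tilde{C}_{i,j}(u+1) - \sum_{k \sim i} \tilde{C}_{k,j}(u) \;=\; \delta_{i,j} \delta_{u,0}, \qquad u \in \ZZ.
\end{equation*}
Applying this recursion with $u = s-p$ and then with $u = s-p+2$, I can rewrite each of the two sums $\sum_{k \sim i} \tilde{C}_{k,j}(s-p)$ and $\sum_{k \sim i} \tilde{C}_{k,j}(s-p+2)$ in terms of $\tilde{C}_{i,j}$. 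Substituting back into $E_{j,s}$, the bulk telescopes and leaves only
\begin{equation*}
E_{j,s} \;=\; \delta_{i,j}\bigl( \delta_{s-p+1,\,1} - \delta_{s-p+1,\,-1} \bigr) \;=\; \delta_{i,j}\bigl( \delta_{s,p} - \delta_{s,p-2} \bigr).
\end{equation*}

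Finally, since $(i,p) \in \IxiZ$ means $p \leq \xi(i)$, both pairs $(i,p)$ and $(i,p-2)$ lie in the index set over which the product defining $\tdxi$ is taken, so this collapses to
\begin{equation*}
\tdxi(A_{i,p-1}) \;=\; \beta_{i,p}^{\,+1} \cdot \beta_{i,p-2}^{\,-1} \;=\; \frac{\beta_{i,p}}{\beta_{i,p-2}},
\end{equation*}
which is the claim. The only real obstacle is keeping track of the index shifts and signs correctly so that the telescoping of the recursion produces exactly the two Kronecker symbols; everything else is purely routine bookkeeping.
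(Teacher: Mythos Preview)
Your proof is correct. The paper does not supply its own argument for this lemma; it simply cites \cite[Remark~6.7]{CasbiLi} and notes that the proof carries over verbatim with the present sign-modified definition of $\tdxi$. Your direct computation via the coefficient recursion $\tilde{C}_{i,j}(u-1)+\tilde{C}_{i,j}(u+1)-\sum_{k\sim i}\tilde{C}_{k,j}(u)=\delta_{i,j}\delta_{u,0}$ coming from $C(z)\tilde{C}(z)=I$ is exactly the natural approach and is presumably what is done in the cited reference; the telescoping and the identification of the surviving Kronecker terms are carried out correctly.
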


 The only results from \cite{CasbiLi} that get slightly modified when considering the present definition of $\tdxi$ are Proposition 11.1 and Corollary 11.2. Hence we reproduce detailed proofs below for the reader's convenience. 

  \begin{proposition} \label{prop : prod of Ys equal to pm 1}
For any $(i,p) \in \IxiZ$  we have that 
  $$ \tdxi(Y_{i,p-2h+2} \cdots Y_{i,p}) = (-1)^{d_i} . $$
  \end{proposition}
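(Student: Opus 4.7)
The plan is to combine a telescoping of the defining formula for $\tdxi$ with the Coxeter periodicity of the roots $\beta_{j,s}$, and then track the sign contributions coming from negative roots using Auslander--Reiten combinatorics.

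First, I would expand the definition of $\tdxi$ on each factor $Y_{i,p-2h+2+2k}$ for $k = 0, \ldots, h-1$. Summing the exponents of a fixed $\beta_{j,s}$ across all $h$ factors, the consecutive differences $\widetilde{C}_{i,j}(s-p'-1)-\widetilde{C}_{i,j}(s-p'+1)$ (with $p' = p-2h+2,\,p-2h+4,\ldots,p$) collapse telescopically, producing
\[
\tdxi\!\bigl(Y_{i,p-2h+2}\cdots Y_{i,p}\bigr)
=\prod_{\substack{j\in I\\ s\in \xi(j)-2\mathbb{Z}_{\ge 0}}}
\beta_{j,s}^{\,\widetilde{C}_{i,j}(s-p-1)\,-\,\widetilde{C}_{i,j}(s-p+2h-1)} .
\]

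Next, since $\tau_Q^h = 1$ in the Weyl group, the roots enjoy the $2h$-periodicity $\beta_{j,s+2h}=\beta_{j,s}$ for every $(j,s)\in\IZ$. Splitting the above product into two pieces and performing the substitution $s\mapsto s-2h$ in the factor weighted by $-\widetilde{C}_{i,j}(s-p+2h-1)$, the bulk of the two pieces cancels, leaving a finite product over a single Coxeter window,
\[
\prod_{j\in I}\prod_{u=\xi(j)-2h+2}^{\xi(j)}
\beta_{j,u}^{\,-\widetilde{C}_{i,j}(u+2h-p-1)} .
\]
Writing each factor as $\beta_{j,u} = \varepsilon_{j,u}\,|\beta_{j,u}|$ with $\varepsilon_{j,u}\in\{\pm 1\}$ and $|\beta_{j,u}|\in\Phi_+$, Remark~\ref{rk : remark on new Dtilde} ensures that the product of the $|\beta_{j,u}|^{-\widetilde{C}_{i,j}(u+2h-p-1)}$ coincides with the output of the sign-blind variant of $\widetilde{D}_\xi$ from \cite{CasbiLi}, which Proposition~11.1 of loc.~cit.\ showed to equal $1$. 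Therefore the product above equals $(-1)^E$, where
\[
E \equiv \sum_{\substack{(j,u)\ \text{in the Coxeter window}\\ \beta_{j,u}<0}}\widetilde{C}_{i,j}(u+2h-p-1)\pmod 2 .
\]

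It remains to show that $E\equiv d_i\pmod 2$. Using Proposition~\ref{prop : reminders on Ctilde} to rewrite $\widetilde{C}_{i,j}(u+2h-p-1)$ as the Euler form $\langle \beta_{i,p-2h+2},\beta_{j,u+2h-2}\rangle_Q$, together with the Serre-functor identity $S=\tau[1]$ and the shift description $M_{j,s}[m]=M_{j^{*(m)},s+mh}$ from Section~\ref{sec : reminders on AR} to detect which $\beta_{j,u}$'s are negative roots, I would put the indices $(j,u)$ contributing a negative root with odd exponent in bijection with the cone $\mathcal{C}_{i,p}$ of Lemma~\ref{lem : cone of morphisms}. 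Since $|\mathcal{C}_{i,p}|=d_i$ by that lemma, this yields $E\equiv d_i\pmod 2$ and hence the desired identity. The main obstacle will be precisely this last bijection: matching the negative-root contributions in the Coxeter window with $\mathcal{C}_{i,p}$ and verifying the parity of each exponent, which rests on a careful combination of the Serre-functor symmetry with Proposition~\ref{prop : reminders on Ctilde} at the boundary of the range where the latter applies directly.
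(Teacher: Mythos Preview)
Your telescoping step and reduction to a finite window are correct and essentially what the paper does; the only cosmetic difference is that the paper invokes the $2h$-periodicity of the coefficients $\widetilde{C}_{i,j}(\cdot)$ rather than that of the roots $\beta_{j,s}$, landing on the window $p-2h+2\le s\le p+1$ instead of $\xi(j)-2h+2\le u\le \xi(j)$.

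Where your route diverges is in the endgame. You split the finite product into an ``absolute value'' part, handled by citing \cite{CasbiLi}, Proposition~11.1, and a residual sign $(-1)^E$ that you still have to compute. The paper avoids this detour entirely: within its window it observes (equation~\eqref{eq : set-theoretic eq} in the text) that the contributing indices come in pairs $(j,s)$ and $(j^{*},s+h)$ with $\beta_{j^{*},s+h}=-\beta_{j,s}$ and opposite exponents $\pm\langle\beta_{i,p-2h+2},\beta_{j,s}\rangle_Q$. Each such pair therefore contributes
\[
\beta_{j,s}^{\,e}\,(-\beta_{j,s})^{-e}=(-1)^{e},
\]
and since in simply-laced type the Euler form of two roots lies in $\{-1,0,1\}$, every pair with $e\ne 0$ gives exactly $-1$. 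The number of such pairs is $\sharp\,\mathcal{C}_{i,p-2h+2}=d_i$ by Lemma~\ref{lem : cone of morphisms}, yielding $(-1)^{d_i}$ in one stroke.

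This pairing is precisely the bijection you flag as your ``main obstacle'', so once you see it there is no need to pass through \cite{CasbiLi}, Proposition~11.1 at all: the same pairing simultaneously forces the rational-function part to cancel and counts the sign. In short, your plan works, but the paper's argument is a strict simplification of your last two steps rather than a genuinely different method.
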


   \begin{proof}
    From the definition of $\tdxi$, we get 
 $$ \tdxi(Y_{i,p-2h+2} \cdots Y_{i,p}) =  \prod_{(j,s) \in \Ixi} \beta_{j,s}^{ - \tilde{C}_{i,j}(s-p+2h-1) + \tilde{C}_{i,j}(s-p-1)} . $$
  If $(j,s) \in \Ixi$ is such that $s>p+1$, then both $s-p-1$ and $s-p+2h-1$ are positive, and hence $\tilde{C}_{i,j}(s-p+2h-1) = \tilde{C}_{i,j}(s-p-1)$. Similarly, if $s<p-2h+2$, then both $s-p-1$ and $s-p+2h-1$ are negative, and hence $\tilde{C}_{i,j}(s-p+2h-1) = \tilde{C}_{i,j}(s-p-1) = 0$.  Thus only the pairs $(j,s)$ such that $p-2h+2 \leq s \leq p+1$ may have a non-trivial contribution to the product. For such pairs $(j,s)$ we have that $s-p-1 \leq 0$ so that $\tilde{C}_{i,j}(s-p-1)=0$. Hence we can rewrite 
  $$ \tdxi(Y_{i,p-2h+2} \cdots Y_{i,p}) =  \prod_{ \substack{ (j,s) \in \Ixi \\ p-2h+2 \leq s \leq p+1 }} \beta_{j,s}^{- \tilde{C}_{i,j}(s-p+2h-1)} = \prod_{ \substack{ (j,s) \in \Ixi \\ p-2h+2 \leq s \leq p+1}} \beta_{j,s}^{ -\langle \beta_{i,p-2h+2} , \beta_{j,s} \rangle_Q } $$
  by Proposition~\ref{prop : reminders on Ctilde}. We claim that, with the notations of Section~\ref{sec : AR}, we have 
  \begin{equation} \label{eq : set-theoretic eq}
\scalemath{0.92}{  \{ (j,s) \in \Ixi, p-2h+2 \leq s \leq p+1 \mid \langle \beta_{i,p-2h+2} , \beta_{j,s} \rangle_Q \neq 0 \} = \bigsqcup_{(j,s) \in 
 \Ixi \cap \mathcal{C}_{i^*,p-h+2} } \{ (j,s),(j^{*},s-h) \}. }
 \end{equation}
 Let us begin with the direct inclusion. Let $(j,s) \in \Ixi$ such that $(j,s)$ belongs to the left hand side of~\eqref{eq : set-theoretic eq}. If $s \geq p-h+2$ then $(j,s) \in \mathcal{C}_{i^*,p-h+2}$ because $\langle \beta_{i^*,p-h+2} , \beta_{j,s} \rangle_Q = - \langle \beta_{i,p-2h+2} , \beta_{j,s} \rangle_Q \neq 0$ so that $(j,s)$ belongs to the right hand side of~\eqref{eq : set-theoretic eq}. If on the other hand $s < p-h+2$ then setting $j' := j^*$ and $s' := s+h$ we get that $p-h+2 < s' \leq p+1$ so that $(j',s') \in \mathcal{C}_{i^*,p-h+2}$ as $\langle \beta_{i^*,p-h+2} , \beta_{j',s'} \rangle_Q  = \langle \beta_{i,p-2h+2} , \beta_{j,s} \rangle_Q \neq 0$ and $(j,s) = ({j'}^* , s'-h)$. It remains to check that $(j',s') \in \Ixi$ i.e.  $s' \leq \xi(j')$. Assume this is not the case. Then we have $s+h = s' > \xi(j') = \xi(j^*)$ so we have that $\xi(j^*)-h < s \leq \xi(j)$. In other words, $M_{j,s} \in \rep  Q$. We also have 
 $$ \xi(j^*) < s' \leq p + 1 \leq \xi(i)+1 $$
 which implies that $p \geq \xi(j^*) > \xi(i^*)-h$ so that $M_{i,p} \in \rep Q$ as well. Then we have
 $$\langle \beta_{i,p-2h+2} , \beta_{j,s} \rangle_Q = \langle \beta_{i,p+2} , \beta_{j,s} \rangle_Q = -  \langle \beta_{j,s} , \beta_{i,p} \rangle_Q $$
 by the Auslander--Reiten formula. But we have that $\Hom(M_{j,s} , M_{i,p}) = 0$ because $p>s+h-2$ (see the proof of Lemma~\ref{lem : cone of morphisms}) and $\mathrm{Ext}^1(M_{j,s} , M_{i,p}) = 0$ as $p>s$. Therefore we get that $\langle \beta_{i,p-2h+2} , \beta_{j,s} \rangle_Q = 0$ which is a contradiction as $(j,s)$ is assumed to belong to the left hand side of~\eqref{eq : set-theoretic eq}. Hence we have that $s' \leq \xi(j')$ which establishes the direct inclusion.

 
 Conversely, fix $(j,s) \in \Ixi \cap \mathcal{C}_{i^* , p-h+2}$. Then $(j,s)$ belongs to the left hand side of the desired equality as by definition of $\mathcal{C}_{i^*,p-h+2}$ we have $p-h+2 \leq s < p+2$ and $\langle \beta_{i,p-2h+2} , \beta_{j,s} \rangle_Q =-  \langle \beta_{i^*,p-h+2} , \beta_{j,s} \rangle_Q  \neq 0 $. We also have that $(j^*,s-h)$ belongs to the left hand side of~\eqref{eq : set-theoretic eq} as well. Indeed, $s-h \leq \xi(j)-h \leq  \xi(j^*)$ so that $(j^*,s-h) \in \Ixi$, and we also have that $p-2h+2 \leq s-h < p-h+2 \leq p+1$. Finally we also have that $\langle \beta_{i,p-2h+2} , \beta_{j^*,s-h} \rangle_Q = \langle \beta_{i^*,p-h+2} , \beta_{j,s} \rangle_Q \neq 0$ as $(j,s) \in \mathcal{C}_{i^*,p-h+2}$. Thus the converse inclusion holds which proves~\eqref{eq : set-theoretic eq} as claimed. 
We can now write 
  \begin{align*}
      \tdxi(Y_{i,p-2h+2} \cdots Y_{i,p}) &=  \prod_{(j,s) \in \Ixi \cap  \mathcal{C}_{i^*,p-h+2}} \beta_{j,s}^{ - \langle \beta_{i,p-2h+2} , \beta_{j,s} \rangle_Q} (- \beta_{j,s})^{\langle \beta_{i,p-2h+2} , \beta_{j,s} \rangle_Q} \\
      &= (-1)^{ - \sum_{(j,s) \in  \mathcal{C}_{i^*,p-h+2}} \langle \beta_{i^* , p-h+2} , \beta_{j,s} \rangle_Q}  = (-1)^{d_{i^*}} \quad \text{by Lemma~\ref{lem : cone of morphisms},} \\
      &= (-1)^{d_i}  \quad \text{by Corollary~\ref{coro di = distar}.} \end{align*} 
  This concludes the proof of the Proposition. 
   \end{proof} 

  From this, we can derive the following consequences:

   \begin{corollary} \label{coro : periodicity of Ys}
       Let $(i,p) \in \IxiZ$. Then we have 
       $$ \tdxi(Y_{i,p}) = \tdxi (Y_{i,p-2h}) . $$
   \end{corollary}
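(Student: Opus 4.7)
The plan is to deduce this periodicity statement as a direct consequence of Proposition~\ref{prop : prod of Ys equal to pm 1}. The key observation is that if we apply the previous proposition at two consecutive values $(i,p)$ and $(i,p-2)$, the two resulting products of $h$ consecutive $Y$-variables differ only in their endpoints: one product has $Y_{i,p}$ as its right-most factor, the other has $Y_{i,p-2h}$ as its left-most factor, and all the other $h-1$ intermediate factors coincide. Taking the ratio will then isolate exactly $Y_{i,p}/Y_{i,p-2h}$.

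More precisely, I would first verify that both $(i,p)$ and $(i,p-2)$ belong to $\IxiZ$. This is immediate from the hypothesis $(i,p) \in \IxiZ$, since $p-2 < p \leq \xi(i)$ and $p-2 \in i + 2\mathbb{Z}$. Applying Proposition~\ref{prop : prod of Ys equal to pm 1} to each of these pairs yields
$$ \tdxi\bigl(Y_{i,p-2h+2} Y_{i,p-2h+4} \cdots Y_{i,p}\bigr) \;=\; (-1)^{d_i} \;=\; \tdxi\bigl(Y_{i,p-2h} Y_{i,p-2h+2} \cdots Y_{i,p-2}\bigr). $$

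Since $\tdxi$ is a morphism of $\mathbb{C}$-algebras and the elements $Y_{i,s}^{\pm 1}$ belong to its domain $\mathbb{C} \otimes \YxiZ$ for all the relevant pairs, I can divide one identity by the other. The shared factors $Y_{i,p-2h+2} \cdots Y_{i,p-2}$ cancel, leaving exactly
$$ \tdxi\!\left( \frac{Y_{i,p}}{Y_{i,p-2h}} \right) = 1, $$
which gives the claim. There is essentially no obstacle here: the argument is a completely routine telescoping, and the work was already done in establishing Proposition~\ref{prop : prod of Ys equal to pm 1}. The interest of the statement lies not in its proof but in its consequence, namely that it enables the periodic extension of $\tdxi$ from $\YxiZ$ to the full ring $\YZ$ announced in the introduction.
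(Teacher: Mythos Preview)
Your proof is correct and follows essentially the same approach as the paper: both apply Proposition~\ref{prop : prod of Ys equal to pm 1} twice, at $(i,p)$ and at $(i,p-2)$, and cancel the shared factors $Y_{i,p-2h+2}\cdots Y_{i,p-2}$ to isolate $Y_{i,p}/Y_{i,p-2h}$. Your presentation is arguably cleaner.
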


    \begin{proof}
 Using twice Proposition~\ref{prop : prod of Ys equal to pm 1}, we have 
 \begin{align*}
     \tdxi(Y_{i,p-2h}) &= (-1)^{d_i} \tdxi (Y_{i,p-2h}) \tdxi(Y_{i,p-2h+2} \cdots Y_{i,p}) \\
     &= (-1)^{d_i} \tdxi (Y_{i,p-2h} \cdots Y_{i,p+2}) \tdxi (Y_{i,p}) \\
     &= \tdxi (Y_{i,p}) .
 \end{align*}
    \end{proof}

  Corollary~\ref{coro : periodicity of Ys} has the important consequence that it allows to extend $\tdxi$ to the whole torus $\YZ$ by setting, for any $(i,p) \in \IZ$:
  $$ Y_{i,p} \longmapsto \tdxi(Y_{i,p-2mh}) \quad \text{where $m \geq 0$ is such that $p-2mh \leq \xi(i)$ .}  $$
  Note that Lemma \ref{lem : Dtilde of As} can now be extended to the whole torus $\YZ$. 

   \begin{corollary} \label{coro : Dtilde of A inverse}
    For any $(i,p) \in \IZ$, we have that 
    $$ \tdxi(A_{i,p+1}^{-1}) = \frac{\beta_{i,p}}{\beta_{i,p+2}} . $$
   \end{corollary}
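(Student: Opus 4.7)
The proof is a routine extension of Lemma~\ref{lem : Dtilde of As} using the periodicity established in Corollary~\ref{coro : periodicity of Ys}, together with the fact that the Coxeter element has order $h$ on the root lattice. I outline the steps.

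First I would observe that the statement already holds (with $\tdxi$ in its original, unextended sense) whenever $(i,p+2) \in \IxiZ$, i.e.\ $p \leq \xi(i)-2$. Indeed, expanding
$$ A_{i,p+1}^{-1} = Y_{i,p}^{-1} Y_{i,p+2}^{-1} \prod_{j \sim i} Y_{j,p+1} $$
and applying Lemma~\ref{lem : Dtilde of As} with $p$ replaced by $p+2$ gives exactly $\beta_{i,p}/\beta_{i,p+2}$.

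Next, for an arbitrary $(i,p) \in \IZ$, I would choose $m \geq 0$ large enough so that $p + 2 - 2mh \leq \xi(i)$. Since $|\xi(j)-\xi(i)|=1$ for every $j \sim i$, the same $m$ also satisfies $p+1-2mh \leq \xi(j)$ for all neighbors $j$, so each of the factors $Y_{i,p-2mh}$, $Y_{i,p+2-2mh}$, and $Y_{j,p+1-2mh}$ lies in $\YxiZ$. By the extension of $\tdxi$ to $\YZ$ justified by Corollary~\ref{coro : periodicity of Ys}, this gives
$$ \tdxi(A_{i,p+1}^{-1}) = \tdxi(Y_{i,p-2mh})^{-1} \tdxi(Y_{i,p+2-2mh})^{-1} \prod_{j\sim i} \tdxi(Y_{j,p+1-2mh}) = \tdxi(A_{i,p+1-2mh}^{-1}), $$
and the first case (applied to $p - 2mh$, which satisfies $p-2mh \leq \xi(i)-2$) yields
$$ \tdxi(A_{i,p+1}^{-1}) = \frac{\beta_{i,p-2mh}}{\beta_{i,p+2-2mh}}. $$

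Finally I would invoke the periodicity of the Coxeter transform on the root lattice. Since $\beta_{i,p} = \tau_Q^{(\xi(i)-p)/2}(\boldsymbol{\dim}\, I_i)$ and $\tau_Q^h = \mathrm{id}$ as an operator on $\Phi$, we have $\beta_{i,p-2mh} = \tau_Q^{mh}\beta_{i,p} = \beta_{i,p}$ and similarly $\beta_{i,p+2-2mh} = \beta_{i,p+2}$. Substituting yields the desired equality. The only mild subtlety is checking the uniform shift works for all neighbors $j \sim i$, which is immediate from the adaptedness of $\xi$; apart from that, the argument is essentially formal, a direct consequence of the periodicity in Corollary~\ref{coro : periodicity of Ys} mirroring the $h$-periodicity on the root side.
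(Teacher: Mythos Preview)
Your argument is correct and matches the paper's intended proof: the paper does not spell out the corollary but simply remarks that Lemma~\ref{lem : Dtilde of As} ``can now be extended to the whole torus $\YZ$'' once $\tdxi$ has been extended via the periodicity of Corollary~\ref{coro : periodicity of Ys}. Your write-up makes explicit exactly the two ingredients this extension needs---the $2h$-periodicity of $\tdxi$ on the $Y$-variables and the relation $\tau_Q^{h}=\id$ on roots giving $\beta_{i,p-2mh}=\beta_{i,p}$---so it is essentially the same approach, just with the details filled in.
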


\subsection{Composition with truncated \texorpdfstring{$q$-characters}{q-characters}} \label{sec : Dtilde with truncated q-char}

 Recall that $\tchi_q$ denotes the truncated $q$-character morphism associated to the height function $\xi$ previously fixed.
Recall also the category $\CQ$ from Section~\ref{sec : HL}. 
Hernandez--Leclerc \cite{HL15} constructed an isomorphism of algebras 
$$ \iota_Q : \mathbb{C} \otimes K_0(\CQ) \xrightarrow{\simeq} \CN $$
inducing a bijective correspondence between the classes of simple modules in $\CQ$ and the elements of the dual canonical basis of $\CN$. In view of Remark~\ref{rk : remark on new Dtilde}, we can recall the main result we obtained in  \cite{CasbiLi} using the same notation $\tdxi$.

  \begin{theorem}[{\cite[Theorem 6.1]{CasbiLi}}] \label{thm : thm CasbiLi}
  Let $Q$ be a Dynkin quiver and let $\xi$ be a height function adapted to $Q$. Then  the restriction of the composition $\tdxi \circ \tchi_q$ to $\mathbb{C} \otimes K_0(\CQ)$ coincides with $\barD \circ \iota_Q$ where $\barD$ is the morphism introduced in \cite{BKK21}.  
  \end{theorem}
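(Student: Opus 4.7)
The proof I would give is essentially a reduction to the main theorem of \cite{CasbiLi}, using the observation already made in Remark~\ref{rk : remark on new Dtilde}. The new definition of $\tdxi$ differs from the old one only by the signs attached to the factors $\beta_{j,s}$ when $(j,s)$ corresponds to a negative root, i.e.\ when $\varphi(j,s) \notin \{1,\ldots,N\}$. Hence the strategy is to show that on the relevant subtorus containing $\tchi_q(K_0(\CQ))$, only positive roots contribute, so that the two definitions agree and the old theorem applies unchanged.

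First I would make precise the subtorus involved. Recall that $\CQ$ is generated by the fundamentals $L(Y_{i,p})$ for $(i,p) \in \widehat{I}_Q$, and that under the grading \eqref{eq : grading} the bijection restricts to a bijection between $\{Y_{i,p} : (i,p) \in \widehat{I}_Q\}$ and $\Phi_+$. Since the renormalized $q$-character of a simple module $L(\mathfrak{m}) \in \CxiZ$ is a polynomial in the $A_{i,p+1}^{-1}$ with constant term $1$, the truncated $q$-character of any simple module in $\CQ$ is a $\mathbb{Z}$-linear combination of monomials of the form $\mathfrak{m} \cdot \prod A_{i,p+1}^{-1}$ where $\mathfrak{m}$ involves only variables $Y_{i,p}$ with $(i,p) \in \widehat{I}_Q$, and the truncation condition $p \leq \xi(i)$ forces every $Y$-variable occurring in any such monomial to lie in the subtorus indexed by $\{(i,p) : \varphi(i,p) \in \{1,\ldots,N\}\}$. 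Thus $\tchi_q(K_0(\CQ))$ is contained in the subring $\mathbb{C}[Y_{i,p}^{\pm 1} : \varphi(i,p) \in \{1,\ldots,N\}]$.

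Next I would verify that the old and new definitions of $\tdxi$ coincide on this subring. For any $Y_{i,p}$ with $\varphi(i,p) \in \{1,\ldots,N\}$, the pairs $(j,s)$ contributing to the product defining $\tdxi(Y_{i,p})$ (those for which $\tilde{C}_{i,j}(s-p-1) - \tilde{C}_{i,j}(s-p+1) \neq 0$) satisfy $p \leq s < p + h$ and $\langle \beta_{i,p}, \beta_{j,s} \rangle_Q \neq 0$, arguing as in Lemma~\ref{lem : cone of morphisms}. Under the bijection \eqref{eq : grading} all such $(j,s)$ also satisfy $\varphi(j,s) \in \{1,\ldots,N\}$, hence $\beta_{j,s}$ is a positive root and the sign discrepancy between the two definitions disappears. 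Therefore on the subtorus in question, the new $\tdxi$ coincides with the version of the map used in \cite{CasbiLi}.

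Finally I would invoke the main result of \cite{CasbiLi}: with the original definition of $\tdxi$, the restriction of $\tdxi \circ \tchi_q$ to $\mathbb{C} \otimes K_0(\CQ)$ coincides with $\barD \circ \iota_Q$. Combined with the previous step, this yields the claimed equality for the present definition of $\tdxi$. The only substantive point in this argument is the verification that negative-root factors cannot appear when the variable $Y_{i,p}$ itself corresponds to a positive root; this is what makes the reduction work, and is essentially the content of Remark~\ref{rk : remark on new Dtilde}. No T-system or cluster-theoretic input beyond what is used in \cite{CasbiLi} is required, so the main obstacle, if any, is to be careful about the precise index set $\widehat{I}_Q$ and the sign conventions rather than to introduce new ideas.
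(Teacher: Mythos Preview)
Your proposal is correct and follows exactly the paper's approach: this theorem is not given an independent proof here but is recalled from \cite{CasbiLi}, with Remark~\ref{rk : remark on new Dtilde} supplying the one additional observation needed, namely that on the subtorus generated by the $Y_{i,p}$ with $\varphi(i,p)\in\{1,\ldots,N\}$ (which contains $\tchi_q(K_0(\CQ))$), the new sign convention for the $\beta_{j,s}$ agrees with the old one. Your write-up is in fact more detailed than the paper's, which compresses the entire reduction into that remark.
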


 The following lemmas will also  be useful later. Recall the notation $X_{i,p}$ from~\eqref{eq : initial KRs}.
   
 \begin{lemma}[\cite{CasbiLi}, Lemma 6.2] \label{lem : Dtilde of initial KRs}
    For any $(i,p) \in \IxiZ$, we have that 
 $$ \left( \tdxi \circ \tchi_q \right) \left( [  X_{i,p} ] \right) = \prod_{(j,s) \in \Ixi} \beta_{j,s}^{ -\tilde{C}_{i,j}(s-p+1)} . $$
  \end{lemma}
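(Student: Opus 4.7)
The plan is to prove the identity by a direct monomial computation, following the strategy of \cite{CasbiLi}, Lemma 6.2. Since the present definition of $\tdxi$ differs from the one in \emph{loc.~cit.}\ only by signs on factors indexed by negative roots, and since for $(i,p)\in\IxiZ$ the roots $\beta_{j,s}$ that actually contribute to $\tdxi(\tchi_q([X_{i,p}]))$ are all positive (cf.\ Remark~\ref{rk : remark on new Dtilde}), the computation from \emph{loc.~cit.}\ carries over unchanged. I nonetheless outline how I would structure the proof from scratch.

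First, I would write $\tchi_q(X_{i,p}) = \mathfrak{m}\cdot P$ where $\mathfrak{m} := Y_{i,p}Y_{i,p+2}\cdots Y_{i,\xi(i)}$ is the dominant monomial and $P$ is a polynomial in the variables $A_{j,s+1}^{-1}$ with constant term $1$ (this is the usual renormalized form of the $q$-character of a simple module, composed with the truncation). Applying the defining formula of $\tdxi$ to each factor $Y_{i,p+2(k-1)}$ of $\mathfrak{m}$ and telescoping the exponents yields
$$
\tdxi(\mathfrak{m}) \;=\; \prod_{(j,s)\in\Ixi} \beta_{j,s}^{\widetilde{C}_{i,j}(s-\xi(i)-1)\,-\,\widetilde{C}_{i,j}(s-p+1)}.
$$
Comparing this with the target formula, the lemma reduces to the identity
$$
\tdxi(P) \;=\; \prod_{(j,s)\in\Ixi}\beta_{j,s}^{-\widetilde{C}_{i,j}(s-\xi(i)-1)}.
$$

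Next, using Corollary~\ref{coro : Dtilde of A inverse}, each generator $A_{j,s+1}^{-1}$ contributes the ratio $\beta_{j,s}/\beta_{j,s+2}$ under $\tdxi$, so $\tdxi(P)$ becomes a finite sum of products of such ratios, indexed by the non-dominant monomials of $\tchi_q(X_{i,p})$. I would then parametrize these monomials explicitly (via the Frenkel-Mukhin algorithm in general, or via Nakajima's tableau/monomial model in type $A$) and collapse the resulting sum using Proposition~\ref{prop : reminders on Ctilde}, which identifies the relevant $\widetilde{C}_{i,j}$'s with Euler forms in $\mathrm{Rep}Q$, together with classical Auslander-Reiten identities, most notably the mesh relations underlying the grading~\eqref{eq : grading}.

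The main obstacle is precisely this combinatorial collapse, which is the technical heart of \cite{CasbiLi}, Lemma 6.2. As a backup route, one could instead induct on the length $k = (\xi(i)-p+2)/2$: the T-system relations of Section~\ref{sec : KR modules} provide the inductive step, while the fundamental modules $X_{i,\xi(i)}$ serve as the base case. This inductive approach, however, requires careful bookkeeping of truncation boundaries to ensure that all intermediate KR modules remain in $\CxiZ$, so the direct computation above is ultimately cleaner.
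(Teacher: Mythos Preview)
Your first paragraph is exactly what the paper does: the lemma is stated with a citation to \cite{CasbiLi}, Lemma~6.2, and no independent proof is given. Your observation that the sign modification in Remark~\ref{rk : remark on new Dtilde} is immaterial here is correct, since the identity is purely formal in the symbols $\beta_{j,s}$ and holds verbatim whether one reads them as signed or unsigned roots.

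Your ``from scratch'' outline, however, misses the crucial simplification that makes this lemma easy rather than hard. For the \emph{initial} KR modules $X_{i,p}$ (those with highest spectral parameter equal to $\xi(i)$), the truncated $q$-character is precisely the dominant monomial $\mathfrak{m}$ itself: every non-dominant monomial of $\chi_q(X_{i,p})$ contains the factor $A_{i,\xi(i)+1}^{-1}$, which introduces $Y_{i,\xi(i)+2}^{-1}$ and is therefore killed by the truncation. Thus your $P$ is identically $1$, and there is no combinatorial collapse to perform. Equivalently, the boundary term $\prod_{(j,s)\in\Ixi}\beta_{j,s}^{\widetilde{C}_{i,j}(s-\xi(i)-1)}$ that your telescoping leaves over is trivially $1$: for $(j,s)\in\Ixi$ one has $s-\xi(i)-1 \le \xi(j)-\xi(i)-1 \le d(i,j)-1$, and $\widetilde{C}_{i,j}(m)=0$ whenever $m \le d(i,j)$. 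So the Frenkel--Mukhin or tableau analysis you propose, as well as the inductive backup route via T-systems, are both unnecessary; the lemma follows immediately from your telescoping step once one notes $P=1$.
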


  \begin{lemma}[\cite{CasbiLi} Proposition 6.5] \label{lem : relation B}
   Fix a reduced expression $\mathbf{i} = (i_1, \ldots , i_N)$ of $w_0$ adapted to $Q$ and $\widehat{\mathbf{i}}$ denote the corresponding infinite word. Let $(i,p) \in \IxiZ$ and set $t := \varphi(i,p)$. Then denoting $x_t := [X_{i,p}]$, we have
   $$  ( \tdxi \circ \tchi_q) (x_tx_{t_-}) = \beta_{i,p}^{-1}  \left( \tdxi \circ \tchi_q \right) \left( \prod_{ \substack{ u < t < u_+ \\ i_u \sim i_t}} x_u \right) .  $$
  \end{lemma}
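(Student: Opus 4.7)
The plan is to expand both sides using the explicit formula of Lemma~\ref{lem : Dtilde of initial KRs} and reduce the identity to the defining recursion of the inverse quantum Cartan matrix.

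First, since $\varphi(i,p+2) = \varphi(i,p)_{-} = t_-$, we have $x_{t_-} = [X_{i,p+2}]$. Applying Lemma~\ref{lem : Dtilde of initial KRs} to each of $[X_{i,p}]$ and $[X_{i,p+2}]$ and multiplying, I obtain
\begin{equation*}
(\tdxi \circ \tchi_q)(x_t x_{t_-}) = \prod_{(k,r) \in \IxiZ} \beta_{k,r}^{-\tilde{C}_{i,k}(r-p+1) - \tilde{C}_{i,k}(r-p-1)}.
\end{equation*}

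Next I identify, for each $j \sim i$, the unique $u$ with $i_u = j$ and $u < t < u_+$. First, $(j,p+1) \in \IxiZ$: the existence of $x_{t_-}$ forces $p \leq \xi(i)-2$, and since $|\xi(i) - \xi(j)| = 1$ for $j \sim i$ this yields $p+1 \leq \xi(j)$. Independently of the orientation of $Q$, the AR mesh at $(i,p)$ has $(j,p+1)$ as an incoming neighbor and $(j,p-1)$ as an outgoing neighbor, so any linear extension of the AR partial order — in particular $\varphi$ — satisfies
\[
\varphi(j,p+1) < t < \varphi(j,p-1) = \varphi(j,p+1)_+.
\]
Hence $u = \varphi(j,p+1)$ and $x_u = [X_{j,p+1}]$. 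Applying Lemma~\ref{lem : Dtilde of initial KRs} once more to each such $x_u$ and taking the product over $j \sim i$,
\begin{equation*}
(\tdxi \circ \tchi_q)\!\left( \prod_{\substack{u<t<u_+ \\ i_u \sim i_t}} x_u \right) = \prod_{(k,r) \in \IxiZ} \beta_{k,r}^{-\sum_{j \sim i} \tilde{C}_{j,k}(r-p)}.
\end{equation*}

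Comparing the two displays, the desired identity reduces to the pointwise assertion
\[
\tilde{C}_{i,k}(r-p+1) + \tilde{C}_{i,k}(r-p-1) - \sum_{j \sim i} \tilde{C}_{j,k}(r-p) = \delta_{i,k}\,\delta_{r,p}
\]
for all $(k,r) \in \IxiZ$, the Kronecker delta contributing precisely the extra factor $\beta_{i,p}^{-1}$ on the right-hand side of the lemma. But this is exactly the coefficient of $z^{r-p}$ in the matrix identity $C(z)\tilde{C}(z) = I$, whose $(i,k)$-entry reads $(z+z^{-1})\tilde{C}_{i,k}(z) - \sum_{j \sim i} \tilde{C}_{j,k}(z) = \delta_{i,k}$, so the proof concludes. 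The main obstacle is the combinatorial step in the second paragraph — confirming that the unique relevant index $u$ always corresponds to the pair $(j,p+1)$ regardless of the orientation of the edge between $i$ and $j$ in $Q$ — after which the remainder is a formal manipulation of the inverse quantum Cartan recursion.
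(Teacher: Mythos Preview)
The paper does not prove this lemma; it is quoted from \cite{CasbiLi}, Proposition~6.5, so there is no in-paper argument to compare against. Your proof is correct and is the natural one: once the identifications $x_{t_-}=[X_{i,p+2}]$ and $x_u=[X_{j,p+1}]$ (for each $j\sim i$) are made, Lemma~\ref{lem : Dtilde of initial KRs} turns both sides into products of $\beta_{k,r}$'s whose exponents differ by exactly $\tilde C_{i,k}(m+1)+\tilde C_{i,k}(m-1)-\sum_{j\sim i}\tilde C_{j,k}(m)$ with $m=r-p$, and this equals $\delta_{i,k}\delta_{m,0}$ by the defining relation $C(z)\tilde C(z)=I$ (read coefficientwise, using the convention $\tilde C_{i,k}(m)=0$ for $m\leq 0$). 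The combinatorial step you flag as the main obstacle is indeed the only nontrivial point, and your justification is the right one: the bijection $\varphi$ attached to an adapted reduced expression is a linear extension of the partial order on the repetition quiver $\IZ$, so each mesh at $(i,p)$ forces the interleaving $\varphi(j,p+1)<\varphi(i,p)<\varphi(j,p-1)=\varphi(j,p+1)_+$ regardless of how the edge between $i$ and $j$ is oriented in $Q$.
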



   \begin{remark}
  Assume $Q$ is the orientation $Q_0$ considered in \cite{CasbiLi} Section 7, then denoting $\td := \td_{\xi_0}$ where $\xi_0$ is a height function adapted to $Q_0$, the statement of Lemma~\ref{lem : relation B} can be restated as follows:
\begin{equation} \label{eq : relation B for initial KRs}
 \forall (i,p) \in \IxiZ, \enspace   \left( \tdxi \circ \tchi_q  \right) \left( [X_{i,p}] [X_{i,p+2}] \right) = \beta_{i,p}^{-1} \td \left( \prod_{j \sim i} [X_{j,p+1}] \right) .  
 \end{equation}
 \end{remark}




 \section{Duality between Kirillov--Reshetikhin modules}
  \label{sec : duality}

This section is devoted to the key result allowing to prove the main result of this paper. It consists in exhibiting a certain duality between the values taken  on certain Kirillov--Reshetikhin modules by the composition  $\tdxi \circ \tchi_q$ when $\xi$ (as well as the corresponding truncation $\tchi_q$) is respectively adapted to a Dynkin quiver $Q$ and its dual $Q^{*}$. 

In what follows, we will sometimes write $\barD(M)$ instead of $\barD([M])$. 

 \subsection{Preliminaries and statement of the main result}
  \label{sec : prelim and statement of duality}

 Recall that the truncation of the $q$-character heavily depends on the choice of height function. As there will be two height functions ($\xi$ and $\xistar$) involved in the following sections, we will use the notation $\tchi_q^{*}$ to refer to the truncation of the $q$-character morphism given by the height function $\xistar$, in contrast with $\tchi_q$ which is the truncation given by $\xi$. Assume $Q = (Q_{0}^{op})^{*}$ where $Q_0$ is the monotonic orientation defined as in Section 7 in \cite{CasbiLi}  and let us fix a height function $\xi$ adapted to $Q$. Let $\barD , \barDstar : K_0(\CZ) \longrightarrow \mathbb{C}(\mathfrak{t}^{*})$ denote the maps defined as follows:
  $$ \barD (L(Y_{i,p})) := 
  \begin{cases}
   \td_{\xi}( \tchi_q(L(Y_{i,p}))), & \text{if $p \leq \xi(i)$,} \\
   0, & \text{otherwise,}
   \end{cases}
   $$
  $$ \barD^{*} (L(Y_{j,s})) := 
  \begin{cases}
  \left( \psi \circ  \td_{\xi^{*}} \right)( \tchi_q^{*}(L(Y_{j,s}))), & \text{if $s \leq \xi^{*}(j)$,} \\
   0, & \text{otherwise.}
   \end{cases}
   $$
   Here $\psi$ denotes the automorphism of $\mathbb{C}(\mathfrak{t}^{*})$ given by 
 $$ \forall i \in I, \qquad  \psi(\alpha_i) := \alpha_{i^{*}} .  $$
 The reason for this choice of notation is given by Theorem~\ref{thm : thm CasbiLi}: we can view  $\tdxi \circ \tchi_q$ as an extension to a larger domain of Baumann--Kamnitzer--Knutson's morphism $\barD$. Hence by convenience, we still denote by $\barD$ the map defined above on the Grothendieck ring of $\CZ$. 
Recall the notation $d_i$ from (\ref{eq : def of di}). Finally, set $[X_{i,p}^{(0)}] := 1$ for any $(i,p) \in \IZ$. The main result established in this section is the following:
 \begin{theorem} \label{thm : duality between KRs}
  Fix $Q$ and $\xi$ as above and let $i \in I, 0 \leq k < h$ and $p$ such that $\xi(i)-2h+2 \leq p \leq \xi(i)$. Denote $\zeta := (\xi(i)-p-2k+2)/2$. Then we have that
\begin{equation} \label{eq : thm duality between KRs}
    \barD \left( X_{i,p}^{(k)} \right)  = \epsilon_{i,p}^{(k)} \barDstar \left(  X_{i^{*}, -p-2h+4}^{(h-k)}  \right)  
\end{equation}
where $\epsilon_{i,p}^{(k)} \in \{\pm 1\}$ is given by 
$$\epsilon_{i,p}^{(k)}:= 
\begin{cases}
   (-1)^{d_{i+\zeta}-d_{\zeta}} & \text{if $\mathfrak{g}$ is of type $A_n$,} \\
    (-1)^{d_{n-1}+d_n-d_{i+\zeta-n+1}-d_\zeta} & \text{if $\mathfrak{g}$ is of type $D_n$ and $i \leq n-2$,} \\
    (-1)^{d_{(\sigma^{\zeta})(i)}-d_{\zeta}} & \text{if $\mathfrak{g}$ is of type $D_n$ and $i \in \{n-1,n\}$,  }
    \end{cases}
 $$ 
where $\sigma: I \to I$ is the involution of type $D_n$ Dynkin diagram given by $\sigma(i)=i$ for $i \in [n-2]$, $\sigma(n)=n-1$, $\sigma(n-1)=n$. 
\end{theorem}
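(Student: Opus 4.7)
The plan is to establish the duality by induction, starting from the initial Kirillov--Reshetikhin modules (for which an explicit formula is available from Lemma~\ref{lem : Dtilde of initial KRs}) and propagating to all other KR modules in the allowed window via the T-systems recalled in Section~\ref{sec : KR modules}.

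First, I would establish the base case. The initial KR modules $X_{i,p}$ defined in~\eqref{eq : initial KRs} correspond to $(i,p) \in \IxiZ$ together with $k = (\xi(i)-p+2)/2$, equivalently $l=0$ in the notation of the theorem. A direct check using the material of Section~\ref{sec : dual AR} shows that under the involution $(i,p,k) \mapsto (i^{*},-p-2h+4,h-k)$, these are sent to initial KR modules on the dual side, relative to $\xi^{*}$. One may therefore apply Lemma~\ref{lem : Dtilde of initial KRs} to both sides, obtaining
\[ \barD(X_{i,p}) = \prod_{(j,s) \in \Ixi} \beta_{j,s}^{-\widetilde{C}_{i,j}(s-p+1)} \]
and an analogous formula for $\barDstar$ on the dual. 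The two expressions are then compared using Lemma~\ref{lem : duality on roots} (which gives $w_0 \beta_{j,s} = -\beta_{j,-s-h+2}^{*}$), Lemma~\ref{lem : Euler forms} (invariance of Euler forms under $w_0$), and Proposition~\ref{prop : reminders on Ctilde} (identification of $\widetilde{C}$-coefficients with Euler forms). Each root-factor whose sign must be extracted in this comparison contributes a $-1$, and one verifies that the total sign matches the formula for $\epsilon_{i,p}^{(k)}$ at $l=0$ in both types $A_n$ and $D_n$.

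For the inductive step, I would apply the T-system
\[ [X_{i,p}^{(k)}][X_{i,p+2}^{(k)}] = [X_{i,p}^{(k+1)}][X_{i,p+2}^{(k-1)}] + \prod_{j \sim i} [X_{j,p+1}^{(k)}] \]
together with its image under the involution above. Since $\barD$ and $\barDstar$ are ring homomorphisms, applying them to the two T-systems produces a system in which any four known instances of~\eqref{eq : thm duality between KRs} force the fifth. The essential observation is that the involution sends the five terms of the original T-system bijectively and consistently to the five terms of the dual T-system. Consequently, the induction propagates the duality from the $l=0$ base case throughout the entire window $1\le k < h$, $\xi(i)-2h+2 \le p \le \xi(i)$, provided the signs satisfy the telescoping compatibility
\[ \epsilon_{i,p}^{(k)}\,\epsilon_{i,p+2}^{(k)} \;=\; \epsilon_{i,p}^{(k+1)}\,\epsilon_{i,p+2}^{(k-1)} \;=\; \prod_{j \sim i} \epsilon_{j,p+1}^{(k)}. \]

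The principal obstacle is the careful verification of the sign formula for $\epsilon_{i,p}^{(k)}$, both in the base case and in the telescoping identity above. The computation is type-dependent and, in type $D_n$, splits further according to whether $i$ is a spine vertex ($i \le n-2$) or a tail vertex ($i \in \{n-1,n\}$); the latter subcase is sensitive to the parity of $n$ through the non-triviality of the involution $\ast$ on these vertices. A secondary difficulty lies at the boundary of the window: T-system moves can temporarily push indices outside $\xi(i)-2h+2\le p \le \xi(i)$, in which case one must invoke the periodicity of $\tdxi$ established in Corollary~\ref{coro : periodicity of Ys} and its analogue for $\widetilde{D}_{\xi^{*}}$ to return to the prescribed range.
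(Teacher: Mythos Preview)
Your overall plan—base case on initial KR modules via Lemma~\ref{lem : Dtilde of initial KRs} and the duality lemmas of Section~\ref{sec : dual AR}, then propagation by T-systems—matches the paper's strategy, and your base case is essentially Proposition~\ref{prop : duality for initial KRs}. But there is a genuine gap in the inductive step.

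The T-system at $(i,p,k)$ involves the term $X_{i,p+2}^{(k-1)}$, and when $k=1$ this is the trivial module, with $\barD$-value $1$. Under your involution this becomes $X_{i^{*},\eta(p+2)}^{(h)}$, a Kirillov--Reshetikhin module of length $h$ on the $Q^{*}$-side. For your induction to proceed you must know that
\[
\barDstar\bigl(X_{i^{*},\eta(p+2)}^{(h)}\bigr)=\epsilon_{i,p+2}^{(0)}\in\{\pm 1\},
\]
and this is far from obvious: it is the content of Proposition~\ref{prop : Dbarstar of KRs in Sigma2} and occupies the whole of Section~\ref{sec : KR whose deg is h} (Lemma~\ref{lem : formula for KR modules in Kxistar} through Corollary~\ref{cor : vanishing property}), where it is established by an independent inductive computation using mesh relations and the identities of Lemma~\ref{lem : relation B}. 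You do not mention this $k$-boundary at all; your ``secondary difficulty'' concerns only the $p$-window and invokes periodicity, which does not help here. Even in type $A_1$ (where $h=2$), obtaining $X_{1,\xi(1)-2}^{(1)}$ from the initial seed already forces you through a $k=0$ term whose dual has length $h$.

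A second, smaller issue: your claim that ``any four known instances force the fifth'' presupposes that the relevant $\barDstar$-values are nonzero so one can divide. The paper avoids this by introducing formal variables $x_1,\ldots,x_n$ for the length-$h$ classes, showing that renormalizations ${D'}_{j,s}^{(k)}$ satisfy the dual T-system (Proposition~\ref{prop : T-systems}), and then invoking the Laurent phenomenon of \cite[Theorem~3.1]{HL16} before specializing $x_i=(-1)^{d_i}$. Nonvanishing is handled separately (Lemma~\ref{lem : boldD not zero}). Your direct-division approach could be made to work, but only after you have supplied the missing $\Sigma_2$ computation and a nonvanishing argument.
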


Given  a simple Lie algebra $\mathfrak{g}$  of type $A_n , n\geq 1$ or $D_n, n \geq 4$ and denoting by $I$ the set of vertices of the corresponding Dynkin diagram, we will say that $i \in I$ is spin if $\mathfrak{g}$ is of type $D_n$ and 
$i \in \{n-1,n\}$ and that $i$ is generic (resp. strictly generic)  if $\mathfrak{g}$ is of type $A_n$ or if $\mathfrak{g}$ is of type $D_n$ and $i \leq n-2$ (resp. $i<n-2$). Furthermore, in all what follows we will use the following notations. Let $\eta : \mathbb{Z} \longrightarrow \mathbb{Z}$ denote the map 
$$\eta  : s \longmapsto -s-2h+4 . $$
Note that $\eta$ induces a bijection 
$$ \eta : \{ s  , \xi(i^{*})-h \geq s > \xi(i)-2h+2 \} \longrightarrow \{ s' ,  \xi^{*}(i^{*}) \geq s' > \xi^{*}(i)-h+2 \} .   $$
 We also define a map $\rho : \{ 0, \ldots , h\} \longrightarrow \{ 0, \ldots , h \}$ by setting
$$ \rho : k \longmapsto h-k . $$
We denote by $\barCQ$ the smallest full monoidal subcategory of $\CZ$ containing  the fundamental representations $L(Y_{i,p}) , i \in I , \xi(i) \geq p \geq \xi(i)-2h+2$. Note that $\barCQ$ contains $\CQ$ as a full monoidal subcategory.
  We moreover introduce an algebra $\mathcal{A}_{\xi}$ which should be regarded as a dual of $K_0(\barCQ)$, in the sense that it is generated by classes of KR modules obtained from those of $\barCQ$ by performing the substitutions $(i,p,k) \mapsto (i^{*}, \eta(p), \rho(k))$. More precisely, we define $\mathcal{A}_{\xi}$ as the sub-algebra of the fraction field of $K_0(\mathcal{C}_{\ZZ})$ generated by the classes of  
  $$ \Sigma = \Sigma_1 \sqcup \Sigma_2 $$
  where 
  $$ \Sigma_1 := \{ [X_{j,s}^{(k)}]^{\pm 1} ,\xistar(j) \geq s > \xistar(j)-2h+2 , k = (\xistar(j)-s+2)/2 \}  $$
 and
 $$\Sigma_2 := \{ [X_{j,s}^{(h)}]^{\pm 1} ,\xistar(j) \geq s > \xistar(j)-2h+2  \}.  $$
 
The strategy of the proof is the following. First, we show that~\eqref{eq : thm duality between KRs} holds for the KR modules belonging to the initial seed, i.e.\ in the case $k \geq 1$ and $p = \xi(i)-2k+2$. This is what is done in Section~\ref{sec : duality for initials KRs}. Then, still considering $p= \xi(i)-2k+2$, we show that the desired equality holds when $k=0$. This amounts to prove that $ \barDstar \left( X_{i^{*}, \xi^{*}(i^{*}) -2h + 4}^{(h)}  \right) = \pm 1$, which will be the purpose of Section~\ref{sec : KR whose deg is h}. Finally, we will conclude using the T-systems in $K_0(\barCQ)$ (resp. $\mathcal{A}_{\xi}$).

\subsection{Classes of KR modules in \texorpdfstring{$\Sigma_1$}{Sigma1}}
 \label{sec : duality for initials KRs}

 In this subsection, we establish the following special case of Theorem~\ref{thm : duality between KRs} which deals with classes of the subset $\Sigma_1$. Note that it is valid for any orientation $Q$ of the Dynkin diagram of $\mathfrak{g}$. 

  \begin{proposition} \label{prop : duality for initial KRs}
 Let $i \in I$, $k \geq 1$ and let $p := \xi(i)-2k+2$. Then one has
   $$ \barD \left(  X_{i,p}^{(k)} \right) = (-1)^{d_i} \barDstar \left( X_{i^{*}, \eta(p)}^{(\rho(k))}  \right) . $$
  \end{proposition}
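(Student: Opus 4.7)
The plan is to apply Lemma~\ref{lem : Dtilde of initial KRs} on both sides of the identity. Since $p = \xi(i)-2k+2$, the module $X_{i,p}^{(k)}$ is exactly the initial Kirillov--Reshetikhin module $X_{i,p}$ of~\eqref{eq : initial KRs}; likewise, a direct verification using $\xi^{*}(i^{*})=-\xi(i)$ and $\eta(p)=-p-2h+4$ shows that $(\xi^{*}(i^{*})-\eta(p)+2)/2 = h-k = \rho(k)$, so $X_{i^{*},\eta(p)}^{(\rho(k))}$ is the initial KR module at $(i^{*},\eta(p))$ with respect to the dual height function $\xi^{*}$. Lemma~\ref{lem : Dtilde of initial KRs} therefore yields the explicit product formulas
$$\barD\bigl([X_{i,p}^{(k)}]\bigr) = \prod_{(j,s)\in \Ixi} \beta_{j,s}^{-\tilde{C}_{i,j}(s-p+1)}, \qquad \widetilde{D}_{\xi^{*}}\bigl(\tchi_{q}^{*}[X_{i^{*},\eta(p)}^{(\rho(k))}]\bigr) = \prod_{(j,s)\in \widehat{I}^{\leq \xi^{*}}_{\mathbb{Z}}} (\beta_{j,s}^{*})^{-\tilde{C}_{i^{*},j}(s-\eta(p)+1)}.$$

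Next, apply $\psi$ to the second formula. Because $\psi(\alpha_i) = \alpha_{i^{*}} = -w_{0}(\alpha_{i})$, the map $\psi$ coincides with $-w_{0}$ on the root lattice, so Lemma~\ref{lem : duality on roots} translates into the coordinate-change identity $\psi(\beta_{j,s}^{*}) = \beta_{j,-s-h+2}$. Combined with the identity $\tilde{C}_{i^{*},j^{*}}(m) = \tilde{C}_{i,j}(m)$ (which holds because the involution $*$ is a Dynkin automorphism commuting with $C(z)$ and hence with its inverse), the substitution $s \mapsto -s-h+2$ converts $\barDstar\bigl([X_{i^{*},\eta(p)}^{(\rho(k))}]\bigr)$ into a product of unstarred $\beta_{j,s'}$ indexed by $\{(j,s') : s' \geq \xi(j^{*})-h+2\}$ with exponents $-\tilde{C}_{i^{*},j}(p+h-1-s')$.

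The final step is to compare this transformed product with the left-hand side. Using Proposition~\ref{prop : reminders on Ctilde} to recast the exponents in both products as Euler-form values, a shift identity on $\tilde{C}_{i,j}$ --- coming from the Serre duality $S = \tau[1]$ on $\DbRepQ$ exactly as in the proof of Lemma~\ref{lem : cone of morphisms} --- allows one to merge the complementary index ranges $[p,\xi(j)]$ and $[\xi(j^{*})-h+2,\, p+h-2]$ into a single Coxeter window of width $h$ for each $j$. After this matching, the residual defect between the two products collapses precisely to $\widetilde{D}_{\xi}\bigl(Y_{i,\xi(i)-2h+2}\cdots Y_{i,\xi(i)}\bigr)$, which by Proposition~\ref{prop : prod of Ys equal to pm 1} evaluates to $(-1)^{d_{i}}$, yielding the desired sign.

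The main obstacle I anticipate is in this final comparison step: one must precisely verify how the two half-infinite index sets $\Ixi$ and (the transformed) $\widehat{I}^{\leq \xi^{*}}_{\mathbb{Z}}$ glue, after the change of variables, into a full Coxeter window, and how the two families of $\tilde{C}$-exponents assemble into those used in Proposition~\ref{prop : prod of Ys equal to pm 1}. A minor but necessary bookkeeping point is parity consistency between $(j,s)\in\IZ$ and $(j,-s-h+2)\in\IZ$, which is automatic when $h$ is even and requires an appropriate shift of $\xi^{*}$ in the few types (e.g.\ $A_{2n}$) where $*$ does not preserve the bipartition of $I$; this handles the remaining parity cases without affecting the conceptual flow of the argument.
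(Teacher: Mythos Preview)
Your overall strategy---apply Lemma~\ref{lem : Dtilde of initial KRs} on both sides, transport the $\xi^{*}$-side to $\xi$-roots via $\psi = -w_{0}$ and Lemma~\ref{lem : duality on roots}, and then compare the two resulting products of $\beta_{j,s}$---is correct and is essentially the approach the paper takes. The paper organises it slightly differently: rather than computing both sides and forming a quotient, it works entirely on the $\barD$ side, splitting the index range $\{p\le s\le \xi(j)\}$ into the three pieces $\{p\le s\le \xi(j^{*})-h\}$, $\{\xi(j^{*})-h<s<p+h\}$, $\{p+h\le s\le \xi(j)\}$, pairs off the outer two via $(j,s)\leftrightarrow(j^{*},s+h)$ to produce a first sign $(-1)^{d'_{i}}$, and then converts only the middle piece into $\barDstar$ (picking up a second sign $(-1)^{d''_{i}}$). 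The final step is $d'_{i}+d''_{i}=d_{i}$, obtained directly from Lemma~\ref{lem : cone of morphisms}.

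The one genuine gap in your proposal is the assertion that the residual defect ``collapses precisely to $\tdxi(Y_{i,\xi(i)-2h+2}\cdots Y_{i,\xi(i)})$''. This is not how the sign actually arises, and it is not clear that the ratio of the two products literally takes that form before simplification. The paper never invokes Proposition~\ref{prop : prod of Ys equal to pm 1} here; the sign comes from two separate counts $d'_{i}$ and $d''_{i}$ of pairs $(j,s)$ with nonzero Euler pairing against $\beta_{i,p}$, whose sum is identified with $d_{i}$ via the cone-of-morphisms count. If you want to carry out your version, you should abandon the appeal to Proposition~\ref{prop : prod of Ys equal to pm 1} and instead match the exponents on the overlap of the two ranges directly (using the Auslander--Reiten relation $\langle\beta,\gamma\rangle_{Q}=-\langle\tau^{-1}\gamma,\beta\rangle_{Q}$ to flip the direction of the Euler form), then count the signs on the symmetric difference exactly as the paper does. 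Your anticipated ``main obstacle'' is real, and the three-piece splitting is what dissolves it.
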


\begin{proof}
 Recall that by Lemma~\ref{lem : Dtilde of initial KRs} we have
 $$ \barD \left( X_{i,p}^{(k)} \right) = \prod_{ j \in I , s \leq \xi(j)} \beta_{j,s}^{-\tilde{C}_{i,j}(s-p+1)}  $$
with $p := \xi(i)-2k+2$. Moreover, $\tilde{C}_{i,j}(m)=0$ if $m \leq 0$ so that the product runs in fact only on the set $\{ (j,s) , j \in I ,  p \leq s \leq \xi(j) \}$. We split this set into three disjoint subsets as follows:
$$\{ p \leq s \leq \xi(j) \} = \{ p \leq s \leq \xi(j^{*})-h \} \sqcup \{ \xi(j^{*})-h  < s < p+h \}  \sqcup \{ p+h \leq s \leq \xi(j) \} . $$
There is an obvious bijection between the first and the third subset, given by $(j,s) \longmapsto (j^{*},s+h)$. Hence using Proposition~\ref{prop : reminders on Ctilde}, the product over the first and the third subsets can be written as 
\begin{align*}
\prod_{ \substack{j \in I \\ p \leq s \leq \xi(j^{*})-h }} \beta_{j,s}^{-\langle \beta_{i,p} , \beta_{j,s} \rangle} \beta_{j^{*},s+h}^{-\langle \beta_{i,p} , \beta_{j^{*},s+h} \rangle } = \prod_{ \substack{ j \in I \\ p \leq s \leq \xi(j^{*})-h}} \beta_{j,s}^{-\langle \beta_{i,p} , \beta_{j,s} \rangle}  \left( -\beta_{j,s} \right)^{  \langle \beta_{i,p} , \beta_{j,s} \rangle } = (-1)^{d'_i}
\end{align*}
where 
$$ d'_i := \sum_{(j,s) , p \leq s \leq \xi(j^{*})-h}   \langle \beta_{i,p} , \beta_{j,s} \rangle_Q . $$
Thus we have 
$$ \barD \left( X_{i,p}^{(k)} \right) = (-1)^{d'_i} \prod_{ \substack{ j \in I \\ \xi(j^{*})-h<s <p+h}} \beta_{j,s}^{-\tilde{C}_{i,j}(s-p+1)} .  $$
Now we can rewrite this expression using Lemma~\ref{lem : duality on roots} as follows.
\begin{align*}
  \barD \left( X_{i,p}^{(k)} \right) &= (-1)^{d'_i} \prod_{\xi(j^{*})-h<s <p+h} \beta_{j,s}^{ -\langle \beta_{i,p} , \beta_{j,s} \rangle_Q }  \\
  &= (-1)^{d'_i} \prod_{\xi(j^{*})-h<s <p+h} \left( - w_0 \beta_{j,-s-h+2}^{*} \right)^{- \langle \beta_{i,p} , \beta_{j,s} \rangle_Q  }  \\
  &= (-1)^{d'_i} \prod_{-p-2h+2<s<-\xi(j^{*})+2} \left( - w_0 \beta_{j,s}^{*} \right)^{- \langle \beta_{i,p} , \beta_{j,-s-h+2} \rangle_Q  }  \\
  &= (-1)^{d'_i} \prod_{-p-2h+2<s<-\xi(j^{*})+2} \left( - w_0 \beta_{j,s}^{*} \right)^{- \langle  w_0 \beta_{j,-s-h+2} , w_0 \beta_{i,p} \rangle_{Q^{*}} }  \quad \text{by Lemma~\ref{lem : Euler forms},} \\ 
  &= (-1)^{d'_i} \prod_{-p-2h+2<s<-\xi(j^{*})+2} \left( - w_0 \beta_{j,s}^{*} \right)^{- \langle   \beta_{j,s}^{*} , \beta_{i,-p-h+2}^{*} \rangle_{Q^{*}} } \\
  &=  (-1)^{d'_i+d''_i} \prod_{-p-2h+2<s<-\xi(j^{*})+2} \left( w_0 \beta_{j,s}^{*} \right)^{- \langle   \beta_{j,s}^{*} , \beta_{i,-p-h+2}^{*} \rangle_{Q^{*}} }
  \end{align*}
  where 
  $$ d''_i := \sum_{(j,s) , j \in I ,-p-2h+2<s<-\xi(j^{*})+2} \langle   \beta_{j,s}^{*} , \beta_{i,-p-h+2}^{*} \rangle_{Q^{*}} .  $$
  Then, using the Auslander--Reiten formula, we have that 
  $$\langle \beta_{j,s}^{*} , \beta_{i,-p-h+2}^{*} \rangle_{Q^{*}} = - \langle   \beta_{j,s}^{*} , \beta_{i^{*},-p-2h+2}^{*} \rangle_{Q^{*}}  = \langle  \beta_{i^{*},-p-2h+4}^{*} , \beta_{j,s}^{*} \rangle_{Q^{*}} = \langle \beta_{i^{*},\eta(p)}^{*} , \beta_{j,s}^{*} \rangle_{Q^{*}} . $$
  Therefore we obtain 
 \begin{align*}
\barD \left( X_{i,p}^{(k)} \right) &= (-1)^{d'_i+d''_i} \prod_{-p-2h+2<s<-\xi(j^{*})+2} \left( w_0 \beta_{j,s}^{*} \right)^{-\langle \beta_{i^{*},\eta(p)}^{*} , \beta_{j,s}^{*} \rangle_{Q^{*}}} \\
&= (-1)^{d'_i + d''_i} \prod_{\eta(p)  \leq s \leq \xi^{*}(j)} \left( w_0 \beta_{j,s}^{*} \right)^{-\langle \beta_{i^{*},\eta(p)}^{*} , \beta_{j,s}^{*} \rangle_{Q^{*}}} \\
&= (-1)^{d'_i + d''_i} \prod_{\eta(p)  \leq s \leq \xi^{*}(j)} \left( w_0 \beta_{j,s}^{*} \right)^{ -\tilde{C}_{i^{*},j}(s-\eta(p)+1)} 
\end{align*}
using again Proposition~\ref{prop : reminders on Ctilde}. Thus by Lemma~\ref{lem : Dtilde of initial KRs}, this is exactly 
$$\barD \left( X_{i,p}^{(k)} \right) = (-1)^{d'_i + d''_i} \barDstar \left(  X_{i^{*} , \eta(p)}^{(r)}   \right)  $$
where $r = (\xi^{*}(i^{*})-\eta(p)+2)/2$. First of all we have 
$$ r= (-\xi(i)+p+2h-2)/2 = h- (\xi(i)-p+2)/2 = h-k = \rho(k) . $$
Moreover, we claim that $d'_i+d''_i = d_i$. Indeed, the same arguments as above show that $d''_i$ can be rewritten as 
$$ d''_i = \sum_{(j,s) , j \in I , \xi(j^{*})-h < s < p+h} \langle \beta_{i,p} , \beta_{j,s} \rangle_Q .   $$
Thus we have 
$$ d'_i + d''_i = \sum_{(j,s) , j \in I , p \leq s < p+h} \langle \beta_{i,p} , \beta_{j,s} \rangle_Q = d_i $$
 by Lemma~\ref{lem : cone of morphisms} which concludes the proof of the Proposition. 
\end{proof}

\subsection{Classes of KR modules in \texorpdfstring{$\Sigma_2$}{Sigma2}} \label{sec : KR whose deg is h}

  In this subsection we investigate the values of $\barDstar$ on the classes in the subset $\Sigma_2$, assuming from now on that $Q$ is monotonic. We establish the following:

 \begin{proposition} \label{prop : Dbarstar of KRs in Sigma2}
     Let $(j,s)$ such that $[X_{j,s}^{(h)}] \in \Sigma_2$. Then we have that 
     $$ \barDstar \left( X_{j,s}^{(h)} \right) = \epsilon_{j^{*},\eta(s)}^{(0)}, 
     $$
 \end{proposition}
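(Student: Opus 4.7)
My approach combines Proposition~\ref{prop : duality for initial KRs} applied to the dual orientation $Q^{*}$ with the $T$-system recursion and the periodicity extension of $\td_{\xistar}$ from Corollary~\ref{coro : periodicity of Ys}. First I would establish the base case $l=0$, which corresponds to $s = \xistar(j)-2h+2$, a position lying just outside the range defining $\Sigma_2$. Applying Proposition~\ref{prop : duality for initial KRs} with the roles of $Q$ and $Q^{*}$ exchanged, taking $k = h$ and $p = \xistar(j)-2h+2$, and using that $\rho(h) = 0$ makes $[X_{j^{*},\eta(p)}^{(0)}]$ the unit of $K_0(\CZ)$, one directly obtains
\[
\barDstar \bigl( [X_{j,\xistar(j)-2h+2}^{(h)}] \bigr) = (-1)^{d_j^{*}},
\]
where $d_j^{*}$ denotes the Auslander-Reiten dimension associated to $Q^{*}$ at the vertex $j$.

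To handle $l \in \{1,\ldots,h-1\}$, I would propagate this identity using the $T$-system at level $h-1$,
\[
[X_{j,s-2}^{(h-1)}]\,[X_{j,s}^{(h-1)}] = [X_{j,s-2}^{(h)}]\,[X_{j,s}^{(h-2)}] + \prod_{i \sim j}[X_{i,s-1}^{(h-1)}],
\]
which expresses $[X_{j,s-2}^{(h)}]$ rationally in terms of classes of KR modules of shape $(h-1)$ and $(h-2)$. Applying the ring homomorphism $\barDstar$ and iteratively reducing to initial KR modules covered by Proposition~\ref{prop : duality for initial KRs}, with the non-initial modules shifted into the initial range via the periodicity of $\td_{\xistar}$, would provide a closed expression for $\barDstar([X_{j,s}^{(h)}])$ throughout $\Sigma_2$.

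The final step is to match the outcome of this recursion with the closed form $\epsilon_{j^{*},\eta(s)}^{(0)}$. In type $A_n$, this reduces to a telescoping identity among the Auslander-Reiten numbers $d_i$, yielding the claimed $(-1)^{d_{j^{*}+l} - d_l}$; in type $D_n$, the argument splits according to whether $j^{*}$ is a spinor vertex, and in the case $j^{*} \in \{n-1, n\}$ the diagram involution $\sigma$ on $\{n-1, n\}$ must be tracked carefully through each level of the iteration in order to recover $(-1)^{d_{\sigma^l(j^{*})}-d_l}$.

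The main obstacle will be precisely this sign book-keeping. Although the absolute value of $\barDstar([X_{j,s}^{(h)}])$ is controlled by Proposition~\ref{prop : prod of Ys equal to pm 1} combined with the periodicity of $\td_{\xistar}$, reconciling the signs produced by the subtraction in the $T$-system with the closed-form expression for $\epsilon_{j^{*},\eta(s)}^{(0)}$ requires a delicate case-by-case analysis. This is particularly intricate near the branching vertex in type $D$, where the non-symmetry of the Dynkin diagram and the structure of $*$ interact non-trivially with the iteration.
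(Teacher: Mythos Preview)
Your propagation step has a structural gap. The $T$-system at level $h-1$ expresses $[X_{j,s-2}^{(h)}]$ in terms of $[X_{j,s-2}^{(h-1)}]$, $[X_{j,s}^{(h-1)}]$, $[X_{j,s}^{(h-2)}]$ and the neighbouring $[X_{i,s-1}^{(h-1)}]$, but already at the first step $l=1$ (that is, $s = \xistar(j)-2h+6$) the factor $[X_{j,s}^{(h-1)}]$ has highest spectral parameter $\xistar(j)+2$ and is therefore \emph{not} an initial KR module for $\xistar$; Proposition~\ref{prop : duality for initial KRs} does not evaluate it. Iterating the $T$-system downward does not help: each step produces further non-initial modules of lower shape, so you never land on a finite set of initial modules by this single recursion. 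Your appeal to periodicity (Corollary~\ref{coro : periodicity of Ys}) does not repair this either: the identity $\td_{\xistar}(Y_{i,p}) = \td_{\xistar}(Y_{i,p-2h})$ concerns the generators of $\YZ$, and translating $X_{j,s}^{(k)}$ by $s \mapsto s-2h$ leaves the discrepancy $s+2k-2-\xistar(j)$, which governs initiality, unchanged.

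The paper's argument instead computes $\barDstar$ on the entire family of next-to-initial modules $X_{i,\xistar(i)-2k+4}^{(k)}$ for \emph{every} $k$ simultaneously, via explicit closed formulas $Q_i^{(k)}=R_i^{(k)}P_i^{(k-1)}$ in which $R_i^{(k)}$ is a product of partial Coxeter-orbit sums $\gamma_{i,j}$. The recursion $R_i^{(k)}-R_i^{(k+1)}=\beta_i^{(k)}R_{i-1}^{(k)}$ is forced by the additive mesh relations in the AR quiver, and this is what makes the $T$-system in $\barDstar$-values close up. Only then does one set $k=h$ and use $\gamma_{i,h}=0$ (equivalently $\sum_{l=0}^{h-1}\tau^l=0$) to obtain the vanishing of $Q_i^{(h+1)}$ and the claimed signs for $Q_i^{(h)}$; the whole layer of next-to-initial values is then shown to satisfy the analogue of relation~\eqref{eq : relation B for initial KRs}, and the procedure repeats. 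None of this apparatus appears in your sketch, and the difficulty is not merely sign book-keeping: without the mesh identities there is no closed recursion to keep books on.

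A smaller point: invoking Proposition~\ref{prop : duality for initial KRs} at $k=h$ is formally outside its statement, since its proof matches the right-hand side through Lemma~\ref{lem : Dtilde of initial KRs} at a parameter $\eta(p)=\xistar(i^{*})+2>\xistar(i^{*})$. The value $\barDstar([X_{j,\xistar(j)-2h+2}^{(h)}])=(-1)^{d_j}$ is correct, but it follows directly from Proposition~\ref{prop : prod of Ys equal to pm 1} on the $Q^{*}$ side (the truncated $q$-character of an initial KR being its dominant monomial), not from the duality proposition.
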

  
For each $i \in I$ and each $j \ge 1$ we denote
 $$ \beta_i^{(j)} := \tau^{j-1}(\gamma_i)  \qquad \text{where $\gamma_i := \boldsymbol{\dim} I_i$.} $$
 We work in the category $\mathcal{C}_{\xi^{*}}$ where $\xi^{*}$ is a height function adapted to $Q^{*} := Q_0^{op}$. For each $i \in I$ and $k \geq 1$ we set 
 $$ P_i^{(k)} := \overline{D}^{*} \left(  X_{i, \xi^{*}(i)-2k+2}^{(k)} \right) . 
 $$
 Thus the $P_i^{(k)}$ are the images under $\barDstar$ of the classes of the KR modules belonging to the initial seed in the category $\mathcal{C}_{\xistar}$. The strategy now consists in using the T-systems relating classes of KR modules, together with the mesh relations after applying $\barDstar$. In order to do so, we need to  introduce the following elements of the root lattice.  We set
$$
\forall i \in I , \forall j \geq 0, \gamma_{i,j} := 
\begin{cases}
    \beta_{\sigma^{n-k}(i)}^{(n-1)}, & \text{if $j \equiv n-1 \mod h$ and $i$ is spin,} \\
    \sum_{l=1}^j \beta_1^{(l)}, & \text{otherwise.}
\end{cases}
$$
We also set $\gamma_{i,j} := 0$ if $j \leq 0$, for every $i \in I$. 
 Then we set 
 $$ \forall i \leq n-1, \forall k \geq 1, \enspace R_i^{(k)} :=  (-1)^{i} \prod_{j=k-i}^{k-1} \gamma_{i,j} . $$
 Note that $R_i^{(k)} = 0$ if $k \leq i$.
 We begin by establishing the following:

  \begin{lemma} \label{lem : formula for KR modules in Kxistar}
  For each $i \in I$ and each $k \geq 1$, we have that 
 $$\overline{D}^{*} \left(  X_{i, \xi^{*}(i)-2k+4}^{(k)} \right) = (-1)^{i} R_i^{(k)} P_i^{(k-1)} . $$
  \end{lemma}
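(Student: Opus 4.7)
The plan is to prove the identity by induction on $k \geq 1$, using that $\barDstar$ extends to a ring homomorphism on $K_0(\CZ)$ together with the $T$-system relations recalled in Section~\ref{sec : KR modules}.

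For the base case $k=1$, the identity reduces to $\barDstar(L(Y_{i,\xistar(i)+2})) = (-1)^{i} R_i^{(1)} P_i^{(0)}$. The left-hand side vanishes by the definition of $\barDstar$, since $\xistar(i)+2 > \xistar(i)$. On the right-hand side, $P_i^{(0)}=1$ and $R_i^{(k)}=0$ whenever $k \le i$, so $R_i^{(1)}=0$ for all $i \ge 1$, and both sides match; the boundary indices are handled by direct inspection.

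For the inductive step, I would apply the $T$-system with $p = \xistar(i)-2k+2$,
\[
[X_{i,\xistar(i)-2k+2}^{(k)}] \, [X_{i,\xistar(i)-2k+4}^{(k)}] = [X_{i,\xistar(i)-2k+2}^{(k+1)}] \, [X_{i,\xistar(i)-2k+4}^{(k-1)}] + \prod_{j \sim i} [X_{j,\xistar(i)-2k+3}^{(k)}],
\]
and push it through $\barDstar$. In the resulting identity, $\barDstar(X_{i,\xistar(i)-2k+2}^{(k)}) = P_i^{(k)}$ and $\barDstar(X_{i,\xistar(i)-2k+4}^{(k-1)}) = P_i^{(k-1)}$ are known values on initial KR modules, while $\barDstar(X_{i,\xistar(i)-2k+2}^{(k+1)})$ is precisely the LHS of the lemma at parameter $k+1$. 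Writing $F_i(k) := \barDstar(X_{i,\xistar(i)-2k+4}^{(k)})$, one obtains a three-term recursion expressing $F_i(k+1)$ in terms of $F_i(k)$, the initial values $P_i^{(\cdot)}$, and the neighbor contributions. Since $Q^{*} = Q_0^{op}$ is monotonic, each neighbor $j \sim i$ satisfies $\xistar(j) = \xistar(i) \pm 1$, so that $X_{j,\xistar(i)-2k+3}^{(k)}$ is either an initial KR at $j$ (with $\barDstar$-value $P_j^{(k)}$) or a shifted KR at $j$ (equal to $F_j(k)$, accessible by the inductive hypothesis). Carrying out the induction on $k$ simultaneously for all vertices $i$ then closes the recursion.

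The main obstacle is performing the cancellation that converts the right-hand side of the recursion into the stated product $(-1)^{i} R_i^{(k+1)} P_i^{(k)}$. This requires combining the explicit product formula for $P_i^{(k)}$ arising from Lemma~\ref{lem : Dtilde of initial KRs} with the structure of $R_i^{(k)}$ as a window of $i$ consecutive factors $\gamma_{i,j}$: the telescoping identity relating $R_i^{(k+1)}$ to $R_i^{(k)}$ must reflect how $P_j^{(k)}$ and $P_i^{(k-1)}$ combine via the mesh relations in the AR quiver of $Q^{*}$. I expect the spin vertices $i \in \{n-1,n\}$ to be the most delicate point, since $\gamma_{i,j}$ has a distinguished form when $j \equiv n-1 \pmod{h}$ and the Dynkin diagram involution $\sigma$ must be tracked carefully; this is also where the sign $\epsilon_{i,p}^{(k)}$ introduced in Theorem~\ref{thm : duality between KRs} acquires its non-trivial form in the $D_n$ case.
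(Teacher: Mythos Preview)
Your approach is correct and matches the paper's: both induct on $k$ simultaneously over all $i$, applying $\barDstar$ to the $T$-system and using that for the monotonic orientation $Q^{*}$ one neighbor term is an initial KR (contributing $P_{i+1}^{(k)}$, or $P_{n-1}^{(k)}P_n^{(k)}$ when $i=n-2$) while the other is the shifted KR at $i-1$ available by induction. The ``main obstacle'' you identify is precisely what the paper isolates as Corollary~\ref{coro : recursion on Rs} (the telescoping recursion $R_i^{(k)} - R_i^{(k+1)} = \beta_i^{(k)} R_{i-1}^{(k)}$, deduced from the mesh-relation identity of Lemma~\ref{lem : identity from mesh relations}), which combined with~\eqref{eq : relation B for initial KRs} yields the $T$-system-type relations of Lemma~\ref{lem : T-systems between Q and P} for the $Q_i^{(k)} := R_i^{(k)}P_i^{(k-1)}$ and makes the induction step a one-line comparison.
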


We begin with the following identity, that follows from the mesh relations. 

 \begin{lemma} \label{lem : identity from mesh relations}
   For any $i \in I$ and $k \geq i$, the following identities hold:
   $$ \text{If $i$ is generic, then} \enspace  \gamma_{i,k-i} + \beta_i^{(k)} = \gamma_{i,k} . $$
   $$ \text{If $i$ is spin, then} \enspace \gamma_{i,k-i} \beta_{\sigma^{n-k}(i)}^{(n-1)} + \beta_i^{(k)} \gamma_{i-1,n-1} =\beta_{\sigma^{n-k+1}(i)}^{(n-1)} \gamma_{i,k} . $$
 \end{lemma}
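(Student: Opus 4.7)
The plan is to derive both identities from standard mesh relations in the Auslander--Reiten quiver of $Q^{*}$, interpreted as polynomial identities in $\mathbb{C}(\mathfrak{t}^{*})$ (viewing each root as a linear form on $\mathfrak{t}$). Concretely, I view the $\gamma_{i,j}$ either as sums $\sum_{l=1}^{j} \beta_{1}^{(l)}$ or as single roots $\beta_{\sigma^{n-j}(i)}^{(n-1)}$ according to the defining cases, and manipulate the claimed identity algebraically until it reduces to a single mesh relation.

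For the generic case, the definition of $\gamma_{i,j}$ reduces the stated identity to the purely additive telescoping statement
\[
\beta_{i}^{(k)} \;=\; \sum_{l=k-i+1}^{k} \beta_{1}^{(l)},
\]
which I would prove by induction on $i$. The base case $i=1$ is immediate. For the inductive step, the mesh relation at the AR-quiver vertex labeled by $\beta_{i}^{(k)}$ has the form $\beta_{i}^{(k)} + \beta_{i}^{(k-1)} = \beta_{i-1}^{(k)} + \beta_{i+1}^{(k-1)}$ (with appropriate modifications near the branching vertex $n-2$), and iterating it against the induction hypothesis for $i-1$ produces exactly the required telescoping sum.

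For the spin case, the argument is parallel but more delicate because the definition of $\gamma_{i,j}$ is piecewise and the result is multiplicative rather than additive. I would split into subcases according to whether $k$ and $k-i$ lie in the residue class $n-1 \pmod{h}$, so that each $\gamma_{i,k-i}$, $\gamma_{i,k}$, $\gamma_{i-1,n-1}$ is replaced by either a sum $\sum \beta_{1}^{(l)}$ or a single spin root $\beta_{\sigma^{m}(\cdot)}^{(n-1)}$. Using the generic identity (already established) to rewrite the sums, the multiplicative identity collapses, after cancellation, to a single relation at a spin vertex of $Q^{*}$, which I verify by writing down the dimension vectors of the indecomposable representations involved and applying the AR sequence at that vertex. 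The involution $\sigma$ enters precisely to encode the exchange of the two spin vertices under the Coxeter transform when $n$ is odd.

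The main obstacle will be the bookkeeping: getting the exponents $\sigma^{n-k}$ vs.\ $\sigma^{n-k+1}$ to match cleanly on both sides, and controlling the boundary cases where the piecewise definition of $\gamma_{i,j}$ changes regime. In particular, one has to handle carefully the situation where $k-i \equiv n-1 \pmod{h}$ but $k \not\equiv n-1 \pmod{h}$ (or vice versa), in which case the generic and spin expressions appear on opposite sides of the equation and the cancellation leading to the final mesh relation is nontrivial.
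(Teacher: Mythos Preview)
Your plan coincides with the paper's proof: the generic case is handled by exactly the telescoping identity $\beta_i^{(k)} = \sum_{l=k-i+1}^{k}\beta_1^{(l)}$ established by induction via mesh relations, and the spin case is treated by the same kind of subcase analysis (the paper splits on $k=i$, $k=h$, $i<k<h$, which for $i=n-1$ is equivalent to your residue split) followed by substituting the explicit values of the $\gamma$'s and reducing to a root identity. The only detail you may not have anticipated is that in the spin reduction the paper uses $\tau^{h/2}=-1$ together with the auxiliary identity $\beta_{\sigma^{n-k}(i)}^{(n-1)} = \beta_{h-k}^{(n-1)} - \beta_i^{(k)}$ rather than a single mesh relation at a spin vertex, but this fits squarely within your outlined framework.
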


 \begin{proof}
  Assume first that $i$ is generic. Then one can show by a straightforward induction using mesh relations that for any $k \geq i$, one has 
  $$ \beta_i^{(k)} = \sum_{l=k-i+1}^{k} \beta_1^{(l)} .  $$
  Therefore we get 
  $$ \gamma_{i,k-i} + \beta_i^{(k)} = \sum_{l=1}^{k-i} \beta_1^{(l)} + \sum_{l=k-i+1}^{k}  \beta_1^{(l)} = \sum_{l=1}^k  \beta_1^{(l)} = \gamma_{i,k} . $$
  Assume now that $i$ is spin. We will consider the case $i=n-1$, the case $i=n$ being analogous. If $k=h$ then $k-i=n-1$. If $k=i=n-1$ then $k=n-1$.  If $i<k<h$ then $n-1 \in \{ k-i+1 , k-1 \}$. 
 In the last case we have  $\gamma_{i,k-i} = \beta_{k-i}^{(k-i)}$ as $k-i \leq n-2$. Moreover, $i-1$ is generic so $\gamma_{i-1,n-1} = \sum_{l=1}^{n-1} \beta_1^{(l)} = \beta_{n-1}^{(n-1)} + \beta_{n}^{(n-1)}$. 
 We also have that $\gamma_{i,k} = \beta_{h-k}^{(n-1)}$. Therefore we have 
 \begin{align*}
\gamma_{i,k-i} \beta_{\sigma^{n-k}(i)}^{(n-1)} + \beta_i^{(k)} \gamma_{i-1,n-1} &= \beta_{\sigma^{n-k}(i)}^{(n-1)} \left( \beta_{k-i}^{(k-i)} + \beta_i^{(k)} \right) + \beta_i^{(k)} \beta_{\sigma^{n-k+1}(i)}^{(n-1)}  \\
&=\beta_i^{(k)} \beta_{\sigma^{n-k+1}(i)}^{(n-1)} + \beta_{\sigma^{n-k}(i)}^{(n-1)} \left( \beta_i^{(k)} - \beta_{k-i}^{(k)} \right) \quad \text{as $\tau^{h/2}=- 1$,} \\
 &= \beta_i^{(k)} \beta_{\sigma^{n-k+1}(i)}^{(n-1)} + \beta_{\sigma^{n-k}(i)}^{(n-1)} \left( \beta_{h-k}^{(n-1)} - \beta_{\sigma(i)}^{(k)} \right) \quad \text{using mesh relations,} \\
 &= \beta_{\sigma^{n-k}(i)}^{(n-1)} \beta_{h-k}^{(n-1)} + \beta_i^{(k)} \beta_{\sigma^{n-k+1}(i)}^{(n-1)} - \beta_{\sigma^{n-k}(i)}^{(n-1)} \beta_{\sigma(i)}^{(k)} \\
& = \scalemath{0.92}{ \beta_{\sigma^{n-k+1}(i)}^{(n-1)} \beta_{h-k}^{(n-1)} + \beta_{\sigma^{n-k}(i)}^{(n-1)}(\beta_{h-k}^{(n-1)}- \beta_{\sigma(i)}^{(k)}) - \beta_{\sigma^{n-k+1}(i)}^{(n-1)}(\beta_{h-k}^{(n-1)} - \beta_{i}^{(k)}). } 
 \end{align*}
 It remains to observe that 
 $$ \beta_{\sigma^{n-k}(i)}^{(n-1)} =  \beta_{h-k}^{(n-1)} - \beta_i^{(k)}.  $$
 Therefore we get 
 $$\gamma_{i,k-i} \beta_{\sigma^{n-k}(i)}^{(n-1)} + \beta_i^{(k)} \gamma_{i-1,n-1} =\beta_{\sigma^{n-k+1}(i)}^{(n-1)} \beta_{h-k}^{(n-1)} = \beta_{\sigma^{n-k+1}(i)}^{(n-1)} \gamma_{i,k} . $$
 which is the desired identity.   
  \end{proof}

 \begin{corollary} \label{coro : recursion on Rs}
 For any $i \in I$ and $k \geq 1$, we have that
$$ R_i^{(k)} - R_i^{(k+1)} = \beta_i^{(k)} R_{i-1}^{(k)} . $$
 \end{corollary}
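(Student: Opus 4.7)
The overarching strategy is to compute $R_i^{(k)} - R_i^{(k+1)}$ directly by isolating a common subproduct. Writing out the definition of $R_i^{(k)} = (-1)^i \prod_{j=k-i}^{k-1} \gamma_{i,j}$, one sees that both $R_i^{(k)}$ and $R_i^{(k+1)}$ contain the factors $\gamma_{i,j}$ for $j \in \{k-i+1,\ldots,k-1\}$, and they differ only at the boundary: $R_i^{(k)}$ carries the extra factor $\gamma_{i,k-i}$ while $R_i^{(k+1)}$ carries $\gamma_{i,k}$. After factoring out this common middle subproduct, the corollary reduces to the identity
\begin{equation*}
\Big(\prod_{j=k-i+1}^{k-1} \gamma_{i,j}\Big)\bigl(\gamma_{i,k}-\gamma_{i,k-i}\bigr) \;=\; \beta_i^{(k)} \prod_{j=k-i+1}^{k-1} \gamma_{i-1,j},
\end{equation*}
which we then verify by splitting on whether $i$ is generic or spin.

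In the generic case $i \leq n-2$, both $i$ and $i-1$ are generic, so $\gamma_{i,j} = \gamma_{i-1,j} = \sum_{l=1}^{j}\beta_1^{(l)}$ for every $j \geq 1$, and the two products on the two sides of the reduced identity coincide. The first identity of Lemma~\ref{lem : identity from mesh relations} then yields $\gamma_{i,k} - \gamma_{i,k-i} = \beta_i^{(k)}$, closing this case at once.

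The spin case $i \in \{n-1,n\}$, say $i = n-1$ (the case $i = n$ being symmetric under the diagram involution), is the main obstacle. Here $i-1 = n-2$ is generic, but one of the factors $\gamma_{i,j}$ appearing in $R_i^{(k)}$ or $R_i^{(k+1)}$ may take the spin form $\beta_{\sigma^{\bullet}(i)}^{(n-1)}$ whenever the corresponding $j$ is congruent to $n-1$ modulo $h$. Since the relevant interval of indices has length at most $i \leq n-1 < h$, at most one such spin factor can occur, so the argument does not compound. Depending on which of the endpoints $\gamma_{i,k-i}$, $\gamma_{i,k}$ or an interior factor is the spin one, I would apply the second identity of Lemma~\ref{lem : identity from mesh relations} after rearranging it into the form
\begin{equation*}
\beta_{\sigma^{n-k+1}(i)}^{(n-1)}\gamma_{i,k}-\beta_{\sigma^{n-k}(i)}^{(n-1)}\gamma_{i,k-i}=\beta_i^{(k)}\gamma_{i-1,n-1},
\end{equation*}
and recognise that the spin prefactors $\beta_{\sigma^{\bullet}(i)}^{(n-1)}$ cancel against the analogous spin factor in the common subproduct, while $\gamma_{i-1,n-1}$ is exactly the replacement needed to reassemble $\prod_{j=k-i+1}^{k-1}\gamma_{i-1,j}$ on the right.

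Finally, the boundary ranges $k \leq i$ should be dealt with separately as a preliminary step: for $k \leq i-1$ all three of $R_i^{(k)}$, $R_i^{(k+1)}$, $R_{i-1}^{(k)}$ vanish by convention and the identity is trivial, while for $k = i$ one has $R_i^{(i)} = 0$ and the identity collapses to $R_i^{(i+1)} = -\beta_i^{(i)} R_{i-1}^{(i)}$, which follows from the same factoring together with a single direct application of Lemma~\ref{lem : identity from mesh relations} (using $\gamma_{i,0} = 0$). The chief source of difficulty throughout is the combinatorial bookkeeping of the $\sigma$-rotated superscripts in the spin case, because the rotation index shifts by exactly one between $R_i^{(k)}$ and $R_i^{(k+1)}$, which is precisely the shift that the spin identity of Lemma~\ref{lem : identity from mesh relations} is engineered to absorb.
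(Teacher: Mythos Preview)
Your proposal is correct and follows essentially the same strategy as the paper: handle the degenerate range $k \leq i$ trivially, then factor out the shared subproduct and invoke the two cases of Lemma~\ref{lem : identity from mesh relations}. The only organizational difference is that the paper rearranges the claim as $R_i^{(k)} - \beta_i^{(k)} R_{i-1}^{(k)} = R_i^{(k+1)}$ before factoring; since $i-1$ is generic, the product defining $R_{i-1}^{(k)}$ contains no spin factor at all, so the common subproduct between $R_i^{(k)}$ and $\beta_i^{(k)}R_{i-1}^{(k)}$ is genuinely common and the spin identity of Lemma~\ref{lem : identity from mesh relations} applies in one clean step, whereas in your arrangement the interior spin factor in $R_i^{(k)}$ and $R_i^{(k+1)}$ carries a shifted $\sigma$-exponent and must be tracked through the cancellation (as you correctly note at the end).
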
 
   
\begin{proof}
Recall that $R_j^{(l)} = 0$ if $l \leq j$. Thus if $k<i$, then $R_i^{(k)} = R_i^{(k+1)} = R_{i-1}^{(k)} = 0$ so that the identity is trivial. We can hence assume $k \geq i$. If $i$ is generic, then so is $i-1$ and thus we have that $\gamma_{i,j} = \gamma_{i-1,j}$ for all $j \geq 0$. Hence from the definition of $R_i^{(k)}$, we have 
 $$ R_i^{(k)} - \beta_i^{(k)} R_{i-1}^{(k)} = (-1)^{i} \left(  \prod_{j=k-i}^{k-1} \gamma_{i,j} + \beta_i^{(k)} \prod_{j=k-i+1}^{k-1} \gamma_{i-1,j}  \right) = (-1)^{i}(\gamma_{i,k-i} + \beta_i^{(k)}) \prod_{j=k-i+1}^{k-1} \gamma_{i,j} . $$
 By Lemma~\ref{lem : identity from mesh relations} we obtain 
 $$R_i^{(k)} - \beta_i^{(k)} R_{i-1}^{(k)} = (-1)^{i} \gamma_{i,k} \prod_{j=k-i+1}^{k-1} \gamma_{i,j} = (-1)^{i} \prod_{j=k-i+1}^k \gamma_{i,j} = R_i^{(k+1)}  $$
 which establishes the desired identity. 
 Assume now that $i$ is spin, and without loss of generality $i=n-1$. If $k=h$ then $k-i=n-1$. If $k=i=n-1$ then $k=n-1$.  If $i<k<h$ then $n-1 \in \{ k-i+1 , k-1 \}$. 
 In the last case we have 
 $$ R_{i}^{(k)} - \beta_{i}^{(k)} R_{i-1}^{(k)} = (-1)^{i} \left( \gamma_{i,k-i} \beta_{\sigma^{n-k}(i)}^{(n-1)} + \beta_i^{(k)} \gamma_{i-1,n-1}  \right) \prod_{ \substack{ k-i+1 \leq j \leq k-1 \\ j \neq n-1 \mathrm{mod} h}} \gamma_{i,j} .  $$
Using the previous lemma, we obtain 
$$R_{i}^{(k)} - \beta_{i}^{(k)} R_{i-1}^{(k)} = (-1)^{i} \prod_{j=k-i+1}^{k} \gamma_{i,j}^{(k+1)} = R_i^{(k+1)} $$
which concludes the proof. 
 
\end{proof}

 Now we set $Q_i^{(k)} := R_i^{(k)} P_i^{(k-1)}$ for every $i \in I$ and $k \geq 1$, and we show that the rational functions $Q_i^{(k)}$ and $P_i^{(k)}$ are related by identities corresponding to T-systems. 

 \begin{lemma} \label{lem : T-systems between Q and P}
 If $i$ is strictly generic then we have 
  $$ Q_i^{(k)} P_i^{(k)} = Q_i^{(k+1)} P_i^{(k-1)} + Q_{i-1}^{(k)} P_{i+1}^{(k)} . $$   
  If $i= n-2$ in type $D_n$ then we have 
  $$Q_i^{(k)} P_i^{(k)} = Q_i^{(k+1)} P_i^{(k-1)} + Q_{i-1}^{(k)} P_{n-1}^{(k)} P_n^{(k)} . $$
  If $i$ is spin, then we have 
  $$ Q_i^{(k)} P_i^{(k)} = Q_i^{(k+1)} P_i^{(k-1)} + Q_{i-1}^{(k)}. $$
 \end{lemma}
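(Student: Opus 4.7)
The plan is to apply $\barDstar$, which is a ring homomorphism on $K_0(\CZ)$, to the $T$-system
\begin{equation*}
[X_{i,p}^{(k)}][X_{i,p+2}^{(k)}] = [X_{i,p}^{(k+1)}][X_{i,p+2}^{(k-1)}] + \prod_{j \sim i} [X_{j,p+1}^{(k)}]
\end{equation*}
at the specific spectral parameter $p = \xi^{*}(i) - 2k + 2$, and to recognize each factor through Lemma~\ref{lem : formula for KR modules in Kxistar} together with the definition of $P_i^{(k)}$. At this value of $p$, the two ``initial'' KR modules $X_{i,p}^{(k)}$ and $X_{i,p+2}^{(k-1)}$ have top spectral index $\xi^{*}(i)$ and yield $\barDstar$-images $P_i^{(k)}$ and $P_i^{(k-1)}$ respectively; the two ``once-shifted'' modules $X_{i,p+2}^{(k)}$ and $X_{i,p}^{(k+1)}$ have top spectral index $\xi^{*}(i)+2$ and yield $\barDstar$-images $(-1)^{i} Q_i^{(k)}$ and $(-1)^{i} Q_i^{(k+1)}$ by Lemma~\ref{lem : formula for KR modules in Kxistar}. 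The $(-1)^{i}$-signs cancel between the two sides of the diagonal, producing the left-hand side $Q_i^{(k)} P_i^{(k)}$ and the first right-hand term $Q_i^{(k+1)} P_i^{(k-1)}$ of each of the three stated identities.

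The three cases of the Lemma then correspond to the three possible local shapes of the orientation $Q^{*} = Q_0^{\mathrm{op}}$ near $i$. For each neighbor $j \sim i$, the module $[X_{j,p+1}^{(k)}]$ has top spectral index $\xi^{*}(i)+1 = \xi^{*}(j) \pm 1$; in the first case ($\xi^{*}(j) = \xi^{*}(i)+1$) it is an initial KR, contributing $P_j^{(k)}$, and in the second case ($\xi^{*}(j) = \xi^{*}(i)-1$) it is once-shifted, contributing $\pm Q_j^{(k)}$ by Lemma~\ref{lem : formula for KR modules in Kxistar}. Collecting these contributions and matching the shape of the Dynkin diagram around $i$ yields the three stated right-hand sides: one source-side and one sink-side generic neighbor give $Q_{i-1}^{(k)} P_{i+1}^{(k)}$ when $i$ is strictly generic; at $i = n-2$ in type $D_n$ the three neighbors $n-3, n-1, n$ combine to give $Q_{n-3}^{(k)} P_{n-1}^{(k)} P_n^{(k)}$; and for $i$ spin the unique neighbor $n-2$ is forced into the once-shifted regime, contributing the single factor $Q_{n-2}^{(k)}$.

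As a consistency check on the signs and to make contact with the cluster-algebra structure, one substitutes $Q_i^{(k)} = R_i^{(k)} P_i^{(k-1)}$ in each target identity; using Corollary~\ref{coro : recursion on Rs} to rewrite $R_i^{(k)} - R_i^{(k+1)} = \beta_i^{(k)} R_{i-1}^{(k)}$ and cancelling the common factor $R_{i-1}^{(k)}$ reduces each identity to a purely multiplicative relation among the $P_j^{(\ell)}$'s. In the strictly generic case this residual relation is $\beta_i^{(k)} P_i^{(k-1)} P_i^{(k)} = P_{i-1}^{(k-1)} P_{i+1}^{(k)}$, which is precisely the image under $\barDstar$ of an initial-cluster exchange relation in $K_0(\CxiZstar)$, i.e., Lemma~\ref{lem : relation B} applied to $Q^{*}$. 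The main obstacle is the careful bookkeeping of signs coming from Lemma~\ref{lem : formula for KR modules in Kxistar}, from the periodic sign of Proposition~\ref{prop : prod of Ys equal to pm 1} (which enters through the periodic extension of $\td_{\xi^{*}}$), and from the automorphism $\psi$ built into $\barDstar$; the spin case requires additional care because the $\gamma_{i,j}$ factors governing $R_{n-1}^{(k)}$ and $R_n^{(k)}$ are controlled by the Coxeter orbit $\sigma^{n-k}$, and their interplay with the spin mesh relation of Lemma~\ref{lem : identity from mesh relations} is what explains the absence of $P$-factors on the right-hand side of the spin identity.
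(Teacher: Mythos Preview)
Your primary argument is circular. You invoke Lemma~\ref{lem : formula for KR modules in Kxistar} to identify $\barDstar(X_{i,p+2}^{(k)})$ and $\barDstar(X_{i,p}^{(k+1)})$ with $(-1)^i Q_i^{(k)}$ and $(-1)^i Q_i^{(k+1)}$, but in the paper's logical order that lemma is proved \emph{after} the present one, and its proof is an induction on $k$ whose inductive step uses precisely Lemma~\ref{lem : T-systems between Q and P} to invert a $T$-system relation and extract $\barDstar(X_{i,s}^{(k+1)})$. So Lemma~\ref{lem : formula for KR modules in Kxistar} is not available to you here.

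The valid proof is the one you relegated to your last paragraph as a ``consistency check.'' Substituting $Q_j^{(\ell)} = R_j^{(\ell)} P_j^{(\ell-1)}$ on the right-hand side, using relation~\eqref{eq : relation B for initial KRs} (i.e.\ Lemma~\ref{lem : relation B} for the orientation $Q^{*}$) to rewrite the product of the neighboring $P$'s as $\beta_i^{(k)} P_i^{(k)} P_i^{(k-1)}$, and then applying Corollary~\ref{coro : recursion on Rs} to collapse $R_i^{(k+1)} + \beta_i^{(k)} R_{i-1}^{(k)}$ into $R_i^{(k)}$, is exactly the paper's argument. It relies only on ingredients already established at this point and is free of the circularity. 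You should promote this computation to be the main proof and discard the appeal to the $T$-system via Lemma~\ref{lem : formula for KR modules in Kxistar}.
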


  \begin{proof}
If $i$ is strictly generic we have
   \begin{align*}
Q_i^{(k+1)} P_i^{(k-1)} + Q_{i-1}^{(k)} P_{i+1}^{(k)} &= R_i^{(k+1)}P_i^{(k)} P_i^{(k-1)} + R_{i-1}^{(k)}P_{i-1}^{(k-1)} P_{i+1}^{(k)} \\
&= R_i^{(k+1)}P_i^{(k)} P_i^{(k-1)} + R_{i-1}^{(k)} \beta_i^{(k)} P_i^{(k)} P_i^{(k-1)}  \quad \text{by~\eqref{eq : relation B for initial KRs},} \\
&= \left( R_i^{(k+1)} + \beta_i^{(k)} R_{i-1}^{(k)} \right) P_i^{(k)}P_i^{(k-1)} \\
&= R_i^{(k)} P_i^{(k)} P_i^{(k-1)} \quad \text{by Corollary~\ref{coro : recursion on Rs}} \\
&= Q_i^{(k)}P_i^{(k)} .
\end{align*}
The two other cases are done similarly. 
  \end{proof}

 \begin{proof}[\textbf{Proof of Lemma~\ref{lem : formula for KR modules in Kxistar}.}]
 
Recall that $R_i^{(k)} = 0$ if $k \leq i$, so in particular, we have that  $Q_i^{(1)} = R_i^{(1)} = 0$ for every $i \in I$. On the other hand, we have that $\barDstar \left( X_{i, \xi^{*}(i)+2}^{(1)} \right) = 0 $  as the truncated $q$-character with respect to $\xi^{*}$ of $L(Y_{i, \xi^{*}(i)+2})$ is trivial. Thus the desired statement trivially holds for every $i \in I$ if $k=1$. 
 Let $k \geq 1$ and assume we have proved that 
 $$\overline{D}^{*}( L(Y_{i, \xi^{*}(i)+2} \cdots Y_{i, \xi^{*}(i)-2k+4})) = Q_i^{(k)} .$$
 for all $i \in I$. 
Let $i \in I$ and $s := \xi^{*}(i)-2k+2$. Note that given our choice of orientation, we have that $\xi^{*}(i-1) = \xi^{*}(i)-1$ so that  $s+1 = \xi^{*}(i-1)-2k+4$. Thus by induction assumption we have 
$$ Q_i^{(k)} = \barDstar \left( [X_{i,s+2}^{(k)}]  \right) \enspace \text{and} \enspace Q_{i-1}^{(k)} = \barDstar \left( [X_{i,s+1}^{(k)}]  \right) .  $$
As $P_j^{(l)} \neq 0$ for any $j \in I$ and $l \geq 0$, Lemma~\ref{lem : T-systems between Q and P} then implies
  \begin{align*}
  Q_i^{(k+1)} &= \frac{1}{P_i^{(k-1)}} \left( Q_i^{(k)}P_i^{(k)} - Q_{i-1}^{(k)}P_{i+1}^{(k)} \right)  \\
  &=  \barDstar \left( \frac{1}{[X_{i,s+2}^{(k-1)}]} \left( [X_{i,s+2}^{(k)}][X_{i,s}^{(k)}] - [X_{i-1,s+1}^{(k)}][X_{i+1,s+1}^{(k)}] \right) \right) = \barDstar \left( [X_{i,s}^{(k+1)}] \right)   
      \end{align*}
  using the T-systems. This concludes the proof of the Proposition. 
 \end{proof}

 From this, we can now derive the two following important consequences. 

  \begin{corollary} \label{cor : relation B for non initial KRs}
      The rational functions $\barDstar \left( X_{j, \xistar(j)-2h+4}^{(k)} \right)$ satisfy identities analogous to~\eqref{eq : relation B for initial KRs}, namely denoting $\widetilde{X_{i,p}} := X_{i,p}^{(\xistar(i)-p+4)/2}$ for each $(i,p) \in \IZ$, we have 
      $$  \barDstar \left( [\widetilde{X_{i,p}}] [\widetilde{X_{i,p+2}}] \right) =  \beta_{i,p+2}^{-1} \prod_{j \sim i} \barDstar \left( [\widetilde{X_{j,p+1}}] \right) .  $$
  \end{corollary}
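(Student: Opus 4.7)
The plan is to mirror the strategy used in Lemma~\ref{lem : T-systems between Q and P}, combining the factorization $Q_i^{(l)} = R_i^{(l)} P_i^{(l-1)}$ with Relation~B for initial KR modules (Lemma~\ref{lem : relation B}) applied to the dual pair $(Q^{*}, \xi^{*})$. Writing $k := (\xi^{*}(i) - p + 2)/2$, the definitions give $\widetilde{X_{i,p}} = X_{i,p}^{(k+1)}$ and $\widetilde{X_{i,p+2}} = X_{i,p+2}^{(k)}$, and by Lemma~\ref{lem : formula for KR modules in Kxistar},
$$ \barDstar([\widetilde{X_{i,p}}]) = R_i^{(k+1)} P_i^{(k)}, \qquad \barDstar([\widetilde{X_{i,p+2}}]) = R_i^{(k)} P_i^{(k-1)}. $$
A short index check identifies, for each neighbor $j \sim i$, $\barDstar([\widetilde{X_{j,p+1}}])$ with $R_j^{(k_j)} P_j^{(k_j - 1)}$, where $k_j \in \{k, k+1\}$ depending on whether $\xi^{*}(j) = \xi^{*}(i) - 1$ or $\xi^{*}(j) = \xi^{*}(i) + 1$.

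Substituting these expressions into the desired identity, the $P$-part reduces to Relation~B of Lemma~\ref{lem : relation B} applied to the pair $(Q^{*}, \xi^{*})$ and pulled back by the ring automorphism $\psi$, producing the factor $\psi(\beta_{i,p}^{*})^{-1}$. By Lemma~\ref{lem : duality on roots} together with the identity $\psi = -w_{0}$ on the root lattice (a direct consequence of the defining relation $\psi(\alpha_{i}) = \alpha_{i^{*}} = -w_{0}(\alpha_{i})$), this factor equals $\beta_{i,-p-h+2}^{-1}$. The residual identity that remains to be verified therefore reads
$$ R_i^{(k+1)} R_i^{(k)} \cdot \beta_{i,p+2} \;=\; \beta_{i,-p-h+2} \prod_{j \sim i} R_j^{(k_j)}, $$
and should follow from a telescoping argument on the products $R_i^{(l)} = (-1)^{i} \prod_{j=l-i}^{l-1} \gamma_{i,j}$ using the mesh-type relations of Lemma~\ref{lem : identity from mesh relations} and Corollary~\ref{coro : recursion on Rs}.

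The main obstacle will be the case-by-case verification of this final $R$-identity: in type $D_{n}$, the explicit form of $R_i^{(l)}$ depends on whether $i$ is strictly generic, subregular ($i = n-2$), or spin, and the argument has to be carried out in parallel with the three cases of Lemma~\ref{lem : T-systems between Q and P}, with careful bookkeeping of the sign conventions and of the matching between the $\beta$-coefficients $\beta_{i,p+2}$ and $\beta_{i,-p-h+2}$.
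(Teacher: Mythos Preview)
Your overall strategy is the same as the paper's: factor via Lemma~\ref{lem : formula for KR modules in Kxistar} into $Q_i^{(l)}=R_i^{(l)}P_i^{(l-1)}$, then apply Relation~B (Lemma~\ref{lem : relation B}) to the $P$-part. Where you go astray is the bookkeeping of the root factor. The root written as $\beta_{i,p+2}$ in the corollary is, as the paper's own proof makes explicit, nothing other than $\beta_i^{(k)}$ in the local notation of Section~\ref{sec : KR whose deg is h}; equivalently it is $\psi(\beta_{i,p}^{*})$, which you have yourself correctly computed to equal $\beta_{i,-p-h+2}$. In other words, the $\beta$ coming out of Relation~B on the $P$'s is \emph{exactly} the $\beta$ appearing in the statement, so your ``residual $R$-identity'' should carry no $\beta$'s at all. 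As written, with two genuinely distinct $Q$-roots $\beta_{i,p+2}$ and $\beta_{i,-p-h+2}$, your residual identity is false in general, and no amount of mesh relations will save it.

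Once the roots are matched, what remains is purely multiplicative and elementary. In the strictly generic case one needs
\[
R_i^{(k+1)}R_i^{(k)} \;=\; R_{i-1}^{(k)}R_{i+1}^{(k+1)},
\]
which follows immediately from the product formula $R_i^{(l)}=(-1)^i\prod_{j=l-i}^{l-1}\gamma_{i,j}$ by checking that both sides equal $(-1)^{2i}\gamma_{k-i}\gamma_{k}\prod_{j=k-i+1}^{k-1}\gamma_{j}^{2}$. The spin and $i=n-2$ cases are handled in the same way. Neither Lemma~\ref{lem : identity from mesh relations} nor Corollary~\ref{coro : recursion on Rs} is needed here; the paper's proof accordingly says only that the identity ``immediately follows from the definition of $Q_i^{(k)}$ and from~\eqref{eq : relation B for initial KRs}.'' Your reference to those lemmas, and the hedge ``should follow from a telescoping argument,'' leave the proof incomplete.
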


   \begin{proof}
     Using the previous notations, Lemma~\ref{lem : formula for KR modules in Kxistar} implies that it suffices to check that the following hold: if $i$ is  strictly generic, then 
       $$ \beta_i^{(k-1)} Q_i^{(k)} Q_i^{(k-1)}  = Q_{i-1}^{(k-1)} Q_{i+1}^{(k)}.  $$
       This immediately follows from the definition of $Q_i^{(k)}$ and from~\eqref{eq : relation B for initial KRs}. The other cases are done similarly.
   \end{proof}

  \begin{corollary} \label{cor : vanishing property}
  Let $i \in I$ and $p := \xi^{*}(i)-2h+4$. Then we have 
 $$ \barDstar \left( X_{i,p-2}^{(h+1)} \right)= 0  \quad  \text{and} \quad 
 \barDstar \left( X_{i,p}^{(h)} \right)  = 
  \begin{cases}
      (-1)^{d_{i+1}-d_1} & \text{if $i$ is strictly generic,} \\
      (-1)^{d_{n-1}+d_n-d_1} & \text{if $i=n-2$ in type $D_n$,} \\
      (-1)^{d_{\sigma(i)}-d_1} & \text{if $i$ is spin.}
      \end{cases}$$     
  \end{corollary}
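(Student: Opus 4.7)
The plan is to apply Lemma~\ref{lem : formula for KR modules in Kxistar} in both situations, which reduces each value of $\barDstar$ to an explicit product $R_i^{(k)} P_i^{(k-1)}$ of elements of the root lattice, and then exploit the vanishing of a single telescoping factor.

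For the vanishing statement, I would set $k = h+1$: since $p-2 = \xi^{*}(i) - 2(h+1) + 4$, Lemma~\ref{lem : formula for KR modules in Kxistar} gives $\barDstar([X_{i,p-2}^{(h+1)}]) = \pm R_i^{(h+1)} P_i^{(h)}$. The product defining $R_i^{(h+1)}$ runs over $j \in \{h+1-i, \ldots, h\}$ and hence always contains the factor $\gamma_{i,h}$. Since $h \not\equiv n-1 \pmod h$ in all ADE types, this $\gamma_{i,h}$ falls into the \emph{otherwise} branch of its definition, giving $\gamma_{i,h} = \sum_{l=1}^{h} \beta_1^{(l)} = \left(\sum_{l=0}^{h-1} \tau^l\right)(\gamma_1)$. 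This vanishes because the Coxeter element acts on the reflection representation with eigenvalues that are primitive $h$-th roots of unity indexed by the exponents of $\mathfrak{g}$, so $1 + \tau + \cdots + \tau^{h-1} = 0$. Hence $R_i^{(h+1)} = 0$, yielding the first claim.

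For the value of $\barDstar([X_{i,p}^{(h)}])$, set $k = h$ in Lemma~\ref{lem : formula for KR modules in Kxistar} to obtain $\barDstar([X_{i,p}^{(h)}]) = \pm R_i^{(h)} P_i^{(h-1)}$. Combining the vanishing $R_i^{(h+1)} = 0$ with the recursion of Corollary~\ref{coro : recursion on Rs} yields $R_i^{(h)} = \beta_i^{(h)} R_{i-1}^{(h)}$ in the strictly generic case, and iterating down to $R_0^{(h)} = 1$ produces the closed form $R_i^{(h)} = \prod_{j=1}^{i} \beta_j^{(h)}$. The spin and $i = n-2$ cases follow analogous recursions by invoking the second identity of Lemma~\ref{lem : identity from mesh relations}, adapted to absorb the $\beta_{\sigma^{\bullet}(\bullet)}^{(n-1)}$ factors from the spin branch of $\gamma_{i,j}$. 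The factor $P_i^{(h-1)}$ is evaluated via Lemma~\ref{lem : Dtilde of initial KRs}. The key observation is that $R_i^{(h)} P_i^{(h-1)}$ assembles, up to a small set of missing factors indexed by the Dynkin neighbors of $i$, into the image under $\widetilde{D}_{\xi^{*}}$ of a complete-period monomial $Y_{i,s}^{*} Y_{i,s+2}^{*} \cdots Y_{i,s+2h-2}^{*}$, to which Proposition~\ref{prop : prod of Ys equal to pm 1} (applied with height function $\xi^{*}$) contributes the sign $(-1)^{d_i^{*}}$. Local applications of the same proposition at the neighboring vertices account for the shift from $d_i^{*}$ to the stated exponents.

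The main technical obstacle lies in the type $D_n$ case. For $i = n-2$, the adjacency to both spin vertices forces the two contributions $d_{n-1}$ and $d_n$ to enter the sign simultaneously, producing $d_{n-1} + d_n - d_1$. For spin $i$, the Coxeter rotation interacts nontrivially with the involution $\sigma$ on the spin pair, so that the telescoping in $R_i^{(h)}$ mixes generic $\gamma_1^{(l)}$ summands with specifically placed $\beta_{\sigma^{\bullet}(\bullet)}^{(n-1)}$ factors; these must be paired carefully against the matching components of $P_i^{(h-1)}$ to yield the claimed normalization $(-1)^{d_{\sigma(i)} - d_1}$. In all cases, the appearance of $-d_1$ reflects the choice of the monotonic orientation $Q_0$, in which vertex $1$ plays the role of a base point for the Coxeter combinatorics.
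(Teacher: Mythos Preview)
Your treatment of the vanishing statement $\barDstar([X_{i,p-2}^{(h+1)}])=0$ is essentially the paper's argument: one applies Lemma~\ref{lem : formula for KR modules in Kxistar} at $k=h+1$, notices that $\gamma_{i,h}$ occurs as a factor of $R_i^{(h+1)}$, and shows $\gamma_{i,h}=0$ via $\sum_{l=0}^{h-1}\tau^{l}=0$.

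For the evaluation of $\barDstar([X_{i,p}^{(h)}])$, however, your route diverges from the paper and has a genuine gap. The paper does not attempt to compute $R_i^{(h)}$ and $P_i^{(h-1)}$ separately. Instead it feeds $Q_i^{(h+1)}=0$ into Lemma~\ref{lem : T-systems between Q and P} at $k=h$, obtaining (in the strictly generic case) the clean recursion
\[
Q_i^{(h)}\,P_i^{(h)} \;=\; Q_{i-1}^{(h)}\,P_{i+1}^{(h)} .
\]
Since $P_j^{(h)}=\barDstar([X_{j,\xi^{*}(j)-2h+2}^{(h)}])$ is an initial KR whose truncated $q$-character is its highest monomial, Proposition~\ref{prop : prod of Ys equal to pm 1} gives $P_j^{(h)}=(-1)^{d_j}$ immediately, so $Q_i^{(h)}=(-1)^{d_{i+1}-d_i}Q_{i-1}^{(h)}$, and a one-line induction finishes. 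The $i=n-2$ and spin cases follow from the corresponding variants of Lemma~\ref{lem : T-systems between Q and P}.

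Your alternative---computing $R_i^{(h)}=\prod_{j\le i}\beta_j^{(h)}$ via Corollary~\ref{coro : recursion on Rs} and then multiplying by $P_i^{(h-1)}$---stalls at the ``key observation''. The claim that $R_i^{(h)}P_i^{(h-1)}$ reorganises, up to neighbour-indexed correction factors, into a full-period monomial $\prod_{m=0}^{h-1}Y_{i,s+2m}$ is asserted but not proved, and it is not obvious. The quantity $P_i^{(h-1)}$ is a large product of roots governed by inverse Cartan coefficients (Lemma~\ref{lem : Dtilde of initial KRs}), while $R_i^{(h)}$ is a product of $\beta_j^{(h)}$'s for $j\le i$; no mechanism is given for why these two products should collapse to a single sign with the stated exponent. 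Making this precise would essentially force you to rederive the relation~\eqref{eq : relation B for initial KRs} between consecutive $P_i^{(k)}$'s and then reassemble the T-system identity, i.e.\ to reprove Lemma~\ref{lem : T-systems between Q and P} in situ. That lemma is exactly what you are missing: once it is invoked, the explicit form of $R_i^{(h)}$ is never needed, nor is any ``complete-period'' reorganisation.
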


 \begin{proof}
Let $i \in I$. Note that $h+1$ is not equal to $n-1$ modulo $h$ and hence the formula for $\gamma_{i,h}$ is the same whether $i$ is spin or generic. Hence we have that 
$$ \gamma_{i,h} = \sum_{l=1}^{h}  \beta_1^{(l)} = \left( \sum_{l=0}^{h-1} \tau^l \right) (\gamma_1) = 0 $$
as $\tau^h = id$ and it is known that $1$ is not eigenvalue of $\tau$ (cf. \cite[Chapter V, \S6]{BourbakiLieGroupsLieAlgebras4To6}). Hence we get 
$$ Q_i^{(h+1)} = R_i^{(h+1)} P_i^{(h)} = (-1)^{i} P_i^{(h)} \prod_{j=h+1-i}^{h} \gamma_{i,j} = 0 . $$
Consequently, if $i$ is strictly generic then Lemma~\ref{lem : T-systems between Q and P} at the rank $i$ with $k=h$ implies that  
$$ Q_i^{(h)} P_i^{(h)} = Q_{i-1}^{(h)} P_{i+1}^{(h)} .  $$
By Proposition~\ref{prop : prod of Ys equal to pm 1}, this yields
$$ Q_i^{(h)} = (-1)^{d_{i+1}-d_i} Q_{i-1}^{(h)}  $$
and hence a straightforward induction gives 
$$ Q_i^{(h)} = (-1)^{d_{i+1}-d_1}. $$
 Similarly, if $i=n-2$ in type $D_n$, we get 
 $$ Q_i^{(h)}P_i^{(h)} = Q_{i-1}^{(h)}P_{n-1}^{(h)}P_n^{(h)} $$
 and thus using the formula for $i=n-3$ previously obtained we get that 
 $$ Q_{n-2}^{(h)} = (-1)^{d_{n-1}+d_n-d_1} .  $$
 Finally if $i$ is spin, we get 
 $$ Q_i^{(h)} P_i^{(h)} = Q_{n-2}^{(h)}  $$
 which implies using what precedes 
 $$ Q_i^{(h)} = (-1)^{d_{\sigma(i)}-d_1} . $$
Together with Lemma~\ref{lem : formula for KR modules in Kxistar}, we obtain the desired statement. 
 \end{proof}

  \begin{proof}[\textbf{Proof of Proposition~\ref{prop : Dbarstar of KRs in Sigma2}}.]

   Let us sum up what comes out of Lemma~\ref{lem : formula for KR modules in Kxistar}, Corollary~\ref{cor : relation B for non initial KRs} and Corollary~\ref{cor : vanishing property}. Using the fact that the $\barDstar \left( X_{j,\xistar(j)-2h+2}^{(k)} \right)$ satisfy~\eqref{eq : relation B for initial KRs}, we proved that 
   $$ \barDstar \left( X_{j , \xistar(j)-2h+2}^{(h+1)} \right) = 0  $$
   and moreover that the  $\barDstar \left( X_{j , \xistar(j)-2h+4}^{(k)}  \right)$ satisfy analogous relations as~\eqref{eq : relation B for initial KRs} (given by Corollary~\ref{cor : relation B for non initial KRs}). Therefore we can repeat the exact same procedure inductively: using the relations given by Corollary~\ref{cor : relation B for non initial KRs}, mesh relations, and  T-systems  as we did in the proof of Lemma~\ref{lem : formula for KR modules in Kxistar}, we can show that $\barDstar \left( X_{j , \xistar(j)-2h+4}^{(h+1)} \right) = 0$ and deduce from that the values of $\barDstar \left( X_{j , \xistar(j)-2h+6}^{(h)} \right)$ just as what was done to prove Corollary~\ref{cor : vanishing property}. A straightforward induction allows to conclude, which completes the proof of Theorem~\ref{thm : duality between KRs}. 
      
  \end{proof}

\subsection{Duality and T-systems}
\label{sec : duality and T systems}

 Recall from Section~\ref{sec : prelim and statement of duality} the algebra $\mathcal{A}_{\xi}$ as well as the maps $\barD, \barDstar$. Let $x_1, \ldots , x_n$ be formal variables.
 We define a map 
 $$ \mathbf{D} : \mathcal{A}_{\xi} \longrightarrow \mathbb{C}(\alpha_1, \ldots , \alpha_n)(x_1, \ldots , x_n) $$
 as follows. For the classes in $\Sigma_1$  we set 
 $$ \mathbf{D}(X_{j,s}^{(k)}) :=  \barDstar  \left( X_{j , s}^{(k)} \right), \qquad k = (\xistar(j)-s+2)/2 $$
 In order to define the images of $\mathbf{D}$ on the classes of $\Sigma_2$, we need the following piece of notation. For any $j \in I$ and $m \geq 0$, we set 
 $$ z_{j,m} := 
\begin{cases}
    \frac{x_m}{x_{j+m}} & \text{if $\mathfrak{g}$ is of type $A_n$,} \\
    \frac{x_{m+j-n+1}x_m}{x_{n-1}x_n} & \text{if $\mathfrak{g}$ is of type $D_n$ and $j \leq n-2$,} \\
    \frac{x_m}{x_{\sigma^m(j)}} & \text{if $\mathfrak{g}$ is of type $D_n$ and $j \in \{n-1,n\}$,}
    \end{cases}
 $$
where $x_{n+m}=x_{m-1}$ for $m \ge 1$ and $x_0=1$. For $j \in I$, $s, l \in \ZZ$, we define
\begin{align} \label{eq:definition of zeta jsh}
\zeta(j,s,l) := \frac{s-\xistar(j)+2l-2}{2}.
\end{align}
 
 Then we define the images of $\mathbf{D}$ on the classes of $\Sigma_2$ as follows:
 \begin{align} \label{eq:definition of D of Xjsh}
    \mathbf{D}(X_{j,s}^{(h)}) := z_{j,\zeta(j,s,h)}.
 \end{align} 
We begin with a preliminary lemma.

\begin{lemma} \label{lem : boldD not zero}
Let $(i,p)$ such that $\xi(i) \geq p > \xi(i)-2h+2$ and $1 \leq k<h$. Then we have that $[X_{i^{*} , \eta(p)}^{(h-k)}] \in \mathcal{A}_{\xi}$ and moreover we have that 
$$  \mathbf{D} \left( X_{i^{*} , \eta(p)}^{(h-k)} \right) \neq 0   
.  $$
\end{lemma}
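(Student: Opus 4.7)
The plan proceeds in two steps, addressing the algebraic inclusion and the non-vanishing separately.

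\textbf{Step 1: Algebraic inclusion.} First I would verify that with $\xi(i)\ge p>\xi(i)-2h+2$ and $\xistar(i^{*})=-\xi(i)$, $\eta(p)=-p-2h+4$, the position $(i^{*},\eta(p))$ satisfies $\xistar(i^{*})\ge\eta(p)>\xistar(i^{*})-2h+2$. Therefore both the initial KR class at $(i^{*},\eta(p))$ (of degree $h-k'$ with $k':=(\xi(i)-p+2)/2$) and the degree-$h$ class $[X_{i^{*},\eta(p)}^{(h)}]$ lie in $\Sigma_1\sqcup\Sigma_2$, together with their inverses. For the intermediate degree $h-k$ with $1\le k<h$, I would use the T-system exchange relations
$$[X_{j,s}^{(l)}][X_{j,s+2}^{(l)}] = [X_{j,s}^{(l+1)}][X_{j,s+2}^{(l-1)}] + \prod_{j'\sim j}[X_{j',s+1}^{(l)}]$$
to move between degrees $l\pm 1$, inducting on the distance between $h-k$ and the nearest boundary value (either $h-k'$ or $h$). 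This expresses $[X_{i^{*},\eta(p)}^{(h-k)}]$ as a rational function of the generators of $\Sigma$.

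\textbf{Step 2: Non-vanishing.} For the second claim, my plan is to exploit the explicit formulas for $\mathbf{D}$ on the generators together with the algebraic independence of the roots $\alpha_i$ and the formal variables $x_j$ in the ambient field $\mathbb{C}(\alpha_1,\dots,\alpha_n)(x_1,\dots,x_n)$. The images $D_{j,s}$ on $\Sigma_1$ are non-trivial monomials in the positive roots (by Lemma~\ref{lem : Dtilde of initial KRs}), while the $\mathbf{D}$-images on $\Sigma_2$ are the non-zero monomials $z_{j,l}$ in the $x_j$. Once $[X_{i^{*},\eta(p)}^{(h-k)}]$ has been rewritten via Step~1 as a rational function of these generators, applying $\mathbf{D}$ yields a rational expression whose numerator and denominator are polynomials in the $\alpha_i$ and $x_j$. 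To verify that this expression does not vanish identically, I would track its leading term under a bigrading (by root-lattice weight and by total $x$-degree) and compare it with the known non-vanishing $\mathbf{D}$-image of a boundary generator (the initial or degree-$h$ KR at the same position).

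\textbf{Main obstacle.} The principal difficulty is ensuring that the T-system induction in Step~1 always remains within positions where generators of $\Sigma$ are available, and simultaneously guaranteeing that the cancellations produced by unwinding the recursion inside $\mathbf{D}$ do not annihilate the final expression. A cleaner route around both issues would be to establish, in the spirit of Corollary~\ref{cor : relation B for non initial KRs} and Proposition~\ref{prop : Dbarstar of KRs in Sigma2}, that the $\mathbf{D}$-image agrees with $\barDstar([X_{i^{*},\eta(p)}^{(h-k)}])$ computed via the analogous $Q^{*}$-side T-systems starting from the initial and degree-$h$ boundary data. This would reduce the non-vanishing to the already-known non-vanishing of $\barDstar$ on all KR modules accessible from the boundary generators via T-systems, bypassing any need to track cancellations by hand.
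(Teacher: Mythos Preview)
Your Step~1 is on the right track and matches the paper: one unwinds T-systems to reach the target class from the boundary generators in $\Sigma$, and the two parts of the lemma must be proved together since each recursive step requires dividing by a $\mathbf{D}$-value already known to be nonzero.

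Your proposed ``cleaner route'' via $\barDstar$, however, fails for two reasons. First, it is circular in the logical structure of the paper: the identification of the specialization $\mathcal{D}$ of $\mathbf{D}$ with $\barDstar$ on the full family of KR classes is carried out in the proof of Theorem~\ref{thm : duality between KRs} \emph{using} Lemma~\ref{lem : boldD not zero}. Second, and more decisively, it cannot work even in principle: by Lemma~\ref{lem : formula for KR modules in Kxistar} one has $\barDstar(X_{j,\xistar(j)-2m+4}^{(m)}) = (-1)^j R_j^{(m)} P_j^{(m-1)}$, and $R_j^{(m)}=0$ whenever $m \le j$. So $\barDstar$ genuinely vanishes on many KR classes in the relevant range. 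The entire purpose of introducing $\mathbf{D}$ with the extra formal variables $x_1,\dots,x_n$ is precisely to remove these zeros; reducing back to $\barDstar$ discards exactly the mechanism you need.

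Your main Step~2 (tracking a leading term under a bigrading by root-lattice weight and total $x$-degree) is in the right spirit but too coarse as stated: total $x$-degree need not separate the two summands of the T-system. The paper's argument sharpens this to a one-line observation. After applying $\mathbf{D}$ to the T-system
\[
\mathbf{D}(X_{i^{*},\eta(p)}^{(h-k)})\,\mathbf{D}(X_{i^{*},\eta(p-2)}^{(h-k)}) \;=\; \mathbf{D}(X_{i^{*},\eta(p-2)}^{(h-k+1)})\,\mathbf{D}(X_{i^{*},\eta(p)}^{(h-k-1)}) \;+\; \prod_{j^{*}\sim i^{*}} \mathbf{D}(X_{j^{*},\eta(p-1)}^{(h-k)}),
\]
one tracks the degree in a \emph{single} variable $z_{i^{*},l}$ arising from the $\Sigma_2$ generator attached to the column of $(i^{*},\eta(p-2))$. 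The factor $\mathbf{D}(X_{i^{*},\eta(p-2)}^{(h-k+1)})$ has degree~$1$ in this variable while every other factor appearing on either side has degree~$0$. Hence the two summands on the right cannot cancel, the right-hand side is nonzero, and therefore so is $\mathbf{D}(X_{i^{*},\eta(p)}^{(h-k)})$. This is the concrete incarnation of the separation your bigrading was reaching for; what was missing was naming the specific variable.
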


\begin{proof}
By definition ${\bf D}(X_{j,s}^{(h)})=z_{j,\zeta(j,s,h)}$. After applying ${\bf D}$ to the equations on $Q^*$ side (see Figures \ref{fig:AR quivers A3}, \ref{fig:AR quivers D5}) which correspond to the equations in the T-system for the first two copies of $\mathcal{C}_Q$ on $Q$ side, we have that for $k \in [1,h - \frac{\xi(i)-p}{2} - 1]$,
\begin{align} \label{eq:apply bD to T-system on Qstar side which correspond to first two copies of CQ on Q side}
{\bf D}(X_{i^*, \eta(p)}^{(h-k)}) {\bf D}(X_{i^*, \eta(p-2)}^{(h-k)}) = {\bf D}(X_{i^*, \eta(p-2)}^{(h-k+1)}) {\bf D}(X_{i^*, \eta(p)}^{(h-k-1)}) + \prod_{j^*:C_{i^*j^*}=-1} {\bf D}(X_{j^*, \eta(p-1)}^{(h-k)}).  
\end{align}
On the right hand side, ${\bf D}(X_{i^*, \eta(p-2)}^{(h-k+1)})$ has degree $1$ in $z_{i^*, \frac{\xi(i)-p}{2}+1}$, ${\bf D}(X_{i^*, \eta(p)}^{(h-k)})$, ${\bf D}(X_{i^*, \eta(p)}^{(h-k-1)})$ and ${\bf D}(X_{j^*, \eta(p-1)}^{(h-k)})$ have degree $0$ in $z_{i^*, \frac{\xi(i)-p}{2}+1}$. Therefore the right hand side is not $0$. It follows that ${\bf D}(X_{i^*, \eta(p)}^{(h-k)}) \ne 0$.

\end{proof}

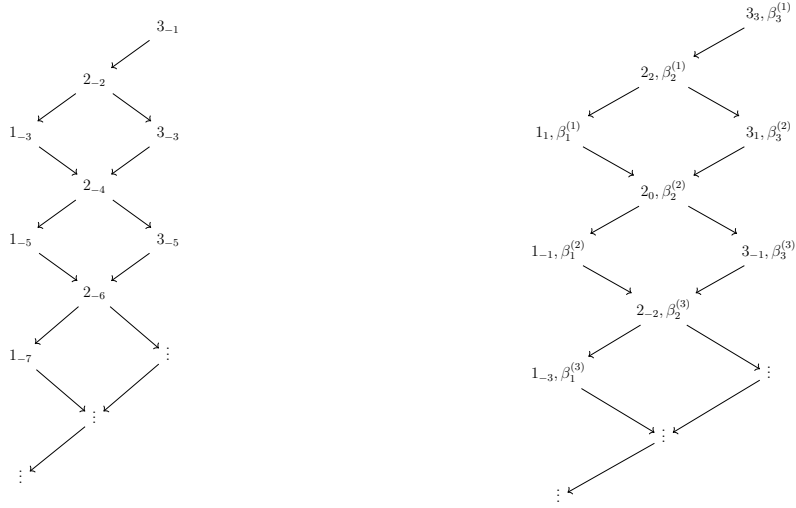
\begin{figure} [t]
     \centering
\begin{minipage}{.45\textwidth}
        \centering
 \adjustbox{scale=0.5}{
 \begin{tikzcd}
	&& {3_{-1}} \\
	& {2_{-2}} \\
	{1_{-3}} && {3_{-3}} \\
	& {2_{-4}} \\
	{1_{-5}} && {3_{-5}} \\
	& {2_{-6}} \\
	{1_{-7}} && \vdots \\
	& \vdots \\
	\vdots
	\arrow[from=1-3, to=2-2]
	\arrow[from=2-2, to=3-1]
	\arrow[from=2-2, to=3-3]
	\arrow[from=3-1, to=4-2]
	\arrow[from=3-3, to=4-2]
	\arrow[from=4-2, to=5-1]
	\arrow[from=4-2, to=5-3]
	\arrow[from=5-1, to=6-2]
	\arrow[from=5-3, to=6-2]
	\arrow[from=6-2, to=7-1]
	\arrow[from=6-2, to=7-3]
	\arrow[from=7-1, to=8-2]
	\arrow[from=7-3, to=8-2]
	\arrow[from=8-2, to=9-1]
\end{tikzcd}
}
\end{minipage}
\begin{minipage}{.45\textwidth}
        \centering
 \adjustbox{scale=0.5}{
\begin{tikzcd}
	&& {3_{3}, \beta_3^{(1)}} \\
	& {2_{2}, \beta_2^{(1)}} \\
	{1_{1}, \beta_1^{(1)}} && {3_{1}, \beta_3^{(2)}} \\
	& {2_{0}, \beta_2^{(2)}} \\
	{1_{-1}, \beta_1^{(2)}} && {3_{-1}, \beta_3^{(3)}} \\
	& {2_{-2}, \beta_2^{(3)}} \\
	{1_{-3}, \beta_1^{(3)}} && \vdots \\
	& \vdots \\
	\vdots
	\arrow[from=1-3, to=2-2]
	\arrow[from=2-2, to=3-1]
	\arrow[from=2-2, to=3-3]
	\arrow[from=3-1, to=4-2]
	\arrow[from=3-3, to=4-2]
	\arrow[from=4-2, to=5-1]
	\arrow[from=4-2, to=5-3]
	\arrow[from=5-1, to=6-2]
	\arrow[from=5-3, to=6-2]
	\arrow[from=6-2, to=7-1]
	\arrow[from=6-2, to=7-3]
	\arrow[from=7-1, to=8-2]
	\arrow[from=7-3, to=8-2]
	\arrow[from=8-2, to=9-1]
\end{tikzcd} }
\end{minipage}
 \caption{Left: Auslander--Reiten quiver for $Q$ of type $A_3$. Right: Auslander--Reiten quiver for $Q^*$ of type $A_3$. We denote $Y_{i,s}$ by $i_s$, $i \in I$, $s \in \ZZ$.}
     \label{fig:AR quivers A3}
 \end{figure}

 \begin{figure} [t]
     \centering
\begin{minipage}{.45\textwidth}
        \centering
 \adjustbox{scale=0.5}{
\begin{tikzcd}
	{1_4} \\
	& {2_3} \\
	{1_2} && {3_2} \\
	& {2_1} && {4_1} && {5_1} \\
	{1_0} && {3_0} \\
	& {2_{-1}} && {4_{-1}} && {5_{-1}} \\
	{1_{-2}} && {3_{-2}} \\
	& {2_{-3}} && {4_{-3}} && {5_{-3}} \\
	{1_{-4}} && {3_{-4}} \\
	& {2_{-5}} && {4_{-5}} && {5_{-5}} \\
	\vdots && {3_{-6}} \\
	& \vdots && {4_{-7}} && {5_{-7}} \\
	&& \vdots \\
	&&& \vdots && \vdots
	\arrow[from=1-1, to=2-2]
	\arrow[from=2-2, to=3-1]
	\arrow[from=2-2, to=3-3]
	\arrow[from=3-1, to=4-2]
	\arrow[from=3-3, to=4-2]
	\arrow[from=3-3, to=4-4]
	\arrow[from=3-3, to=4-6]
	\arrow[from=4-2, to=5-1]
	\arrow[from=4-2, to=5-3]
	\arrow[from=4-4, to=5-3]
	\arrow[from=4-6, to=5-3]
	\arrow[from=5-1, to=6-2]
	\arrow[from=5-3, to=6-2]
	\arrow[from=5-3, to=6-4]
	\arrow[from=5-3, to=6-6]
	\arrow[from=6-2, to=7-1]
	\arrow[from=6-2, to=7-3]
	\arrow[from=6-4, to=7-3]
	\arrow[from=6-6, to=7-3]
	\arrow[from=7-1, to=8-2]
	\arrow[from=7-3, to=8-2]
	\arrow[from=7-3, to=8-4]
	\arrow[from=7-3, to=8-6]
	\arrow[from=8-2, to=9-1]
	\arrow[from=8-2, to=9-3]
	\arrow[from=8-4, to=9-3]
	\arrow[from=8-6, to=9-3]
	\arrow[from=9-1, to=10-2]
	\arrow[from=9-3, to=10-2]
	\arrow[from=9-3, to=10-4]
	\arrow[from=9-3, to=10-6]
	\arrow[from=10-2, to=11-1]
	\arrow[from=10-2, to=11-3]
	\arrow[from=10-4, to=11-3]
	\arrow[from=10-6, to=11-3]
	\arrow[from=11-1, to=12-2]
	\arrow[from=11-3, to=12-2]
	\arrow[from=11-3, to=12-4]
	\arrow[from=11-3, to=12-6]
	\arrow[from=12-2, to=13-3]
	\arrow[from=12-4, to=13-3]
	\arrow[from=12-6, to=13-3]
	\arrow[from=13-3, to=14-4]
	\arrow[from=13-3, to=14-6]
\end{tikzcd}
}
\end{minipage}
\begin{minipage}{.45\textwidth}
        \centering
 \adjustbox{scale=0.5}{
 \begin{tikzcd}
	&&& {4_{-1}, \beta_4^{(1)}} && {5_{-1}, \beta_5^{(1)}} \\
	&& {3_{-2}, \beta_3^{(1)}} \\
	& {2_{-3}, \beta_2^{(1)}} && {4_{-3}, \beta_4^{(2)}} && {5_{-3}, \beta_5^{(2)}} \\
	{1_{-4}, \beta_1^{(1)}} && {3_{-4}, \beta_3^{(2)}} \\
	& {2_{-5}, \beta_2^{(2)}} && {4_{-5}, \beta_4^{(3)}} && {5_{-5}, \beta_5^{(3)}} \\
	{1_{-6}, \beta_1^{(2)}} && {3_{-6}, \beta_3^{(2)}} \\
	& {2_{-7}, \beta_2^{(3)}} && {4_{-7}, \beta_4^{(4)}} && {5_{-7}, \beta_5^{(4)}} \\
	{1_{-8}, \beta_1^{(3)}} && {3_{-8}, \beta_3^{(4)}} \\
	& {2_{-9}, \beta_2^{(4)}} && {4_{-9}, \beta_4^{(5)}} && {5_{-9}, \beta_5^{(5)}} \\
	{1_{-10}, \beta_1^{(4)}} && {3_{-10}, \beta_3^{(5)}} \\
	& {2_{-11}, \beta_2^{(5)}} && \vdots && \vdots \\
	{1_{-12}, \beta_1^{(5)}} && \vdots \\
	& \vdots \\
	\vdots
	\arrow[from=1-4, to=2-3]
	\arrow[from=1-6, to=2-3]
	\arrow[from=2-3, to=3-2]
	\arrow[from=2-3, to=3-4]
	\arrow[from=2-3, to=3-6]
	\arrow[from=3-2, to=4-1]
	\arrow[from=3-2, to=4-3]
	\arrow[from=3-4, to=4-3]
	\arrow[from=3-6, to=4-3]
	\arrow[from=4-1, to=5-2]
	\arrow[from=4-3, to=5-2]
	\arrow[from=4-3, to=5-4]
	\arrow[from=4-3, to=5-6]
	\arrow[from=5-2, to=6-1]
	\arrow[from=5-2, to=6-3]
	\arrow[from=5-4, to=6-3]
	\arrow[from=5-6, to=6-3]
	\arrow[from=6-1, to=7-2]
	\arrow[from=6-3, to=7-2]
	\arrow[from=6-3, to=7-4]
	\arrow[from=6-3, to=7-6]
	\arrow[from=7-2, to=8-1]
	\arrow[from=7-2, to=8-3]
	\arrow[from=7-4, to=8-3]
	\arrow[from=7-6, to=8-3]
	\arrow[from=8-1, to=9-2]
	\arrow[from=8-3, to=9-2]
	\arrow[from=8-3, to=9-4]
	\arrow[from=8-3, to=9-6]
	\arrow[from=9-2, to=10-1]
	\arrow[from=9-2, to=10-3]
	\arrow[from=9-4, to=10-3]
	\arrow[from=9-6, to=10-3]
	\arrow[from=10-1, to=11-2]
	\arrow[from=10-3, to=11-2]
	\arrow[from=10-3, to=11-4]
	\arrow[from=10-3, to=11-6]
	\arrow[from=11-2, to=12-1]
	\arrow[from=11-2, to=12-3]
	\arrow[from=11-4, to=12-3]
	\arrow[from=11-6, to=12-3]
	\arrow[from=12-1, to=13-2]
	\arrow[from=12-3, to=13-2]
	\arrow[from=13-2, to=14-1]
\end{tikzcd}
 }
\end{minipage}
 \caption{Left: Auslander--Reiten for $Q$ of type $D_5$. Right: Auslander--Reiten quiver for $Q^*$ of type $D_5$. We denote $Y_{i,s}$ by $i_s$, $i \in I$, $s \in \ZZ$.}
     \label{fig:AR quivers D5}
 \end{figure}
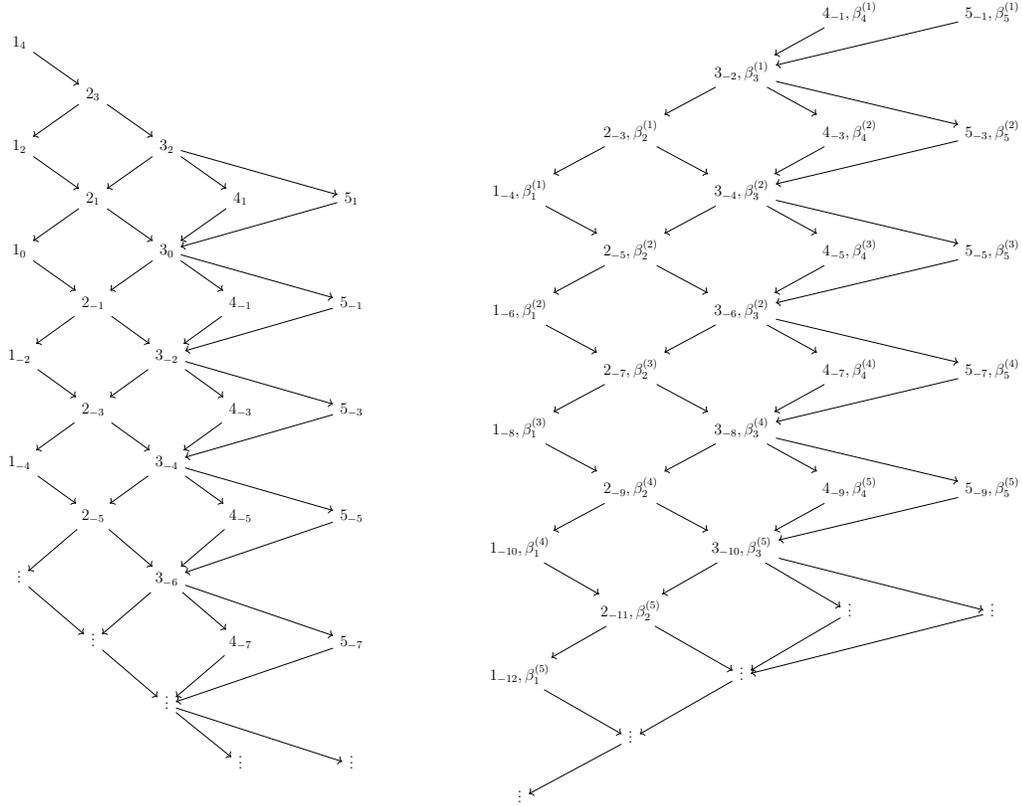

   We now introduce the following renormalizations: for every $(j,s)$ such that $\xistar(j) \geq s > \xistar(j)-2h+2$ and $1 \leq l < h$ we set 
\begin{align} \label{eq:def of Dprimejs}
{D'}_{j,s}^{(l)} := z_{j, \zeta(j,s,l)}^{-1} \mathbf{D} \left( X_{j,s}^{(l)} \right).
\end{align}    
The next Proposition shows that the ${D'}_{j,s}^{(l)}$ satisfy identities dual to the T-systems satisfied by the KR modules $X_{i,p}^{(k)} , 1 \leq k <h, \xi(i) \geq p > \xi(i)-2h+2$ in $K_0(\overline{\CQ})$. 


\begin{proposition} \label{prop : T-systems}
    Let $(j,s)$ such that $\xistar(j) \geq s > \xistar(j)-2h+2$ and let $1 \leq l < h$. Then we have that
\begin{align*}
  {D'}_{j,s-2}^{(l)} {D'}_{j,s}^{(l)} &= {D'}_{j,s-2}^{(l+1)} {D'}_{j,s}^{(l-1)} + \prod_{j' \sim j} {D'}_{j',s-1}^{(l)}, \quad l<h-1, \\
{D'}_{j,s-2}^{(h-1)} {D'}_{j,s}^{(h-1)} &=  {D'}_{j,s}^{(h-2)} + \prod_{j' \sim j} {D'}_{j',s-1}^{(h-1)}.
\end{align*}    
    \end{proposition}
    
\begin{proof}




We first consider the type $A_n$ case, for $j \in [1,n]$, the left hand side of the relation is
\begin{align*}
\frac{x_{j+m-1}}{x_{m-1}} {\bf D}\left( X_{j,s-2}^{(l)} \right) \frac{x_{j+m}}{x_{m}} {\bf D}\left( X_{j,s}^{(l)} \right). 
\end{align*}
Since $\xi^*(j-1) = \xi^*(j)-1$, $\xi^*(j+1) = \xi^*(j)+1$, we have that the right hand side of the relation is 
\begin{align*}
\frac{x_{j+m}}{x_{m}} {\bf D}\left( X_{j,s-2}^{(l+1)} \right) \frac{x_{j+m-1}}{x_{m-1}} {\bf D}\left( X_{j,s}^{(l-1)} \right) + \frac{x_{j+m-1}}{x_{m}} \frac{x_{j+m}}{x_{m-1}} {\bf D}\left( X_{j-1,s-1}^{(l)} \right)  {\bf D}\left( X_{j+1,s-1}^{(l)} \right),
\end{align*}
where we have used that $x_{n+m}=x_{m-1}$ in type $A_n$, and used that in the case of $l=h-1$, ${\bf D}(X_{j,s-2}^{(h)}) = z_{j, \zeta(j,s-2,h)} = z_{j, \zeta(j,s,h-1)} = \frac{x_m}{x_{j+m}}$.  
Therefore in this case, by (\ref{eq:apply bD to T-system on Qstar side which correspond to first two copies of CQ on Q side}), the left hand side of the relation for ${D'}_{j,s}^{l}$ is equal to the right hand side. Hence ${D'}_{j,s}^{(l)}$ satisfies T-system relations.

Now we consider type $D_n$ case. Let $j=n-1$. The left hand side of the relation is 
\begin{align*}
\frac{x_{\sigma^{m-1}(n-1)}}{x_{m-1}} {\bf D}\left( X_{n-1,s-2}^{(l)} \right) \frac{x_{\sigma^{m}(n-1)}}{x_{m}} {\bf D}\left( X_{n-1,s}^{(l)} \right).
\end{align*}
Since $\xi^*(n-2) = \xi^*(n-1)-1$ and $n-2+m \ge n-1$, we have that the right hand side of the relation is
\begin{align*}
\frac{x_{\sigma^m(n-1)}}{x_{m}} {\bf D}\left( X_{n-1,s-2}^{(l+1)} \right) \frac{x_{\sigma^{m-1}(n-1)}}{x_{m-1}} {\bf D}\left( X_{n-1,s}^{(l-1)} \right) + \frac{x_{n-1}x_n}{x_{m-1}x_m} {\bf D}\left( X_{n-2,s-1}^{(l)} \right),
\end{align*}
where we have used that in the case of $l=h-1$, we have that 
\[
{\bf D}\left( X_{n-1,s-2}^{(h)}\right) = z_{n-1, \zeta(n-1,s-2,h)} = z_{n-1, \zeta(n-1, s, h-1)} = z_{n-1, m} = \frac{x_m}{x_{\sigma^m(n-1)}}.  
\]
Therefore in this case, by (\ref{eq:apply bD to T-system on Qstar side which correspond to first two copies of CQ on Q side}), the left hand side of the relation for ${D'}_{j,s}^{l}$ is equal to the right hand side. Hence ${D'}_{j,s}^{(l)}$ satisfies T-system relations. The case of $j=n$ is proved in the same way. 

In type $D_n$, let $j=n-2$. The left hand side of the relation is
\begin{align*}
\frac{x_{n-1}x_n}{x_{m-1}x_{m-2}} {\bf D}\left( X_{n-2,s-2}^{(l)} \right) \frac{x_{n-1}x_n}{x_{m}x_{m-1}} {\bf D}\left( X_{n-2,s}^{(l)} \right). 
\end{align*}
Since $\xi^*(n-3) = \xi^*(n-2)-1$, $\xi^*(n-1) = \xi^*(n-2)+1$, $\xi^*(n) = \xi^*(n-2)+1$, the right hand side of the relation is
\begin{align*}
&  \frac{x_{n-1}x_n}{x_{m}x_{m-1}} {\bf D}\left( X_{n-2,s-2}^{(l+1)} \right) \frac{x_{n-1}x_n}{x_{m-1}x_{m-2}}  {\bf D}\left( X_{n-2,s}^{(l-1)} \right) \\
& +   \frac{x_{n-1}x_n}{x_{m-2}x_m} {\bf D}\left( X_{n-3,s-1}^{(l)} \right) \frac{x_{\sigma^{m-1}(n-1)}}{x_{m-1}} {\bf D}\left( X_{n-1,s-1}^{(l)} \right) \frac{x_{\sigma^{m-1}(n)}}{x_{m-1}} {\bf D}\left( X_{n,s-1}^{(l)} \right),
\end{align*}
where we have used that in the case of $l=h-1$, we have that 
\begin{align*}
{\bf D}\left( X_{n-2,s-2}^{(h)} \right) = z_{n-2,\zeta(n-2,s-2,h)} = z_{n-2,\zeta(n-2,s,h-1)} = z_{n-2, m} = \frac{x_{m-1}x_m}{x_{n-1}x_n}.  
\end{align*}
Therefore in this case, by (\ref{eq:apply bD to T-system on Qstar side which correspond to first two copies of CQ on Q side}), the left hand side of the relation for ${D'}_{j,s}^{l}$ is equal to the right hand side. Hence ${D'}_{j,s}^{(l)}$ satisfies T-system relations.

In type $D_n$, let $j \in [1,n-3]$. The left hand side of the relation is
\begin{align*}
\frac{x_{n-1}x_n}{x_{m-1}x_{m+j-n}} {\bf D}\left( X_{j,s-2}^{(l)} \right) \frac{x_{n-1}x_n}{x_{m}x_{m+j-n+1}} {\bf D}\left( X_{j,s}^{(l)} \right). 
\end{align*}
Since $\xi^*(j-1) = \xi^*(j)-1$, $\xi^*(j+1) = \xi^*(j)+1$, we have that the right hand side of the relation is 
\begin{align*}
\scalemath{0.86}{
\frac{x_{n-1}x_n}{x_{m}x_{m+j-n+1}} {\bf D}\left( X_{j,s-2}^{(l+1)} \right) \frac{x_{n-1}x_n}{x_{m-1}x_{m+j-n}} {\bf D}\left( X_{j,s}^{(l-1)} \right) + \frac{x_{n-1}x_n}{x_{m}x_{m+j-n+1}}  {\bf D}\left( X_{j-1,s-1}^{(l)} \right) \frac{x_{n-1}x_n}{x_{m-1}x_{m+j-n}} {\bf D}\left( X_{j+1,s-1}^{(l)} \right),}
\end{align*}
where we have used that in the case of $l=h-1$, we have that 
\begin{align*}
{\bf D}\left( X_{j,s-2}^{(h)} \right) = z_{j,\zeta(j,s-2,h)} = z_{j,\zeta(j,s,h-1)} = z_{j,m} = \frac{x_{m+j-n+1}x_m}{x_{n-1}x_n}.
\end{align*}
This concludes the proof of the Proposition.
\end{proof}


\begin{proof}[ \textbf{Proof of Theorem~\ref{thm : duality between KRs}}.]

Let $(i,p)$ such that $\xi(i) \geq p > \xi(i)-2h+2$ and $1 \leq k<h$. By \cite[Theorem 3.1]{HL16}, the class of the KR module $X_{i,p}^{(k)}$ is a Laurent polynomial in the classes $X_{k,t} , \xi(k) \geq t > \xi(k)-2h$ (recall the notation $X_{i,p}$ from~\eqref{eq : initial KRs}). Thus we can write 
$$ [X_{i,p}^{(k)}] = \sum_{ \underline{u}=(u_{j,t})} a_{\underline{u}} \prod_{j,t} [X_{j,t}]^{u_{j,t}}  $$
where $u_{j,t}$ are integers. On the other hand, if we denote by 
$$\Omega : K_0(\barCQ) \longrightarrow \mathbb{C}(\alpha_1, \ldots , \alpha_n)(x_1, \ldots , x_n) $$
the algebra homomorphism  determined by $\Omega : [X_{i,p}] \longmapsto D'_{i^{*} , \eta(p)}$,
then Proposition 4.12 (together with a straightforward induction) implies that 
$$ \Omega \left( [X_{i,p}^{(k)}] \right) = {D'}_{i^{*} , \eta(p)}^{(h-k)} $$
for all $1 \leq k < h$. In other words, we have 
$$ {D'}_{i^{*} , \eta(p)}^{(h-k)} = \sum_{ \underline{u}=(u_{j,t})} a_{\underline{u}} \prod_{j,t}  \left( {D'}_{j^{*},\eta(t)} \right)^{u_{j,t}}.  $$
 Thus ${D'}_{i^{*} , \eta(p)}^{(h-k)}$ can be expressed as a Laurent polynomial in the images under $\boldsymbol{D}$ of the classes of $\Sigma$, and this Laurent polynomial is the same as the Laurent expansion of $[X_{i,p}^{(k)}]$ in terms of the $[X_{j,t}]$ in $K_0(\overline{\CQ})$. Now, performing the specialization $x_i := (-1)^{d_i}$ for each $i \in I$, we get that if $X_{j^{*}, \eta(t)}$ is a class in $\Sigma_1$ then
 $${D'}_{j^{*},\eta(t)}|_{x_i = (-1)^{d_i}} = (-1)^{d_i}  \barDstar \left({X}_{j^{*},\eta(t)} \right) = \barD \left( X_{j,t} \right)  $$
 by Proposition~\ref{prop : duality for initial KRs}, whereas if $X_{j^{*}, \eta(t)}$ is a class in $\Sigma_2$ then 
 $${D'}_{j^{*},\eta(t)}  = 1 .  $$ 
 Thus we get that 
 $${D'}_{i^{*} , \eta(p)}^{(h-k)}|_{x_i = (-1)^{d_i}} =\sum_{ \underline{u}=(u_{j,t})} a_{\underline{u}} \prod_{j,t} \barD \left( X_{j,t} \right)^{u_{j,t}} = \barD \left( X_{i,p}^{(k)} \right) .  $$
 We now show that the specialization of ${D'}_{i^{*} , \eta(p)}^{(h-k)}$ at $x_i := (-1)^{d_i}$ coincides with $\epsilon_{i,p}^{(k)} \barDstar \left( X_{i^{*} , \eta(p)}^{(h-k)} \right)$.
 By Lemma 4.11, ${\bf D}  \left( X_{i^{*} , \eta(p)}^{(h-k)} \right)$ can be written as a Laurent polynomial in the KR classes in $\Sigma$. By definition, we have that $ {\bf D}([X]) = \barDstar([X])$ if $[X]$ is a KR class in $\Sigma_1$. For KR classes in $\Sigma_2$, we begin by noting that 
 $$ \zeta(j, s,l) = \frac{ s+ \xi(j^{*})+2l-2}{2}  = \frac{\xi(j^{*})-\eta(s)-2(h-l)+2}{2} $$
 which implies that
\begin{align} \label{eq : specializing z}
     z_{j,s, \zeta(j,s,l)}|_{x_i = (-1)^{d_i}}  = \epsilon_{j^{*} , \eta(s)}^{(h-l)}  
\end{align}
      for  every $(j,s)$ such that $\xistar(j) \geq s > \xistar(j)-2h+2$ and $1 \leq l \leq  h$. Then, using this fact with $l=h$, Proposition 4.4  implies that 
 $$ \barDstar \left( X_{j,s}^{(h)} \right) = \epsilon_{j^{*},\eta(s)}^{(0)} = z_{j, \zeta(j,s,h)} |_{x_i := (-1)^{d_i} , i \in I} = {\bf D} \left( X_{j,s}^{(h)} \right)|_{x_i := (-1)^{d_i}} $$
 for every $(j,s)$ such that $\xistar(j) \geq s > \xistar(j)-2h+2$. Therefore we obtain 
 $$ {\bf D}  \left( X_{i^{*} , \eta(p)}^{(h-k)} \right)|_{x_i := (-1)^{d_i}} = \barDstar \left( X_{i^{*} , \eta(p)}^{(h-k)} \right) . $$
 Now performing the specialization $x_i := (-1)^{d_i}$  in~\eqref{eq:def of Dprimejs}, using~\eqref{eq : specializing z} with $j=i^{*}, s = \eta(p)$ and $l=h-k$ we get 
 $${D'}_{i^{*} , \eta(p)}^{(h-k)}|_{x_i = (-1)^{d_i}} =\epsilon_{i,p}^{(k)} {\bf D}  \left( X_{i^{*} , \eta(p)}^{(h-k)} \right)|_{x_i := (-1)^{d_i}} =\epsilon_{i,p}^{(k)} \barDstar  \left( X_{i^{*} , \eta(p)}^{(h-k)} \right)  $$
 Putting everything together, we obtain 
 $$\barD \left( X_{i,p}^{(k)} \right) = {D'}_{i^{*} , \eta(p)}^{(h-k)}|_{x_i = (-1)^{d_i}} = \epsilon_{i,p}^{(k)} \barDstar  \left( X_{i^{*} , \eta(p)}^{(h-k)} \right) . $$



\end{proof}

\section{Proof of the main result}
\label{sec : proof of the main result}

In this section, we prove the main result Theorem \ref{thm : main thm intro}.

\subsection{Examples}
Before presenting the general proof, we provide several examples illustrating that $\tdxi \left( \chi_q(M) \right) = 0$ is highly non-trivial. In type $A_3$, take an orientation $1 \to 2 \to 3$ and a height function $\xi(1)=3$, $\xi(2)=2$, $\xi(3)=1$. Then $\tau=s_1s_2s_3$ and 
\begin{align*}
& \td_{\xi}(Y_{1,-1}) =
\frac{\alpha_2 (\alpha_1 + \alpha_2)}
     {\alpha_3 (\alpha_2 + \alpha_3) (\alpha_1 + \alpha_2 + \alpha_3)}, \quad 
\td_{\xi}(Y_{1,-3}) =
- \alpha_3 (\alpha_2 + \alpha_3) (\alpha_1 + \alpha_2 + \alpha_3), \\ 
& \td_{\xi}(Y_{2,0}) =
\frac{\alpha_1}
     {\alpha_2 (\alpha_2 + \alpha_3) (\alpha_1 + \alpha_2 + \alpha_3)}, \quad 
\td_{\xi}(Y_{2,-2}) =
\frac{\alpha_2 (\alpha_1 + \alpha_2) (\alpha_1 + \alpha_2 + \alpha_3)}
     {\alpha_3}, \\ 
& \td_{\xi}(Y_{2,-4}) =
\alpha_3 (\alpha_2 + \alpha_3), \quad 
\td_{\xi}(Y_{3,-1}) =
- \frac{\alpha_1 (\alpha_1 + \alpha_2) (\alpha_1 + \alpha_2 + \alpha_3)}
        {\alpha_2 (\alpha_2 + \alpha_3)}, \\ 
& \td_{\xi}(Y_{3,-3}) =
- \frac{\alpha_2 (\alpha_2 + \alpha_3)}
        {\alpha_3}.
\end{align*}
The $q$-character of $L(Y_{2,-4})$ is
\begin{align*}
\chi_q(L(Y_{2,-4})) = Y_{2,-4}
+ \frac{Y_{1,-3} Y_{3,-3}}{Y_{2,-2}}
+ \frac{Y_{3,-3}}{Y_{1,-1}}
+ \frac{Y_{1,-3}}{Y_{3,-1}}
+ \frac{Y_{2,-2}}{Y_{1,-1} Y_{3,-1}}
+ \frac{1}{Y_{2,0}}. 
\end{align*}
We check directly that $\td_{\xi}(\chi_q(L(Y_{2,-4})))=0$. 
In type $D_4$, take an orientation \tikz[baseline=-0.5ex,scale=0.8]{
\node (1) at (0,0) {$1$};
\node (2) at (0.8,0) {$2$};
\node (3) at (1.6,0.4) {$3$};
\node (4) at (1.6,-0.4) {$4$};
\draw[->] (1) -- (2);
\draw[->] (2) -- (3);
\draw[->] (2) -- (4);
} and a height function $\xi(1)=-1$, $\xi(2)=-2$, $\xi(3)=\xi(4)=-3$. Then $\tau=s_1s_2s_3s_4$ and 
\begin{align*}
\td_{\xi}(Y_{1,-3}) &=
\frac{\alpha_1}{\alpha_2 (\alpha_1 + \alpha_2)}, \\
\td_{\xi}(Y_{1,-5}) &=
\frac{\alpha_2}
     {\alpha_1 (\alpha_1 + \alpha_2 + \alpha_4) (\alpha_1 + \alpha_2 + \alpha_3)
      (\alpha_1 + \alpha_2 + \alpha_3 + \alpha_4) (\alpha_1 + 2\alpha_2 + \alpha_3 + \alpha_4)}, \\
\td_{\xi}(Y_{1,-7}) &=
-\,\frac{
\alpha_1 (\alpha_1 + \alpha_2) (\alpha_1 + \alpha_2 + \alpha_4) (\alpha_1 + \alpha_2 + \alpha_3)
(\alpha_1 + \alpha_2 + \alpha_3 + \alpha_4)
}{
\alpha_2 (\alpha_2 + \alpha_4) (\alpha_2 + \alpha_3) (\alpha_2 + \alpha_3 + \alpha_4)
}, 
\end{align*}
\begin{align*}
\td_{\xi}(Y_{2,-2}) &=
\frac{1}{\alpha_1 (\alpha_1 + \alpha_2)}, \\
\td_{\xi}(Y_{2,-4}) &=
\frac{1}{
\alpha_2 (\alpha_1 + \alpha_2) (\alpha_1 + \alpha_2 + \alpha_4)
(\alpha_1 + \alpha_2 + \alpha_3) (\alpha_1 + 2\alpha_2 + \alpha_3 + \alpha_4)
}, \\
\td_{\xi}(Y_{2,-6}) &=
\frac{
\alpha_1 (\alpha_1 + \alpha_2)
}{
(\alpha_2 + \alpha_4) (\alpha_2 + \alpha_3) (\alpha_2 + \alpha_3 + \alpha_4)
(\alpha_1 + \alpha_2 + \alpha_3 + \alpha_4)
(\alpha_1 + 2\alpha_2 + \alpha_3 + \alpha_4)
}, \\
\td_{\xi}(Y_{2,-8}) &=
\frac{
\alpha_2 (\alpha_1 + \alpha_2) (\alpha_1 + \alpha_2 + \alpha_4)
(\alpha_1 + \alpha_2 + \alpha_3) (\alpha_1 + 2\alpha_2 + \alpha_3 + \alpha_4)
}{
\alpha_4 \alpha_3 (\alpha_2 + \alpha_3 + \alpha_4)
}, 
\end{align*}
\begin{align*}
\td_{\xi}(Y_{3,-3}) &=
\frac{1}{
\alpha_1 (\alpha_1 + \alpha_2) (\alpha_1 + \alpha_2 + \alpha_3)
}, \\
\td_{\xi}(Y_{3,-5}) &=
\frac{
\alpha_1 (\alpha_1 + \alpha_2 + \alpha_3)
}{
\alpha_2 (\alpha_2 + \alpha_4) (\alpha_1 + \alpha_2 + \alpha_4)
(\alpha_1 + 2\alpha_2 + \alpha_3 + \alpha_4)
}, \\
\td_{\xi}(Y_{3,-7}) &=
\frac{
\alpha_2 (\alpha_2 + \alpha_4) (\alpha_1 + \alpha_2)
(\alpha_1 + \alpha_2 + \alpha_4)
}{
\alpha_3 (\alpha_2 + \alpha_3) (\alpha_2 + \alpha_3 + \alpha_4)
(\alpha_1 + \alpha_2 + \alpha_3)
(\alpha_1 + \alpha_2 + \alpha_3 + \alpha_4)
}, 
\end{align*}
\begin{align*}
\td_{\xi}(Y_{4,-3}) &=
\frac{1}{
\alpha_1 (\alpha_1 + \alpha_2) (\alpha_1 + \alpha_2 + \alpha_4)
}, \\
\td_{\xi}(Y_{4,-5}) &=
\frac{
\alpha_1 (\alpha_1 + \alpha_2 + \alpha_4)
}{
\alpha_2 (\alpha_2 + \alpha_3) (\alpha_1 + \alpha_2 + \alpha_3)
(\alpha_1 + 2\alpha_2 + \alpha_3 + \alpha_4)
}, \\
\td_{\xi}(Y_{4,-7}) &=
\frac{
\alpha_2 (\alpha_2 + \alpha_3) (\alpha_1 + \alpha_2)
(\alpha_1 + \alpha_2 + \alpha_3)
}{
\alpha_4 (\alpha_2 + \alpha_4) (\alpha_2 + \alpha_3 + \alpha_4)
(\alpha_1 + \alpha_2 + \alpha_4)
(\alpha_1 + \alpha_2 + \alpha_3 + \alpha_4)
}.
\end{align*}
The $q$-character of $L(Y_{2,-8})$ is 
\begin{align*}
& \chi_q(L(Y_{2,-8})) = Y_{2,-8} + \frac{Y_{1,-7} Y_{3,-7} Y_{4,-7}}{Y_{2,-6}}
+ \frac{Y_{3,-7} Y_{4,-7}}{Y_{1,-5}}
+ \frac{Y_{1,-7} Y_{4,-7}}{Y_{3,-5}}
+ \frac{Y_{1,-7} Y_{3,-7}}{Y_{4,-5}} \\
& + \frac{Y_{2,-6} Y_{4,-7}}{Y_{1,-5} Y_{3,-5}}
 + \frac{Y_{2,-6} Y_{3,-7}}{Y_{1,-5} Y_{4,-5}}
+ \frac{Y_{1,-7} Y_{2,-6}}{Y_{3,-5} Y_{4,-5}} + \frac{Y_{4,-7} Y_{4,-5}}{Y_{2,-4}}
+ \frac{Y_{2,-6}^2}{Y_{1,-5} Y_{3,-5} Y_{4,-5}} + \frac{Y_{3,-7} Y_{3,-5}}{Y_{2,-4}}
\\
& + \frac{Y_{1,-7} Y_{1,-5}}{Y_{2,-4}}+ \frac{Y_{4,-7}}{Y_{4,-3}}
+ \frac{Y_{2,-6}}{Y_{4,-5} Y_{4,-3}}
+ \frac{Y_{3,-7}}{Y_{3,-3}}
+ \frac{Y_{2,-6}}{Y_{3,-5} Y_{3,-3}} + 2 \frac{Y_{2,-6}}{Y_{2,-4}}
+ \frac{Y_{1,-5} Y_{3,-5} Y_{4,-5}}{Y_{2,-4}^2}
\\
& + \frac{Y_{1,-7}}{Y_{1,-3}}
+ \frac{Y_{2,-6}}{Y_{1,-5} Y_{1,-3}} + \frac{Y_{3,-5} Y_{4,-5}}{Y_{1,-3} Y_{2,-4}}
+ \frac{Y_{1,-5} Y_{4,-5}}{Y_{2,-4} Y_{3,-3}}
+ \frac{Y_{1,-5} Y_{3,-5}}{Y_{2,-4} Y_{4,-3}} + \frac{Y_{4,-5}}{Y_{1,-3} Y_{3,-3}}
\\
& + \frac{Y_{3,-5}}{Y_{1,-3} Y_{4,-3}}
+ \frac{Y_{1,-5}}{Y_{3,-3} Y_{4,-3}} + \frac{Y_{2,-4}}{Y_{1,-3} Y_{3,-3} Y_{4,-3}}
+ \frac{1}{Y_{2,-2}}.
\end{align*}
We check directly that $\td_{\xi}(\chi_q(L(Y_{2,-8})))=0$.

\subsection{Proof of the main result in the case \texorpdfstring{$Q = {Q_0^{op}}^{*}$}{Q = (Q0op)*}}

In this subsection, we prove that Theorem~\ref{thm : main thm intro} holds when $Q = Q_0$ is the monotonic orientation considered in Section~\ref{sec : duality}. For simplicity, we write $\td$ for $\td_{\xi}$ where $\xi$ is a height function adapted to $Q$.  We begin by showing that 
  $$ \td \left( \chi_q \left( L(Y_{i, \xi(i)-2h+2}) \right) \right) = 0  $$
  for every $i \in I$. By Theorem~\ref{thm : duality between KRs}, we have that 
  \begin{equation} \label{eq : duality in the proof of main result}
      \barD \left(  L(Y_{i, \xi(i)-2h+2}) \right) = \pm \barDstar \left( X_{j,\xistar(j)+2}^{(h-1)}  \right)
  \end{equation}
  where $j := i^{*}$. The class involved on the right hand side can be decomposed as a linear combination of classes of standard modules in $\CZ$, all of which are non-trivial (i.e. not isomorphic to the trivial representation). Indeed, recall from Section~\ref{sec : HL} that $\chi_q \left( X_{j,\xistar(j)+2}^{(h-1)} \right)$ is homogeneous with respect to the grading~\eqref{eq : grading}. Its degree is thus given by the dominant monomial, which here gives 
  $$ \beta_{j,\xistar(j)+2} + \cdots + \beta_{j, \xistar(j)+2h-2} =  \left( \sum_{k=0}^{h-2} \tau^k \right) (\beta_{j,\xistar(j)+2})   $$
  which is non zero because the order of $\tau$ is $h$. Hence all the standard modules involved in the decomposition of $X_{j,\xistar(j)+2}^{(h-1)}$ have $q$-characters of non-zero degree with respect to~\eqref{eq : grading} and hence are not isomorphic to the trivial representation.  Furthermore, each of these standard modules is a tensor product of fundamental representations at least one of which is of the form  $L(Y_{k,r})$ with $r > \xistar(k)$. 
  In view of the definition of $\barDstar$, it follows that the right hand side of~\eqref{eq : duality in the proof of main result} is zero.
  
   On the other hand, the truncated $q$-character of $L(Y_{i, \xi(i)-2h+2})$ coincides with its entire $q$-character. Indeed, the lowest $l$-weight monomial is $Y_{i^{*},p}$ where $p = \xi(i)-h+2 < \xi(i)$ and thus all the terms of the $q$-character of $L(Y_{i, \xi(i)-2h+2})$ belong to $\YxiZ$ (cf. Section~\ref{sec : HL}). Hence the left hand side of~\eqref{eq : duality in the proof of main result} is actually $\td \left( \chi_q  \left( L(Y_{i, \xi(i)-2h+2}) \right) \right)$ which proves the desired result in the case of the fundamental representations $L(Y_{i, \xi(i)-2h+2}) , i \in I$. We claim this implies that the same statement holds for all fundamental representations in $\CZ$. Indeed, recall from Section~\ref{sec : HL} that for every $(i,p) \in \IZ$, $Y_{i,p}^{-1} \chi_q \left( L(Y_{i,p}) \right)$ is  a polynomial (with constant term $1$) in the $A_{j,s}^{-1} , (j,s-1) \in \IZ$ such that, given $p,p' \in i + 2 \mathbb{Z}$, $Y_{i,p'}^{-1} \chi_q \left( L(Y_{i,p'}) \right)$ is obtained from $Y_{i,p}^{-1} \chi_q \left( L(Y_{i,p}) \right)$ by performing the substitution $A_{j,s} := A_{j,s+p'-p}$ for all $(j,s)$. Consequently, Corollary~\ref{coro : Dtilde of A inverse} implies that 
  $$ \td \left(Y_{i,p}^{-1} \chi_q \left( L(Y_{i,p}) \right) \right) =\td \left(Y_{i,p'}^{-1} \chi_q \left( L(Y_{i,p'}) \right) \right) |_{\alpha_k := \tau^{ \frac{p'-p}{2}}(\alpha_k)}. $$
  As $\tau$ acts as an automorphism of $\mathfrak{t}^{*}$, this implies that if $\td \left(  \chi_q \left( L(Y_{i,p}) \right) \right)=0$ for one particular $p \in i + 2 \mathbb{Z}$, then the same will hold for all $p' \in i + 2 \mathbb{Z}$. This concludes the proof.

 \subsection{Arbitrary orientation}

 In this section we prove that if the statement 
 $$ \forall j,s \quad \tdxi(\chi_q(L(Y_{j,s})))=0 $$
  holds for a height function $\xi$ adapted to an orientation $Q$, then  it also holds for any height function adapted to any arbitrary orientation. Recall that for a quiver $Q$ and a vertex $i$ of $Q$, we denote by $\gamma_i^{Q}$ the dimension vector of the indecomposable injective $\mathbb{C}Q$-module at $i$; furthermore, we let $\mathbf{s}_iQ$ denote the quiver  defined by reversing all the arrows adjacent to $i$ in $Q$.  Given that any Dynkin quiver can be obtained from a given one by performing a certain sequence of transformations $\mathbf{s}_i$ where $i$ is a source of the quiver obtained at each step, it is enough to prove that if the statement holds for an orientation $Q$, then it holds for all orientations $\mathbf{s}_iQ$ such that $i$ is a source of $Q$. This is achieved by establishing the following:

   \begin{proposition}
  Let $Q$ be an arbitrary orientation of the Dynkin diagram of $\mathfrak{g}$ and let $i$ be a source of $Q$.  Then denoting by $s_i$ the simple reflections of the Weyl group of $\mathfrak{g}$, we have 
  $$ \forall j,s \quad  \td_{\mathbf{s}_i Q}(Y_{j,s})(\alpha_1, \ldots , \alpha_n) = (-\alpha_i)^{-(\beta_{j,s},\alpha_i)} \td_{Q}(Y_{j,s})(s_i \alpha_1, \ldots, s_i \alpha_n) . $$

   \end{proposition}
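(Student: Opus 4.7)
The plan is to compare $\td_Q(Y_{j,s})$ and $\td_{s_iQ}(Y_{j,s})$ as products over admissible positions. First choose $\xi'$ adapted to $s_iQ$ by setting $\xi'(k)=\xi(k)$ for $k\neq i$ and $\xi'(i)=\xi(i)-2$. With this choice the two admissible domains differ by exactly one position, namely $(i,\xi(i))$, at which $\beta_{i,\xi(i)}^Q=\dim I_i^Q=\alpha_i$ since $i$ is a source of $Q$.

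The key structural input is the covariance identity $\beta_{k,r}^{s_iQ}=s_i(\beta_{k,r}^Q)$ for every $(k,r)\in\IZ$ with $r\leq\xi'(k)$. I would establish this in two stages. At the initial positions $r=\xi'(k)$: for $k\neq i$ a direct support analysis of the injectives yields $\dim I_k^{s_iQ}=\dim I_k^Q-[i\rightsquigarrow k]\,\alpha_i$, which equals $s_i(\dim I_k^Q)$ because in a Dynkin tree with $i$ a source one has $(\dim I_k^Q,\alpha_i)=[i\rightsquigarrow k]$; for $k=i$ the formula $\dim I_i^{s_iQ}=(1-|N(i)|)\alpha_i+\sum_{l\sim i}\dim I_l^Q$ agrees with $s_i(\tau_Q\alpha_i)=s_i(\beta_{i,\xi'(i)}^Q)$. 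The identity then propagates to all smaller $r$ by induction, using the defining recursion $\beta_{k,r-2}=\tau(\beta_{k,r})$ together with $\tau_{s_iQ}=s_i\tau_Q s_i$, which is immediate from the bookkeeping of reduced expressions when $i$ is a source of $Q$.

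Granting the covariance, the factorization
\[
\td_Q(Y_{j,s}) \;=\; \alpha_i^{e_{j,i}(\xi(i)-s)}\prod_{(k,r)\in\IZ,\,r\leq\xi'(k)}(\beta_{k,r}^Q)^{e_{j,k}(r-s)},\qquad e_{j,k}(m):=\tilde C_{j,k}(m-1)-\tilde C_{j,k}(m+1),
\]
combined with a termwise application of $s_i$, yields $s_i(\td_Q(Y_{j,s}))=(-\alpha_i)^{e_{j,i}(\xi(i)-s)}\,\td_{s_iQ}(Y_{j,s})$. To identify this exponent with $\pm(\beta_{j,s},\alpha_i)$, I would use Proposition~\ref{prop : reminders on Ctilde} to rewrite $\tilde C_{j,i}(\xi(i)-s+1)=\langle\beta_{j,s},\alpha_i\rangle_Q$ and $\tilde C_{j,i}(\xi(i)-s-1)=\langle\beta_{j,s},\tau_Q\alpha_i\rangle_Q$, and then apply the Auslander-Reiten relation $\langle\beta,\tau_Q\alpha\rangle_Q=-\langle\alpha,\beta\rangle_Q$ to collapse their difference into $-(\beta_{j,s},\alpha_i)$. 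For $(j,s)$ outside the common admissible range, one reduces via the $2h$-periodicity $\td(Y_{i,p})=\td(Y_{i,p-2h})$ of Corollary~\ref{coro : periodicity of Ys}, observing that $\beta_{j,s}$ is periodic in $s$, so the correction factor is periodic as well.

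The principal obstacle is the covariance identity above: reflecting at a source changes many injectives simultaneously, so aligning the AR data of $Q$ and $s_iQ$—in particular at vertex $i$, whose injective in $s_iQ$ is built from the neighboring injectives $I_l^Q$ rather than being the simple $S_i$—requires the explicit bookkeeping sketched here rather than a blackbox appeal to BGP reflection functors.
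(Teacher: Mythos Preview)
Your approach is essentially identical to the paper's: both choose the height function $\xi'=\xi-2\delta_{i,\cdot}$ adapted to $s_iQ$, invoke the covariance $\beta_{k,r}^{s_iQ}=s_i(\beta_{k,r}^Q)$, observe that the two index sets differ only at the single position $(i,\xi(i))$ where $\beta_{i,\xi(i)}^Q=\alpha_i$, identify the resulting exponent with $-(\beta_{j,s},\alpha_i)$, and then extend to all $(j,s)$ by the $2h$-periodicity of Corollary~\ref{coro : periodicity of Ys}. The only difference is one of detail: the paper simply records the covariance identity and the exponent computation as classical facts, whereas you supply the explicit bookkeeping (the injective dimension-vector formulas, $\tau_{s_iQ}=s_i\tau_Q s_i$, and the Auslander--Reiten relation for the Euler form) that justifies them.
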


 \begin{proof}
 The proof is based on the following classical facts. Fix a height function $\xi$ adapted to $Q$. Then $\mathbf{s}_i \xi := \xi - 2 \delta_{i,j}$ is a height function adapted to $\mathbf{s}_iQ$, and moreover the following hold:
  \begin{itemize}
      \item For all $j,s$ we have that $\beta_{j,s}^{\mathbf{s}_i \xi} = s_i(\beta_{j,s}^{\xi})$.
      \item Denoting by $\varphi_{\xi}$ (resp. $\varphi_{\mathbf{s}_i\xi}$) the bijection $I \times \mathbb{Z} \longrightarrow \Delta \times \mathbb{Z}$ induced by $\xi$ (resp. $\mathbf{s}_i \xi$), we have that 
      $$ \forall j \neq i \quad \gamma_j^Q = \gamma_j^{\mathbf{s}_i Q} \quad \text{and} \quad  \varphi_{\xi}^{-1}(\gamma_j^Q,0) = \varphi_{\mathbf{s}_i \xi}^{-1}(\gamma_j^{\mathbf{s}_i Q},0) = (j, \xi(j)) $$
      $$ \varphi_{\mathbf{s}_i \xi}^{-1}(\gamma_i^{\mathbf{s}_i Q},0) = (i, \xi(i)-2) \qquad \varphi_{\mathbf{s}_i \xi}^{-1}(\alpha_i,0) = (i^{*}, \xi(i)-h) . $$
  \end{itemize}
  In other words, applying the elementary transformation $s_iQ$ has little impact on the Auslander-Reiten combinatorics of $Q$: the only place where there are changes are at the vertices corresponding to the respective indecomposable injectives at $i$ for $Q$ (which is $\alpha_i$ given that $i$ is a source in $Q$) and $s_iQ$. 
 This implies that if $j,s$ is such that $s \leq (s_i\xi)(j)$, then one has 
 \begin{align*}
 \td_{\mathbf{s}_i \xi}(Y_{j,s}) &= \prod_{t \leq (s_i\xi)(k)} (s_i(\beta_{k,t}))^{\widetilde{C}_{j,k}(t-s+1) - \widetilde{C}_{j,k}(t-s-1)} \\
 &= (- \alpha_i)^{\widetilde{C}_{j,i}(\xi(i)-s-1) - \widetilde{C}_{j,i}(\xi(i)-s+1)} \prod_{t \leq \xi(k)} (s_i(\beta_{k,t}))^{\widetilde{C}_{j,k}(t-s+1) - \widetilde{C}_{j,k}(t-s-1)} \\
 &=  (-\alpha_i)^{-(\beta_{j,s},\alpha_i)} \td_{Q}(Y_{j,s})(s_i \alpha_1, \ldots, s_i \alpha_n) .
 \end{align*}
 By periodicity of $\td_{\xi}$ and $\td_{\mathbf{s}_i \xi}$, this can be extended to all $j,s$. Thus the proposition is proved.
 \end{proof}

 We also get a simple relationship for the variables $A_{i,p}$, namely
  $$ \td_{\mathbf{s}_i Q}(A_{j,s})(\alpha_1, \ldots , \alpha_n) =  \td_{Q}(A_{j,s})(s_i \alpha_1, \ldots, s_i \alpha_n) . $$
 In particular, as the $q$-character of any fundamental representation can be written as $Y_{j,s}$ times a polynomial in the variables $A_{i,p}^{-1}$, we get that 
 $$ \forall j,s \quad  \td_{\mathbf{s}_i Q}(\chi_q(L(Y_{j,s})))(\alpha_1, \ldots , \alpha_n) = (-\alpha_i)^{-(\beta_{j,s},\alpha_i)} \td_{Q}(\chi_q(L(Y_{j,s})))(s_i \alpha_1, \ldots, s_i \alpha_n). $$
 As the linear transformation $s_i$ is invertible,  we conclude that 
 $$  \td_{\xi}(\chi_q(L(Y_{j,s})))=0 \quad   \Rightarrow \quad  \td_{\mathbf{s}_i \xi}(\chi_q(L(Y_{j,s})))=0   $$
 for all $j,s$.

\subsection{Exceptional types}

In type $E_6, E_7, E_8$, we start from the initial seed and assign $\td$ to the cluster variables and frozen variables in the seed. We apply mutation sequences which produce Kirillov--Reshetikhin modules (in particular, fundamental modules) to the initial seed. We have checked that $\td$ of full $q$-characters of fundamental modules are all zero. This implies that for any non-trivial standard module $M$ in $\CZ$ of type $E_6, E_7, E_8$, we have that $\tdxi \left( \chi_q(M) \right) = 0$. The SageMath codes for these computations could be found in: 

\url{https://github.com/lijr07/D-tilde-of-fundamental-modules-in-type-E}. 

\bibliographystyle{alpha}

\end{document}